\newtheorem{lemma}{Lemma} 
\newtheorem{corollary}{Corollary}
\newtheorem{theorem}{Theorem}
\newtheorem{proposition}{Proposition}
\theoremstyle{remark}
\newtheorem{remark}{Remark}
\theoremstyle{definition}
\newtheorem{definition}{Definition}
\newtheorem{algorithm}{Algorithm}
\newcommand{\outv}{{\scriptscriptstyle{out}}}
\newcommand{\inv}{{\scriptscriptstyle{in}}}
\newcommand{\Tplanar}{{T_\circ}}
\newcommand{\Tplanarc}{{T_c}}
\newcommand{\zc}{{z_{m,D}^{(c)}}}
\newcommand{\tc}{{t_{m,D}^{(c)}}}
\newcommand{\setcells}{{\mathcal{F}_{m,D}^{\circ\circ}}}
\author{Guillaume Chapuy, LIX, \'Ecole Polytechnique}
\title{Asymptotic enumeration of constellations and related families of maps
 on orientable surfaces.} 
\begin{document}
 
\maketitle

\begin{abstract}
We perform the asymptotic enumeration of two classes of rooted maps on
orientable surfaces of genus $g$: $m$-hypermaps and $m$-constellations. 
%These maps generalise 
For $m=2$ they correspond 
respectively to maps with even face degrees and bipartite
maps. We obtain explicit asymptotic formulas for the number of such maps with any
finite set of allowed face degrees.

Our proofs rely on the generalisation to orientable surfaces of the
Bouttier-Di Francesco-Guitter bijection, which reduces maps to certain
arrangements of labelled multitype trees, and on the study the corresponding
generating series via algebraic methods. We also show that each of the $2g$ fondamental 
cycles of the surface contributes a factor $m$ between the numbers of
$m$-hypermaps and $m$-constellations --- for example, 
large maps of genus $g$ with
even face degrees are bipartite with probability tending to $\frac{1}{2^{2g}}$. 

A special case of our results implies former conjectures of Gao.
\end{abstract}

\section{Introduction}
\label{sec:in}

Maps are combinatorial objects which describe the embedding of a graph in a surface.
The enumeration of maps began in the sixties with the works of Tutte, in the
series of papers
\cite{Tutte:census-triangulations,Tutte:census-slicings,Tutte:census-Hamiltonian,Tutte:census-maps}.
By analytic techniques, involving recursive decompositions and non trivial manipulations of power series, Tutte
%studied a certain number of families of planar maps, and 
obtained beautiful and simple enumerative formulas for several families of planar maps.
His techniques were extended in the late eighties by several authors to more
sophisticated families of maps or to the case of maps of higher genus. 
Bender and Canfield (\cite{BeCa0,BeCa}) obtained the asymptotic number 
of maps on a given orientable surface. Gao (\cite{Gao}) obtained formulas for
the asymptotic number of $2k$-angulations on orientable surfaces, and
conjectured a formula for more general families (namely maps where the degrees of the faces are restricted to lie in
a given finite subset of $2\mathbb{N}$).

A few years later, Schaeffer (\cite{Sc:PhD}), following the work of Cori and Vauquelin (\cite{CoVa}), 
gave in in thesis a bijection between planar maps and certain labelled trees
which enables to recover the formulas of Tutte, and explains combinatorially 
their remarquable simplicity. This bijection has suscited a lot of interest in probability and physics, 
since it also enables to study geometrical aspects of large random maps
(\cite{ChassaingSc,LG,LGPa,BG,Miermont:Geodesiques,LG:Geodesiques}). Moreover, it has been
generalised in two directions. First, Bouttier, Di Francesco, and Guitter (\cite{BDFG}) gave
a construction that generalises Schaeffer's bijection to the large class of \emph{Eulerian maps}, which
includes for example maps with restricted face degrees, or constellations.
Secondly, Marcus and Schaeffer (\cite{MaSc}) generalised Schaeffer's construction to the case of maps
drawn on orientable surfaces of any genus, opening the way to a bijective
derivation (\cite{ChMaSc}) of the results of Bender and Canfield.

The first purpose of this article is to unify the two generalisations of Schaeffer's bijection: we show 
that the general construction of Bouttier, Di Francesco, and Guitter
stays valid in any genus, and involves the same kind of objects as developped in \cite{MaSc}.
Our second (and main) task is then to use this bijection to perform the
asymptotic enumeration of several families of maps, namely $m$-constellations
and $m$-hypermaps.  
These maps will be defined later, but let us mention now
that for $m=2$, they correspond respectively to bipartite maps, and maps with even
face degrees. In particular, a special case of our results implies the
conjectures of Gao(\cite{Gao}).
The proofs rely on a decomposition of the objects inherited
from the bijection to a finite number of combinatorial objects, which we call
the \emph{full schemes}, from which all the objects can be reconstructed using
an arrangement of lattice paths. The analysis of these paths and
their arrangements is then performed via generating series techniques.
The link between hypermaps and constellations
relies on elementary algebraic graph theory: we introduce a
discrete vector space, the \emph{typing space}, that encodes the length modulo
$m$ of the $2g$ fundamental cycles of $m$-hypermaps, and we show that large
hypermaps project asymptotically uniformly on that space. For example, for $m=2$, this
shows that maps of genus $g$ with even face degrees are bipartite with
probability tending to $\left(\frac{1}{2}\right)^{2g}$ when their size goes to infinity.

\section{Definitions and main results}
\label{sec:defandresults}

Let $\mathcal{S}_g$ be the torus with $g$-handles. A \emph{map} on $\mathcal{S}_g$
(or map of genus $g$) is a proper embedding of a finite graph $G$ in $\mathcal{S}_g$ 
such that the maximal connected components of $\mathcal{S}_g \setminus G$ are 
simply connected regions. Multiple edges an loops are allowed.
The maximal simply connected components are called the \emph{faces} of the map.
The degree of a face is the number of edges incident to it, counted with multiplicity.
A \emph{corner} consists in a vertex together with an
angular sector adjacent to it.

We consider maps up to homeomorphism, i.e. we identify two maps such that there exists
an orientation preserving homeorphism that sends one to another. In this
setting, maps become purely combinatorial objects (see \cite{Mohar-Thomassen} for
a detailed discussion on this fact). In particular, there are only a finite
number of maps with a given number of edges, opening the way to enumeration
problems.

All the families of maps considered in this article will eventually be 
rooted (which means that an edge has been distinguished and oriented), pointed
(when only a vertex has been distinguished), or both rooted and pointed.
In every case, the notion of oriented homeomorphism involved in the definition
of a map is adapted in order to keep trace of the pointed vertex or edge. 

The first very usefull result when working with maps on surfaces is \emph{Euler
characteristic formula}, that says that if a map of genus $g$ has
$f$ faces, $v$ vertices, and $n$ edges, then we have:
$$
v+f = a +2 -2g.
$$

An \emph{Eulerian map} on $\mathcal{S}_g$ is a map on $\mathcal{S}_g$, together with
a colouring of its faces in black and white, such that only faces of different colours are
adjacent. By convention, the root of an Eulerian map will always be oriented with a black face
on its right. This article will mainly be concerned with two special cases of
Eulerian maps, namely $m$-hypermaps and $m$-constellations.
\begin{definition}
\label{def:constellation}
Let $m\geq 2$ be an integer. An \emph{$m$-constellation} on $\mathcal{S}_g$ 
is a map on $\mathcal{S}_g$, together  with a colouring of its faces in black and white
such that:
\begin{itemize}
\item[{\bf(i)}]
only faces of different colours are adjacent.
\item[{\bf(ii)}]
black faces have degree $m$, and white faces have a degree which is a 
multiple of $m$.
\item[{\bf(iii)}]
every vertex can be given a label in $\{1,\dots,m\}$ such that around every black face, 
the labels of the vertices read in clockwise order are exaclty $1,\dots,m$.
\end{itemize}
A map that satisfies conditions {\bf (i)} and {\bf (ii)} is called a
$m$-hypermap.
\end{definition}
It is a classical fact that in the planar case, conditions $(i)$ and $(ii)$ imply
condition $(iii)$, so that all planar $m$-hypermaps are in fact
$m$-constellations ; however, this is not the case in higher genus.
% A map which
%satisfies conditions $(i)$ and $(ii)$ will be called here a
%\emph{$m$-hypermap}. The enumeration of these maps will be adressed in the
%last section (\ref{sec:mhypermaps}).
Observe that $2$-hypermaps are in bijection with maps whose all faces have even
degree (in short, \emph{even maps}). This correspondance relies on contracting
every black face of the $2$-hypermap to an edge of the even map. Observe
also that this correspondance specializes to a bijection between
$2$-constellations and bipartite maps. This makes $m$-constellations and
$m$-hypermaps a natural object of study. For previous enumerative studies on
constellations, and for their connection to the theory of enumeration of rational functions on a surface, 
see \cite{LaZv},\cite{MBM-Sc}.

In the rest of the paper, $m\geq 2$ will be a fixed integer, and
$D\subset\mathbb{N}_{>0}$  will be a non-empty and finite subset of the
positive integers. If $m=2$, we assume furthermore that $D$ is not reduced to
$\{1\}$. A \emph{$m$-hypermap} with degree set $mD$ is a $m$-hypermap
in which all white faces have a degree which belongs to $mD$. The same
definition holds for constellations. For example, a 
$2$-constellation of degree set $2\{2\}$ is (up to contracting black faces to
edges) a bipartite quadrangulation. Finally, the \emph{size} of a $m$-hypermap is its number
of black faces.

Our main results are the two following theorems:
\begin{theorem}%[\ref{thm:main}]
\label{thm:main}
The number $c_{g,D,m}(n)$ of rooted $m$-constellations of genus $g$, 
degree set $mD$, and size $n$ satisfies: 
\begin{eqnarray*}
c_{g,D,m}(n) \sim 
  t_g \frac{\gcd(D)}{2}  
             \left(\frac{(m-1)^{5/2}\sqrt{2\gamma_{m,D}}} {
             m\beta_{m,D}^{5/2}} \right)^{g-1} 
            n^{\frac{5(g-1)}{2}}
             {({z_{m,D}^{(c)}})}^{-n}
\end{eqnarray*}
when $n$ tends to infinity
along multiples of $\gcd(D)$, and where:
\begin{itemize}
  \item[$\bullet$] $t_{m,D}^{(c)}$ is the smallest positive root of: 
  $\displaystyle\sum_{k\in D}[(m-1)k-1] {mk-1 \choose k} [t_{m,D}^{(c)}]^{k} =1 $
  \item[$\bullet$] $\displaystyle \beta_{m,D} = \sum_{k\in D} [(m-1)k] {mk-1
  \choose k} [t_{m,D}^{(c)}]^{k} $
  \item[$\bullet$] $\displaystyle \gamma_{m,D} = \sum_{k\in D}
  [(m-1)k][(m-1)k-1] {mk-1 \choose k} [t_{m,D}^{(c)}]^{k} $
  \item[$\bullet$]
  $\displaystyle z_{m,D}^{(c)}=t_{m,D}^{(c)}[\beta_{m,D}]^{1-m}$
\end{itemize}
and the constant $t_g$ is defined in \cite{BeCa0}.
\end{theorem}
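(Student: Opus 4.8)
The plan is to push the whole problem through the bijective picture and then run a singularity analysis on the resulting generating series. First I would apply the genus-$g$ version of the Bouttier--Di Francesco--Guitter bijection established earlier in the paper, turning a rooted $m$-constellation of genus $g$ and size $n$ into a labelled multitype mobile carried by $\mathcal{S}_g$; in this encoding the black faces become vertices of one colour, the white faces of degree $mk$ become vertices bearing $k$ legs, the labels record increments obeying the local rules inherited from condition $(iii)$, and a variable $z$ tracks the number of black faces. I would then stratify these objects by their \emph{full scheme}: the finite skeleton obtained by deleting every planar piece and keeping only the topological core that records how the $2g$ fundamental cycles sit on the surface. For fixed $g$ there are finitely many schemes, and every constellation is recovered from its scheme by grafting planar labelled mobiles and by decorating the edges of the scheme with lattice paths that carry the label increments along the handles, subject to global consistency around each cycle.

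The analytic core is to set up the generating functions. The planar mobiles are governed by a series $T(t)$ whose dominant singularity sits at $t=\tc$, this being exactly the value at which the relevant Jacobian degenerates: the defining relation $\sum_{k\in D}[(m-1)k-1]{mk-1\choose k}t^{k}=1$ is precisely that degeneracy condition and produces a square-root singularity. Writing $\phi(t)=\sum_{k\in D}{mk-1\choose k}t^{k}$, one checks directly that $\beta_{m,D}=(m-1)\,t\,\phi'(t)$ and $\gamma_{m,D}=(m-1)\,t\,\frac{d\beta_{m,D}}{dt}-\beta_{m,D}$ are the first two moments that control the diffusive behaviour of the label-carrying paths, and that $\zc=\tc\,[\beta_{m,D}]^{1-m}$ is the image of the critical point under the substitution $z=t\,[\beta_{m,D}(t)]^{1-m}$ passing from the tree variable to the size variable; this already explains the exponential factor ${(\zc)}^{-n}$.

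Finally I would evaluate, scheme by scheme, the contribution of the path arrangement: each edge of the scheme carries a path whose length and displacement are free up to the cycle-closure relations, so summing over configurations and extracting $[z^{n}]$ by a local-limit / quasi-power estimate near the common singularity yields the factor $n^{5(g-1)/2}$ together with an explicit constant depending only on the numbers of vertices and edges of the scheme. Summing over schemes, only the trivalent (``cubic'') ones contribute to leading order, and their combined contribution is organised so as to reproduce the Bender--Canfield constant $t_g$, while the prefactor $\frac{\gcd(D)}{2}$ and the restriction of $n$ to multiples of $\gcd(D)$ come from the arithmetic periodicity of the allowed degrees. The main obstacle will be this last step: controlling the generating function of a whole arrangement of correlated lattice paths near their common singularity, and proving that the resulting multidimensional coefficient extraction delivers exactly the stated constant --- not merely the correct power of $n$ --- so that the scheme-sum collapses precisely onto $t_g$ with the displayed powers of $(m-1)$, $\beta_{m,D}$, $\gamma_{m,D}$ and $m$.
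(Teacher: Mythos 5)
Your outline follows the same strategy as the paper (genus-$g$ BDFG bijection, reduction to finitely many full schemes, grafted planar mobiles, lattice-path superchains, singularity analysis with only cubic schemes surviving), and your algebraic identifications are correct: $\beta_{m,D}=(m-1)t\phi'(t)$, $\gamma_{m,D}=(m-1)t\beta_{m,D}'(t)-\beta_{m,D}$, the criticality condition for $t_{m,D}^{(c)}$ as the degeneracy of the planar fixed-point equation, and $z_{m,D}^{(c)}=t_{m,D}^{(c)}\beta_{m,D}^{1-m}$ as the critical point of the change of variable $t(z)=z\,T(z)^{m-1}$. However, there is a concrete gap at the very start: the BDFG bijection does not encode rooted constellations, it encodes \emph{rooted and pointed} ones, because the labels are distances from a distinguished vertex. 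Everything your analysis would produce is therefore the pointed count $c^\bullet_{g,m,D}(n)$, whose growth is $n^{(5g-3)/2}\,(z_{m,D}^{(c)})^{-n}$, not the stated $n^{5(g-1)/2}=n^{(5g-5)/2}$. To pass to the rooted count one must divide by the number of vertices, and since that number fluctuates one needs a concentration statement: in the paper this is the ``de-pointing lemma'', proved by showing (via the composition schema and Drmota's theorem applied to the planar parts) that a uniform random object of size $n$ has $\sim \frac{(m-1)n}{\beta_{m,D}}$ labelled vertices in probability. This step is not cosmetic --- it changes the exponent of $n$ by one and contributes the factor $\frac{m-1}{\beta_{m,D}}$ without which the displayed constant (in particular the power $\beta_{m,D}^{-5/2}$ per genus and the overall normalization) cannot come out right. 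Your proposal never mentions the pointed vertex, so as written it proves an asymptotic for a different quantity.

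Two further points where the proposal under-resolves what the paper actually has to do. First, the obstacle you flag at the end (controlling the arrangement of correlated paths near the common singularity) is exactly where the technical content lies, and a ``local-limit / quasi-power estimate'' is not enough by itself: the kernel $1-P_{m,D}(X,t)$ may have multiple roots other than $X=1$ at $t=t_{m,D}^{(c)}$, each of which makes individual partial-fraction coefficients $C_i$ diverge; the paper must group conjugate roots and show these divergences cancel, so that only the principal branch $\alpha_1$ contributes, before the geometric summation over the label increments $\delta_j$ can be carried out. Second, the scheme sum is not ``organised so as to reproduce'' the Bender--Canfield constant by direct computation: $t_g$ is in effect \emph{defined} through the sum $\sum c_{\mathfrak{s},\lambda}$ over dominant pairs, and is only identified with the constant of Bender and Canfield a posteriori, by specializing to $m=2$, $D=\{2\}$ and matching against the known asymptotics of maps via Tutte's bijection with bipartite quadrangulations. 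Your plan should include this self-consistency step, since no independent evaluation of the scheme sum is available.
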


\begin{theorem}%[\ref{thm:hypermaps}]
\label{thm:hypermaps}
The number $h_{g,D,m}(n)$ of rooted $m$-hypermaps of degree set $mD$
and size $n$ on a surface of genus $g$ satisfies: 
\begin{eqnarray*}
h_{g,D,m}(n) &\sim& m^{2g} {c}_{g,M,D} (n)
%\\
%&\sim&
%  t_g \frac{m^2(m-1)\gcd(D)}{2}  
%             \left(\frac{m(m-1)^{5/2}\sqrt{2\gamma_{m,D}}} {
%             \beta_{m,D}^{5/2}} \right)^{g-1} 
%            n^{\frac{5(g-1)}{2}}
%             {({z_{m,D}^{(c)}})}^{-n}
\end{eqnarray*}
when $n$ tends to infinity
along multiples of $\gcd(D)$.
\end{theorem}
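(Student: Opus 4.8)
The plan is to read condition \textbf{(iii)} of Definition~\ref{def:constellation} as the vanishing of a cohomological obstruction attached to each $m$-hypermap, and then to prove that this obstruction is asymptotically \emph{equidistributed} over all $m$-hypermaps. I would first make the \emph{typing space} explicit. Orient each edge of an $m$-hypermap $H$ so that its black face lies on the right. Since consecutive vertices around a black face must receive consecutive labels, every edge $e$ imposes the constraint $\ell(\mathrm{head}(e)) - \ell(\mathrm{tail}(e)) \equiv 1 \pmod m$ on any admissible labelling $\ell \colon V(H) \to \mathbb{Z}/m\mathbb{Z}$. This prescribes a $\mathbb{Z}/m\mathbb{Z}$-valued $1$-cochain $\phi$; its coboundary vanishes around every black face (the $m$ increments sum to $m \equiv 0$) and, because white faces have degree in $m\mathbb{Z}$, around every white face as well, so $\phi$ is a cocycle and defines a class in $H^1(\mathcal{S}_g; \mathbb{Z}/m\mathbb{Z}) \cong (\mathbb{Z}/m\mathbb{Z})^{2g}$, which I call the \emph{type} $\tau(H)$. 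Concretely $\tau(H)$ is the vector of label variations modulo $m$ read along a fixed basis of $2g$ fundamental cycles. An admissible labelling exists — that is, $H$ is an $m$-constellation — precisely when $\tau(H) = 0$. Hence, writing $h_\tau(n)$ for the number of rooted $m$-hypermaps of size $n$ and type $\tau$, we have $c_{g,D,m}(n) = h_0(n)$ and $h_{g,D,m}(n) = \sum_{\tau} h_\tau(n)$, and the theorem reduces to proving $h_\tau(n) \sim h_0(n)$ for every $\tau \in (\mathbb{Z}/m\mathbb{Z})^{2g}$, along multiples of $\gcd(D)$.

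Next I would transport this to the combinatorial model behind Theorem~\ref{thm:main}. Through the genus-$g$ Bouttier--Di Francesco--Guitter bijection, an $m$-hypermap is encoded by one of the finitely many full schemes together with an arrangement of labelled lattice paths along its edges; the $2g$ fundamental cycles correspond to explicit cycles of the scheme, and $\tau(H)$ is a fixed $\mathbb{Z}/m\mathbb{Z}$-linear function of the net label displacements of the paths carried by those cycles. To isolate a single type I would apply a roots-of-unity filter over the character group of $(\mathbb{Z}/m\mathbb{Z})^{2g}$,
\[
h_\tau(n) = \frac{1}{m^{2g}} \sum_{\chi} \overline{\chi(\tau)}\, H_\chi(n), \qquad H_\chi(n) = \sum_{|H| = n} \chi\bigl(\tau(H)\bigr),
\]
where the principal term $H_{\mathbf 1}(n) = h_{g,D,m}(n)$ is the full count, while each nontrivial $\chi$ weights the lattice-path generating series by a factor $\omega^{s}$, with $s$ the net displacement along the relevant cycles and $\omega = e^{2\pi \mathrm i j/m}$ a fixed $m$-th root of unity.

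The main obstacle, and the heart of the argument, is to show that for every nontrivial character $\chi$ the twisted sum $H_\chi(n)$ is of strictly smaller order than $H_{\mathbf 1}(n)$. Inserting a nontrivial root-of-unity twist into the generating function of a lattice path strictly pushes out its dominant singularity — the modulus of the twisted step polynomial is strictly submaximal away from the critical point, an aperiodicity phenomenon forced by the step sets governed by $D$ — so the twisted series grows with a strictly smaller exponential rate, or at worst the same rate with a strictly smaller polynomial exponent. Carrying this out cleanly and uniformly over the finitely many schemes and over the cycles that support the twist is where the real work lies. Once it is established, only the principal term survives and yields $h_\tau(n) \sim \frac{1}{m^{2g}}\, h_{g,D,m}(n)$ for each $\tau$, equivalently $h_{g,D,m}(n) \sim m^{2g}\, c_{g,D,m}(n)$, as claimed.
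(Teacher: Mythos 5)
Your cohomological framing is sound and is in fact the same structural idea as the paper's: the obstruction to condition \textbf{(iii)} lives in a $2g$-dimensional $\mathbb{Z}/m\mathbb{Z}$-space, which the paper realizes as the set of typings of the scheme (by the Kirchoff law, Proposition~\ref{prop:Kirchoff}, this is the cycle space of the scheme, of cardinality $m^{2g}$ by Lemma~\ref{lemma:dimension}), and the theorem amounts to asymptotic equidistribution of this invariant; the roots-of-unity filter is a legitimate way to package equidistribution. The proposal breaks down, however, exactly where you make the invariant concrete: the class $\tau(H)$ is \emph{not} a function of the net label displacements of the paths carried by the fundamental cycles. Around any closed cycle the labels return to their starting value, and, more to the point, the increments $\Delta(e)$ of the superchains are combinatorially independent of the obstruction: for $m=2$, $D=\{2\}$, the cells of type $0$ already have increments $-1,0,+1$ (Figure~\ref{fig:3typesstars}), so label displacements modulo $m$ carry no information about bipartiteness. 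What carries the class is the extra discrete datum the paper calls the typing: the types $\tau(e)$ of the special split-edges along each superchain (on a special edge the cocycle value $\mathrm{incr}+1$ equals precisely the type, so the evaluation of your cocycle on a scheme cycle is the sum of the $\tau(e)$ along it). Consequently, twisting the variable that marks displacements by a root of unity filters maps by the wrong quantity.

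This misidentification then invalidates the analytic mechanism you propose. Even for the correct invariant there is no ``pushing out of the dominant singularity'': the series $R_{\mathfrak{s},\lambda,\tau}(z)$ attached to the $m^{2g}$ typings of a given scheme all have the \emph{same} singularity $\zc$, the same exponent, and the same leading constant (Equation~\ref{eq:devRaftersum}), so a nontrivially twisted sum is of lower order only because $\sum_{\tau}\chi(\tau)=0$ kills exactly equal leading terms; the exponential rate $\zc^{-n}$ is unchanged, contrary to what an aperiodicity/submaximal-modulus argument would give. Proving that the leading constants are equal across typings is precisely the hard content of the paper: the kernel identity $H_n^{\tau}(z)=\Tplanar(z)H_n^0(z)$ (Equation~\ref{eq:Htau}) together with the summation over nodal stars, where the resulting factor $\Tplanarc^{-n_{\neq}}$ cancels the factor $\Tplanarc^{n_{\neq}}$ coming from the chains of type $\neq 0$. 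Your proposal defers exactly this step to ``the real work'' and offers in its place a mechanism that supports only the incorrect branch of your hedged claim; as it stands, the argument reduces the theorem to a statement essentially equivalent to the theorem itself, with no viable route to it.
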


Observe that Theorem~\ref{thm:hypermaps} can be reformulated as follows: the
probability that a large $m$-hypermap of genus $g$ is a $m$-constellation tends
to $1/m^{2g}$. To our knowledge, this fact had only been observed in the case
of quadrangulations (which are known to be bipartite with probability
$\sim 1/4^g$, see \cite{Be:overview} and references therein). Putting
Theorems~\ref{thm:main} and~\ref{thm:hypermaps} together gives an asymptotic
formula for the number $h_{g,D,m}(n)$, which was already proved by Gao in the
case where $m=2$ and $D$ is a singleton, and conjectured for $m=2$ and
general $D$ in the paper~\cite{Gao}. All the other cases were, as far as we
now, unknown.

\section{The Bouttier, Di Francesco, and Guitter's bijection on an orientable
surface.}
\label{sec:bij}

In this section, we describe the Bouttier, Di Francesco, and Guitter's
bijection on $\mathcal{S}_g$. This construction
has been introduced in \cite{BDFG}, as a generalisation of the Cori-Vauquelin-Schaeffer bijection,
and  provides a correspondance between planar maps and plane trees. Here, we unify
this generalisation with the generalisation of \cite{MaSc}, where the classical bijection
of Cori, Vauquelin, and Schaeffer is extended to any genus.

All the constructions are local and are similar to the planar case. The proof
that the construction is well defined (Lemma~\ref{lemma:welldef})is an
adaptation of the one of \cite{MaSc}, and is different of the one of
\cite{BDFG}, that uses the planarity. After that, everything
(Lemma~\ref{lemma:BDFG}) is already contained in \cite{BDFG}. 
In particular, we will not state all proofs.

\subsection{From maps to mobiles.}
\begin{figure}[h]
\centerline{\includegraphics[scale=1]{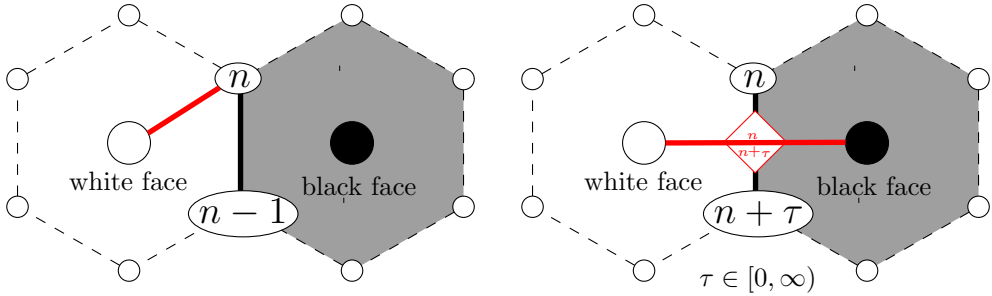}}
\caption{The Bouttier-Di Francesco-Guitter construction.}
\label{fig:BDFGrules}
\end{figure}
Let $\mathfrak{m}$ be a rooted and pointed Eulerian map on
$\mathcal{S}_g$ (i.e. $\mathfrak{m}$ has at the same time a root edge and a
distinguished vertex). \subsubsection*{\bf{The BDFG construction}:}
\begin{itemize}
  \item[\bf{(1)}]{\bf orientation and labelling.}
First, we orient every edge of $\mathfrak{m}$ such as it has a black face on its
right. Then, we label each vertex $v$ of $\mathfrak{m}$ by the minimum number of
oriented edges needed to reach it from the pointed vertex. Observe that along an
oriented edge, the label can either increase of $1$, either
decrease by any nonnegative number.
%\item[]The
%increment of the label along $e$ of $e$ (so the type belongs to
%$\llbracket-1,+\infty)$).
 \item[\bf{(2)}]{\bf  local construction.}
First, inside each face of $\mathfrak{m}$, we add a new vertex of the colour of
the face. Then, inside each white face $F$ of $\mathfrak{m}$, and
for all edge $e$ adjacent to $F$, we procede to the following construction (see
Figure \ref{fig:BDFGrules}):
\begin{itemize}
\item
 if the label increases by $1$ along $e$, we  add a new edge between the
 unlabelled white vertex at the center of $F$ and the extremity of $e$ of greatest label.
\item
 if the label decreases by $\tau\geq 0$ along $e$, we add an new edge between
 the two central vertices lying at the centers of the two faces separated by 
$e$. Moreover, we mark each side of this is edge with a \emph{flag}, which is
itself labelled by the label in $\mathfrak{m}$ of the corresponding extremity
of $e$, as in Figure~\ref{fig:BDFGrules}.
\end{itemize}
\item[\bf{(3)}]{\bf erase original edges.} We let $\bar{\mathfrak{m}}$
be the map obtained by erasing all the original edges of $\mathfrak{m}$ and the
 pointed vertex $v_0$ (i.e. the consisting of all the new  vertices and
 edges added in the construction, and all the original vertices of the map
 except the pointed vertex). 
 \item[\bf{(4)}]{\bf choose a root  and shift labels}. We define the root of
 $\bar\mathfrak{m}$ as the edge associated to the root edge of
 $\mathfrak{m}$ in the above construction ; we orient it such that it leaves 
 a white unlabelled vertex. The \emph{root label} is either the label of the
 only labelled vertex adjacent to the root edge (if it exists), either the label
 of the flag situated on the left of the root edge. We now translate all the
 labels in $\bar\mathfrak{m}$ by the opposite of the root label, so that the
 new root label is $0$: we let $Mob(\mathfrak{m})$ be the map obtained at this
 step. A planar example is shown on Figure~\ref{fig:planarbij}.
\end{itemize}
\begin{figure}[h]
\centerline{\includegraphics[scale=1.2]{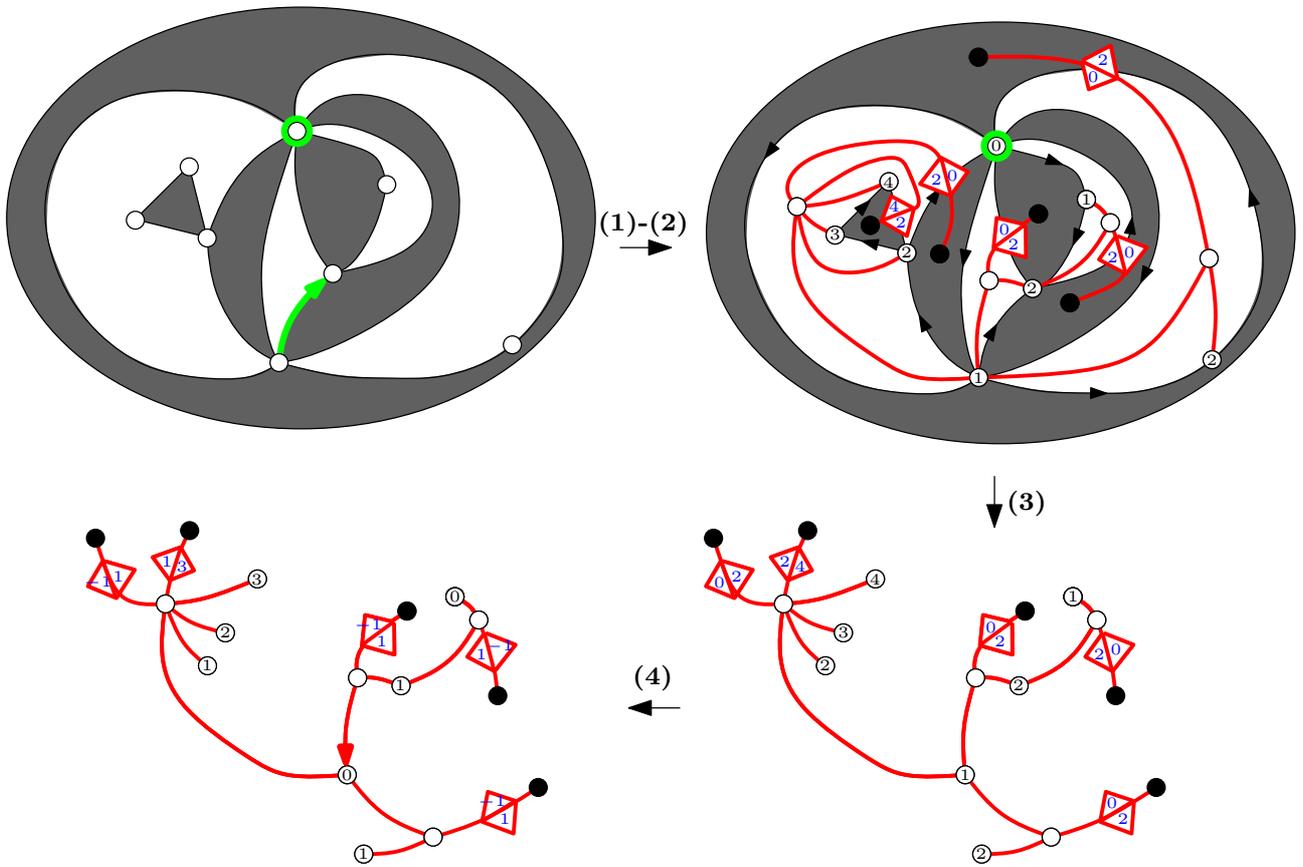}}
\caption{A rooted and pointed $3$-constellation on the sphere, and its
associated mobile.}
\label{fig:planarbij}
\end{figure}

Recall that a $g$-tree is a map on $\mathcal{S}_g$ which has only one face. In
the planar case, from Euler characteristic formula, this is equivalent to the
classical graph-theoretical definition of a tree. However, in positive genus, a $g$-tree
always has cycles, and therefore will never be a tree, in the graph sense.
We have:
\begin{lemma}
\label{lemma:welldef}
$\mathrm{Mob}(\mathfrak{m})$ is a well-defined map on $\mathcal{S}_g$, and is moreover
a $g$-tree.
\end{lemma}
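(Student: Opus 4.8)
The plan is to follow the Marcus--Schaeffer strategy, treating each face of $\mathfrak{m}$ as a topological disk and reserving a global argument only for the face count via Euler's formula.

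First I would settle well-definedness of the labelling in step \textbf{(1)}. Orienting every edge with a black face on its right turns the boundary of each black face into a directed cycle, and these cycles cover all edges of $\mathfrak{m}$. Since the underlying graph is connected, any two black-face cycles are joined by a chain of cycles sharing vertices, so the oriented map is strongly connected; in particular every vertex is reachable from $v_0$ by a directed path and the label $\ell(v)$ is a finite nonnegative integer. Because labels are integers and $\ell(w)\le\ell(u)+1$ along any directed edge $u\to w$ (the shortest-path inequality), the increment along an edge is an integer $\le 1$, i.e. it is either $+1$ or $-\tau$ with $\tau\ge 0$. This dichotomy is exhaustive, so step \textbf{(2)} assigns to every edge of $\mathfrak{m}$ exactly one of the two local rules, hence exactly one new edge.

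Next I would check that the local construction yields a genuine embedded graph on $\mathcal{S}_g$. By definition of a map, each face of $\mathfrak{m}$ is an open disk; inside such a disk the recipe of Figure~\ref{fig:BDFGrules} is purely planar, so the added vertices and edges can be drawn without crossings exactly as in the planar construction of \cite{BDFG}, the labelling being the only non-local input and having been fixed above. Erasing the original edges and the vertex $v_0$ in step \textbf{(3)} does not change the surface, so $\mathrm{Mob}(\mathfrak{m})$ is drawn on $\mathcal{S}_g$. Connectivity is seen by tracing labels downward: any surviving vertex $w\ne v_0$ is the head of an increasing edge $u\to w$ on a shortest path, hence is joined to the white centre of the adjacent white face, and the centres are in turn linked to one another and to lower-labelled vertices through the decreasing and increasing edges, so the whole object is connected.

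It then remains to count faces, which is where the genus enters cleanly. Writing $V,E,F$ for the numbers of vertices, edges and faces of $\mathfrak{m}$, the construction produces one centre per face and keeps all original vertices but $v_0$, so $\mathrm{Mob}(\mathfrak{m})$ has $F+V-1$ vertices; since each edge of $\mathfrak{m}$ (each being adjacent to a unique white face) contributes exactly one new edge, it has $E$ edges. As $\mathrm{Mob}(\mathfrak{m})$ is embedded on $\mathcal{S}_g$, Euler's formula gives for it $(F+V-1)-E+f=2-2g$, while for $\mathfrak{m}$ it gives $V-E+F=2-2g$; subtracting the second relation from the first yields $f=1$. Hence $\mathrm{Mob}(\mathfrak{m})$ has a single face and is a $g$-tree. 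The main obstacle is the passage from the plane to $\mathcal{S}_g$: unlike \cite{BDFG}, one cannot invoke global planarity to guarantee that the procedure is coherent, so the real content is the well-definedness of the labelling --- i.e. the reachability of every vertex from $v_0$ under the black-face-on-the-right orientation --- together with the verification that the face-by-face drawings glue into a connected graph. Once these are in place, the topological subtlety of the handles is absorbed entirely by the Euler-characteristic bookkeeping, which replaces the planar ``one face'' count.
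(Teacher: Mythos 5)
There is a genuine gap, and it sits exactly where the real content of the lemma lies. Your final step applies Euler's formula to $\mathrm{Mob}(\mathfrak{m})$ in the form $(F+V-1)-E+f=2-2g$, but this formula is valid only for \emph{cellular} embeddings, i.e.\ embeddings all of whose faces are simply connected --- which is precisely the ``well-defined map on $\mathcal{S}_g$'' assertion you are supposed to prove. Being a connected graph drawn on $\mathcal{S}_g$ is not enough: a contractible cycle on the torus is connected and embedded, yet one of its two complementary regions is a handle-carrying region, and $V-E+F=2\neq 2-2g=0$. The danger is real here, because erasing edges and a vertex from a map (your step \textbf{(3)}) can perfectly well merge faces into a non-simply-connected region; the phrase ``does not change the surface'' only says the drawing still lives on $\mathcal{S}_g$, not that the complement of the drawing consists of disks. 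So your argument assumes cellularity to prove the face count, while cellularity is the nontrivial half of the statement; the Euler bookkeeping cannot ``absorb the topological subtlety of the handles'' --- it is only licensed after that subtlety has been dealt with.

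The paper's proof spends essentially all of its effort on exactly this point. It forms the auxiliary map $\mathfrak{m}'$ (original map plus new edges, with flag crossings turned into tetravalent vertices), passes to the dual $\widehat{\mathfrak{m}'}$, and studies the subgraph $\mathfrak{t}$ dual to the edges that will be erased. Using the orientation and the monotonicity of labels along the construction rules, it shows every cycle of $\mathfrak{t}$ is oriented, has constant label on its right, and therefore can only encircle the single vertex $v_0$. Since the dual of the erased edge set has no other cycle, deleting the original edges never merges a face with itself, and the unique exceptional cycle is neutralized by the simultaneous deletion of $v_0$; hence all faces of $\mathrm{Mob}(\mathfrak{m})$ remain disks and it is a bona fide map of genus $g$. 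Only then does the paper run the same Euler computation you wrote down (with the same counts, $e$ edges and $b+w+v-1$ vertices) to conclude that there is exactly one face. To repair your proof you would need to supply this dual no-cycle argument (or an equivalent cellularity argument); the labelling well-definedness and connectivity you establish are correct but are not the missing ingredient.
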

\begin{proof}
Our proof follows the arguments of \cite{ChMaSc}.
We let $\mathfrak{m}'$ be the map consisting of 
the original map $\mathfrak{m}$ and all the new vertices and edges added in the
previous construction ; to avoid edge-crossings, each time a flagged edge of
$Mob(\mathfrak{m})$ crosses an edge of $\mathfrak{m}$, we split those two edges
in their middle, and we consider the pair of flags lying in the middle of the
flagged edge as a tetravalent vertex of $\mathfrak{m}'$, linked to the four
ends created by the edge-splitting.

It is clear from the construction rules that each black or white unlabelled
vertex is adjacent to at least one flagged edge, so that
$\mathfrak{m}'$ is a well defined connected map of genus $g$.
We now let $\widehat{\mathfrak{m}'}$ be the dual map of $\mathfrak{m}'$, and 
$\mathfrak{t}$ be the submap of $\widehat{\mathfrak{m}'}$ induced by the set of
edges of $\mathfrak{m}'$ which are dual edges of original edges of $\mathfrak{m}$. 
We now examine the cycles of $\mathfrak{t}$.

By convention, we orient each edge of $\mathfrak{t}$ as follows: if the edge lies between a vertex and 
a flag, then we orient it in such a way that it has the flag on its left. If it
lies between two vertices, then we orient it in such a way that is has the
vertex of greatest label on its left. 
Then by the construction rules (see Figure~\ref{fig:nocycle}) each
face of $\mathfrak{m}'$ carries an unique outgoing edge of $\mathfrak{t}$.
Hence, if $\mathfrak{t}$ contains a cycle of edges, it is in fact an oriented cycle. 
Moreover, when going along an oriented  cycle of edges of $\mathfrak{t}$, the
label present at the right of the edge cannot increase (as seen on checking the
different cases on Figure~\ref{fig:nocycle}). Hence this label is constant
along the cycle, and looking one more time at the different cases on
Figure~\ref{fig:nocycle}, this is possible only if the cycle encircles a single vertex. 
Such a vertex cannot be incident to any vertex with a smaller label (otherwise,
by the construction rules, an edge of $Mob(\mathfrak{m})$ would cut the cycle),
which implies by definition of the labelling by the distance that the encycled vertex 
is the pointed vertex $v_0$.

Hence $\mathfrak{t}$ has no other cycle than the cycle encycling $v_0$. This
means that, after removing $v_0$ and all the original edges of $\mathfrak{m}$, one does not create any non simply connected face, and that
$\mathrm{Mob}(\mathfrak{m})$ is a well defined map of genus $g$ (for a detailed
topological discussion of this implication, see the appendix in \cite{ChMaSc}).

Finally, let $b$ (resp. $w$) be the number of black (resp. white) faces of $\mathfrak{m}$,
and $v$ (resp. $e$) be its number of vertices (resp. edges). Then by Euler
characteristic formula, one has: $$
(b + w)  + v = e + 2 - 2g
$$ 
Now, by construction, $\mathrm{Mob}(\mathfrak{m})$ has $e$ edges and $b+w+v-1$ vertices. Hence
applying Euler characteristic formula to $\mathrm{Mob}(\mathfrak{m})$ shows that it has exactly one face, i.e. that it
is a $g$-tree.
\end{proof}
\begin{figure}[t]
\centerline{\includegraphics[scale=1.0]{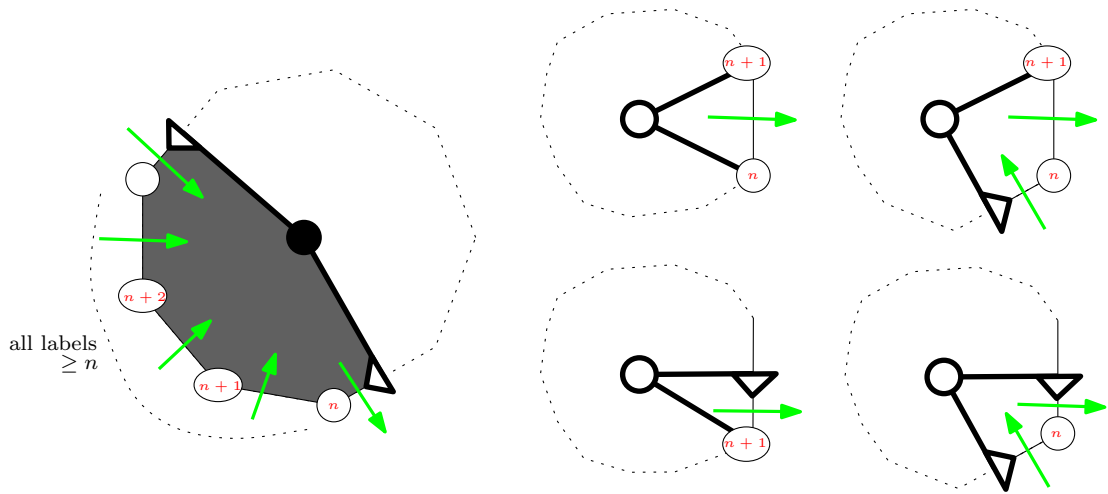}}
\caption{
A typical black face, and the four types of white faces of $\mathfrak{m}'$.
When a cycle of arrows crosses a face, the label at its right cannot increase.
Moreover, it remains constant if and only if it turns around a single vertex.}
\label{fig:nocycle}
\end{figure}

\subsection{From mobiles to maps}

Our definition of a mobile is taken from \cite{BDFG}:
\begin{definition}\label{def:mobile}
A $g$-mobile is a rooted $g$-tree $\mathfrak{t}$ such that:
\begin{itemize}
\item[\bf i.] $\mathfrak{t}$ has vertices of three types: unlabelled ones, which
can be black or white, and labelled ones carrying integer labels.
%\item the labels are $\geq 1$, and there is at least one vertex labelled $1$.
\item[\bf ii.] edges can either connect a labelled vertex to a white unlabelled
vertex, either connect two unlabelled vertices of different color. The edges of
the second type carry on each side a flag, which is itself labelled by an integer.
\item[\bf iii-w.] when going clockwise around a white unlabelled vertex:
 \begin{itemize}
  \item a vertex labelled $l$ is followed by a label $l-1$ (either vertex or
  flag).
  \item two successive flags of labels $l$ and $l'$ lying on the
  same edge satisfy $l' \geq l$ ;the second flag
  is followed by a label $l'$ (either vertex or flag).
 \end{itemize} 
\item[\bf iii-b.] when going clockwise around a black unlabelled vertex,
two flags of labels $l$ and $l'$ lying on the same side of an edge satisfy 
     $l' \leq l$ ; the second flag is followed by a flag labelled
     $\geq l'$.
     \item[\bf iv.] The root edge is oriented leaving a white unlabelled
     vertex. The root label (which is either the label of the labelled vertex
     adjacent to the root, if it exists, either the label of the flag present
     on its left side) is equal to $0$.
\end{itemize}
\end{definition}

One easily chack that the BDFG construction leads to a map that satisfies the
conditions above.
Hence, thanks to the previous lemma, for every eulerian map $\mathfrak{m}$, $\mathrm{Mob}(\mathfrak{m})$
is a $g$-mobile. We now describe the reverse construction, that associates a eulerian map to any $g$-mobile.
This construction takes place inside the unique face of $\mathrm{Mob}(\mathfrak{m})$. In particular, we want to insist
on the fact that all the work specific to the non planar case has been done when proving that
$\mathrm{Mob}(\mathfrak{m})$ is a $g$-tree. Until the rest of this section, everything is similar to the planar case. 
For this reason, we refer the reader to \cite{BDFG} for proofs.
Let $\mathfrak{t}$ be a $g$-mobile. The closure of $\mathfrak{t}$ is defined as follows:
\subsubsection*{\bf{Reverse construction:}}%
\begin{itemize}
\item[\bf{(0)}] Translate all the labels of $\mathfrak{t}$ by the same integer
in such a way that the minimum label is either a flag of label $0$, either a
labelled vertex of label $1$.
\item[\bf{(1)}]
 Add a vertex of label $0$ inside the unique face of $\mathfrak{t}$. Connect it by an edge
 to all the labelled corners of $\mathfrak{t}$ of label $1$, and to all the
 flags labelled $0$.
\item[\bf{(2)}]
 Draw an edge between each labelled corner of $\mathfrak{t}$ of label $n\geq 2$ and its
 succesor, which is the first labelled corner or flag with label $n-1$ encountered when going counterclockwise around $\mathfrak{t}$.
\item[\bf{(3)}]
 Draw an edge between each flag of label $n$ and its succesor, which is the first
 labelled corner or flag with label $n$ encountered when going counterclockwise around $\mathfrak{t}$.
\item[\bf{(4)}]
 Remove all the original edges and unlabelled vertices of $\mathfrak{t}$. 
\end{itemize}
We call $\mathrm{Map}(\mathfrak{t})$ the map obtained at the end of this
construction. The root of $Map(\mathfrak{t})$ is either the root joining the
endpoint of the root of $\mathfrak{t}$ to its succesor (if it is labelled),
either the edge corresponding to the flags lying on the root edge.
The fact that this construction is reciprocal to the previous one is
proved in the planar case in \cite{BDFG}, but, as we already said, every 
argument stay valid in higher genus. Hence we have: \begin{lemma}[\cite{BDFG}]
\label{lemma:BDFG}
For every eulerian map $\mathfrak{m}$, one has: 
$\mathrm{Map}(\mathrm{Mob}(\mathfrak{m}))=\mathfrak{m}$
For every $g$-mobile $\mathfrak{t}$, one has:
$\mathrm{Mob}(\mathrm{Map}(\mathfrak{t}))=\mathfrak{t}
$
\end{lemma}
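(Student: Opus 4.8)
The plan is to reduce both identities to the planar arguments of \cite{BDFG}, exploiting the fact---already secured by Lemma~\ref{lemma:welldef}---that $\mathrm{Mob}(\mathfrak{m})$ has a single face. The essential observation is that both constructions are specified by purely local rules (the cases of Figure~\ref{fig:BDFGrules} for the forward map, and the successor rules of the reverse construction), so the only place where the genus could interfere is through the global incidence structure of the face in which the closure is performed.

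First I would check that the forward construction always outputs a genuine $g$-mobile, i.e. that $\mathrm{Mob}(\mathfrak{m})$ satisfies conditions \textbf{i}--\textbf{iv} of Definition~\ref{def:mobile}. This is a case-by-case inspection of the local rules: the colour and type constraints on edges come directly from the construction, the clockwise-order conditions \textbf{iii-w} and \textbf{iii-b} follow from the behaviour of the distance labelling along the original edges (recall that the label increases by at most $1$ and decreases by an arbitrary nonnegative amount), and the root normalisation \textbf{iv} is imposed by step \textbf{(4)}. Combined with Lemma~\ref{lemma:welldef}, this shows that $\mathrm{Mob}$ sends Eulerian maps to $g$-mobiles, and dually one verifies that $\mathrm{Map}(\mathfrak{t})$ is an Eulerian map.

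Next comes the key reduction. Since $\mathrm{Mob}(\mathfrak{m})$, and likewise an abstract $g$-mobile $\mathfrak{t}$, is a $g$-tree, cutting $\mathcal{S}_g$ along it turns its unique face into a topological disk, whose boundary is the closed walk visiting the labelled corners and flags in counterclockwise order. Relative to this cyclic boundary sequence, the successor relation used in steps \textbf{(2)}--\textbf{(3)} of the reverse construction is exactly the planar successor relation of \cite{BDFG}: each labelled corner of label $n$ is joined to the first corner or flag of label $n-1$ met along the boundary, and each flag of label $n$ to the first corner or flag of label $n$. Inside the disk this is literally the planar closure, so the verification of $\mathrm{Map}(\mathrm{Mob}(\mathfrak{m}))=\mathfrak{m}$ and $\mathrm{Mob}(\mathrm{Map}(\mathfrak{t}))=\mathfrak{t}$ reduces to the corresponding planar identities, which I would quote from \cite{BDFG}. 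In the first direction one matches each edge erased in step \textbf{(3)} of the BDFG construction with the edge recreated by the successor rule, using that the distance labelling makes the successor of a corner coincide with the endpoint of the corresponding original edge; in the second direction one checks by the same local inspection that the Eulerian map produced by the closure has the prescribed mobile as its image.

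The only genuinely non-planar point is precisely the single-face property, and this is exactly what Lemma~\ref{lemma:welldef} provides: were the $g$-tree to bound more than one face, the counterclockwise boundary traversal would split into several cyclic sequences and the successor chains could fail to close up as they do in the plane. Granting a single face, every step of the analysis of \cite{BDFG} applies verbatim, since none of those steps uses planarity beyond the disk structure of the face in which the closure operates. I therefore expect the main---and essentially the only---obstacle to be conceptual rather than computational: convincing oneself that, once the $g$-tree property is known, the closure is an honestly planar operation taking place in a single disk, after which the statement is inherited directly from \cite{BDFG}.
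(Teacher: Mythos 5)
Your proposal is correct and follows essentially the same route as the paper: the paper also observes that all the genuinely non-planar work is concentrated in Lemma~\ref{lemma:welldef} (the $g$-tree property), after which the closure takes place inside the unique face --- a disk --- so that the planar arguments of \cite{BDFG} apply verbatim to give both identities. Your write-up merely makes explicit (cutting along the $g$-tree, identifying the successor relation with the planar one on the boundary walk) what the paper leaves as a remark before citing \cite{BDFG}.
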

This proves:
\begin{theorem}
\label{thm:bij}
The application $\mathrm{Mob}$ defines a bijection between the set of
rooted and pointed eulerian maps of genus $g$ with $n$ edges and the set of
$g$-mobiles with $n$ edges. This bijection sends a map which has $n_i$
white faces of degree $i$ for all $i$, $b$ black faces, and $v$
vertices to a mobile which has $n_i$ white unlabelled vertices of degree $i$ for
all $i$, $b$ black unlabelled vertices and $v-1$
labelled vertices.
\end{theorem}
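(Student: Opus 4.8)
The plan is to assemble the theorem from the two preceding lemmas, since the genuinely difficult content has already been established. By Lemma~\ref{lemma:welldef}, $\mathrm{Mob}(\mathfrak{m})$ is a $g$-tree for every rooted and pointed eulerian map $\mathfrak{m}$; and, as observed just before Definition~\ref{def:mobile}, a direct inspection of the construction rules of Figure~\ref{fig:BDFGrules} shows that $\mathrm{Mob}(\mathfrak{m})$ satisfies conditions \textbf{i}--\textbf{iv}, so that $\mathrm{Mob}$ is a well-defined application from rooted and pointed eulerian maps of genus $g$ to $g$-mobiles. Conversely, the reverse construction produces from any $g$-mobile a rooted and pointed eulerian map, the pointed vertex being the vertex of label $0$ reinserted at step \textbf{(1)}; so $\mathrm{Map}$ is a well-defined application in the other direction. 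Lemma~\ref{lemma:BDFG} states precisely that $\mathrm{Map}\circ\mathrm{Mob}$ and $\mathrm{Mob}\circ\mathrm{Map}$ are the respective identities, whence $\mathrm{Mob}$ is a bijection with inverse $\mathrm{Map}$. Restricting to objects with $n$ edges on both sides gives the first assertion.

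It remains to check that the announced statistics are transported correctly, all of which can be read off the local rules. Each edge $e$ of $\mathfrak{m}$ produces exactly one edge of $\mathrm{Mob}(\mathfrak{m})$: whether the label increases by $1$ along $e$ (a new edge from the white centre of the incident white face to the higher endpoint) or decreases (a new flagged edge between the two adjacent centres), precisely one edge is created, and step \textbf{(3)} erases all the original edges; thus both objects have $n$ edges, as was already used in the proof of Lemma~\ref{lemma:welldef}. Step \textbf{(2)} also inserts exactly one central vertex per face, coloured like the face, while the only original vertex discarded is $v_0$; hence the $b$ black faces give $b$ black unlabelled vertices and the $v$ vertices of $\mathfrak{m}$ give $v-1$ labelled vertices. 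Finally, the white unlabelled vertex at the centre of a white face $F$ is incident to exactly one edge (labelled or flagged) for each edge of $F$, so a white face of degree $i$ becomes a white unlabelled vertex of degree $i$, yielding the correspondence $n_i \leftrightarrow n_i$.

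I do not expect a genuine obstacle here: the nontrivial topological content---that erasing $v_0$ together with the original edges leaves a map of genus $g$ with a single face---was already settled in Lemma~\ref{lemma:welldef}, and the mutual inversion of $\mathrm{Mob}$ and $\mathrm{Map}$ in Lemma~\ref{lemma:BDFG}. The only point demanding care is the bookkeeping matching each feature of $\mathfrak{m}$ to its image in the mobile, in particular confirming that a white face and its central vertex share the same degree and that exactly one original vertex is lost; both follow at once from the case analysis summarised in Figure~\ref{fig:BDFGrules}.
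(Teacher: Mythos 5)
Your proposal is correct and takes essentially the same approach as the paper: the theorem is obtained there by combining Lemma~\ref{lemma:welldef}, the observation that the construction satisfies the mobile axioms of Definition~\ref{def:mobile}, and the mutual-inversion statement of Lemma~\ref{lemma:BDFG}, with the parameter correspondence read off the local rules (the edge and vertex counts already appear in the proof of Lemma~\ref{lemma:welldef}). Your explicit bookkeeping of the statistics simply fleshes out what the paper leaves implicit.
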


\subsection{$m$-constellations and $m$-hypermaps.}

Mobiles obtained from a $m$-hypermaps form a subset of the set of all mobiles,
and satisfy additionnal property. To keep the terminology reasonable, we make
the following convention:  \\
{\bf Convention:}
In the rest of the paper, the word \emph{mobile} will refer only to mobiles
which are associated to $m$-hypermaps of genus $g$ by the Bouttier-Di
Francesco-Guitter bijection.

Let $\mathfrak{m}$ be a rooted and pointed $m$-hypermap, with vertices labelled
by the distance from the pointed vertex. We define the \emph{increment} of an
(oriented) edge as the label of its endpoint minus the label of its origin ;
since all black faces have degree $m$, by the triangle inequality, all increments are in
$\llbracket -1, m-1 \rrbracket$. More, if if a black face is adjacent to an
edge $e$ of increment $m-1$, and since the sum of the increments is null along a
face, then its $m-1$ other edges must have type $-1$. Hence, the black
unlabelled vertex of the corresponding mobile has degree $1$: it is connected
only to the flagged edge corresponding to $e$.

Now, let $\mathfrak{t}$ be a mobile. The \emph{increment} of a flagged edge is
the increment of the associated edge in the corresponding $m$-hypermap: it is
therefore the difference of the labels of the two flags, clockwise around the
white unlabelled vertex. All black unlabelled vertices of degree $1$ are
linked to a flagged edge of increment $m-1$.
% We now erase all those black
%vertices of degree $1$, and the corresponding flagged edges of increment $m-1$:
%we obtain a tree $\bar\mathfrak{t}$ that we call the \emph{simplified mobile} 
%of $\mathfrak{t}$. Observe that $\mathfrak{t}$ can easily be reconstructed from
%$\bar\mathfrak{t}$: around any white unlabelled vertex, the positions where
%the black vertices must be re-inserted are precisely the ones where the label
%increases between two  clockwise-consecutive labels.

Now, observe that a $m$-hypermap is a $m$-constellation if and only if the
labelling of its vertices by the distance from the pointed vertex, taken modulo $m$, realizes
the property {\bf iii} of the definition of a constellation. Indeed, in a
$m$-constellation, the difference modulo $m$ between the distance labelling and
any labelling realizing property {\bf iii} is constant on a geodesic path of
oriented edges from the pointed vertex to any vertex, since both increase by
$1$ modulo $m$ at each step.
% Indeed, a
%$m$-hypermap is a $m$-constellation if and only if the length on any path of
%oriented edges linking two vertices $v$ and $w$ is congruated modulo $m$ to the
%difference of their labels by the distance.
Hence, all the edges of a $m$-constellation have an increment which is either
$-1$, either $m-1$. 
This gives:
\begin{lemma}
Let $\mathfrak{m}$ be a rooted and pointed $m$-hypermap, with vertices labelled
by the distance from the pointed vertex.
Then $\mathfrak{m}$ is a $m$-constellation
if and only if one of the following two equivalent properties holds:
\begin{itemize}
  \item all its edges have increment $-1$ or $m-1$
  \item all the black unlabelled vertices of its mobile have degree $1$
%  \item its simplified mobile has no black unlabelled vertices.
\end{itemize}
\end{lemma}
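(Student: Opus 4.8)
The plan is to establish the two equivalences stated in the lemma by leveraging the discussion that immediately precedes it. The statement asserts that for a rooted and pointed $m$-hypermap $\mathfrak{m}$ with distance labelling, three conditions are equivalent: $\mathfrak{m}$ is an $m$-constellation; all edges have increment $-1$ or $m-1$; and all black unlabelled vertices of the mobile $\mathrm{Mob}(\mathfrak{m})$ have degree $1$. I would prove the chain of implications
\[
(\text{constellation}) \Rightarrow (\text{increments in }\{-1,m-1\}) \Rightarrow (\text{black degrees }=1) \Rightarrow (\text{constellation}),
\]
so that all three become equivalent.

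First I would justify the implication from being a constellation to the increment condition. This is essentially contained in the paragraph preceding the lemma: if $\mathfrak{m}$ is an $m$-constellation, then property \textbf{iii} of Definition~\ref{def:constellation} provides a labelling in $\{1,\dots,m\}$ that increases by $1$ modulo $m$ along each oriented edge (since each black face reads $1,\dots,m$ clockwise and edges are oriented with a black face on the right). The distance labelling also increases by exactly $1$ modulo $m$ along any geodesic from the pointed vertex, so the difference of the two labellings modulo $m$ is constant, and hence the distance labelling itself increases by $1$ modulo $m$ along every edge. Combined with the already-established fact that every increment lies in $\llbracket -1,m-1\rrbracket$, an increment congruent to $1$ modulo $m$ in that range must be exactly $-1$ or $m-1$. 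This gives the first implication.

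Next I would handle the equivalence between the increment condition and the degree condition on black vertices, which is the combinatorial heart and where I expect the bookkeeping to require care. The preceding discussion already records that a black face adjacent to an edge of increment $m-1$ forces its other $m-1$ edges to have increment $-1$ (because increments sum to zero around a face and each lies in $\llbracket -1,m-1\rrbracket$), and that this corresponds exactly to a degree-$1$ black unlabelled vertex in the mobile. So if all increments are in $\{-1,m-1\}$, then around each black face exactly one edge has increment $m-1$ and the rest have increment $-1$, forcing every black vertex of the mobile to have degree $1$. Conversely, since black faces have degree $m$ and increments sum to zero, a black vertex can have degree $1$ in the mobile only when the unique flagged edge has increment $m-1$ and the remaining edges have increment $-1$; if \emph{every} black vertex has degree $1$, then every edge is either the single increment-$(m-1)$ edge of some black face or an increment-$(-1)$ edge, so all increments lie in $\{-1,m-1\}$. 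The main obstacle here is making precise the dictionary between the geometry of a black face in $\mathfrak{m}$ and the local structure at the corresponding black unlabelled vertex in the mobile; this is exactly the content already distilled in the paragraphs above the lemma and in Figure~\ref{fig:nocycle}, so I would cite that discussion rather than reprove it.

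Finally, to close the cycle I would show that the increment condition implies $\mathfrak{m}$ is a constellation, i.e. that property \textbf{iii} can be realized. Assuming all increments are in $\{-1,m-1\}$, I would define a candidate labelling $\ell(v) := d(v) \bmod m \in \{1,\dots,m\}$ (with representatives chosen in the right range) from the distance labelling $d$, and verify that around each black face the labels read $1,\dots,m$ clockwise. Since each black face has degree $m$ and, by the increment hypothesis together with the zero-sum constraint, is traversed by one increment-$(m-1)$ edge and $m-1$ increment-$(-1)$ edges, the distance labels decrease by $1$ at each of the $m-1$ steps and then jump up by $m-1$ once; modulo $m$ this means $\ell$ takes each of the $m$ residues exactly once around the face, in the correct clockwise order. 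This establishes property \textbf{iii} and hence that $\mathfrak{m}$ is an $m$-constellation, completing the equivalence.
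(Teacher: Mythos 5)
Your proposal is correct and follows essentially the same route as the paper: the lemma there is justified precisely by the discussion preceding it (constellation $\Leftrightarrow$ the distance labelling taken modulo $m$ realizes property \textbf{iii}, the zero-sum constraint on increments around each degree-$m$ black face, and the dictionary between flagged edges and black-vertex degrees in the mobile), which you have merely reorganized into an explicit chain of implications. The only blemish is a sign slip inherited from the paper's own loose conventions: if increments are normalized to lie in $\llbracket -1, m-1 \rrbracket$, then the labelling of property \textbf{iii} changes by $-1$ (not $+1$) modulo $m$ in the direction defining the increment, so the forced values are those congruent to $-1$, namely $-1$ and $m-1$ (as you state, though your claim ``congruent to $1$'' would literally single out the value $1$); under any consistent choice of convention your argument goes through unchanged.
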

In particular, $\mathfrak{m}$ is a $m$-constellation if and only if, clockwise
around any black face, the label increases by $1$ exactly $m-1$ times, and
decreases by $m-1$ exactly one time.

\section{The building blocks of mobiles: elementary stars and cells.}
\label{sec:stars}

\subsection{Elementary stars.}

\begin{figure}[h]
\centerline{\includegraphics[scale=1]{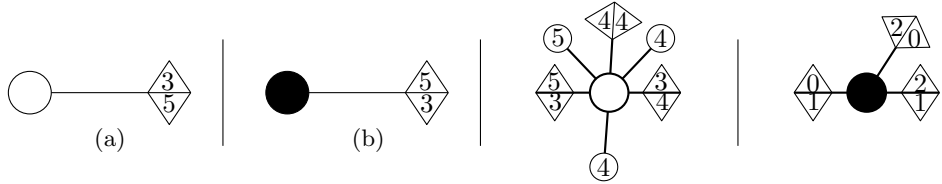}}
\caption{(a) a white split-edge of type $3$; (b) a black split edge of type
$3$ ; (c) a white elementary star; (d) a black elementary star.}
\label{fig:stars}
\end{figure}

We now define what the building blocks of mobiles are.

\begin{definition}(see Figure~\ref{fig:stars})
A \emph{white split-edge} is an edge that links a white
unlabelled vertex to a pair formed by two flags, each one lying on one side of the edge, as in
Figure~\ref{fig:stars}. Each flag is labelled by integer. If those labels are
$l_1$ and $l_2$, in clockwise order around the unlabelled vertex, the quantity
$l_2-l_1+1$ is called the \emph{type} of the split-edge. The same definition
holds for \emph{black split-edges}, but in this case the type is defined as
$l_1-l_2-1$.

A \emph{white elementary star} is a star formed by a central white unlabelled
vertex, which is connected to a certain number of labelled vertices, and to a
certain number of white split edges, and that satisfies the property {\bf
iii-w} of Definition~\ref{def:mobile}. Elementary stars are considered up to
translation of the labels. 

The same definition holds for black elementary stars, up to replacing ``white''
by ``black'' and property {\bf iii-w} by property {\bf iii-b}.
\end{definition}
The following lemma will be extremely usefull:
\begin{lemma}
\label{lemma:usefull}
Let $\mathfrak{s}$ be an elementary white star of degree $km$.
Assume that $\mathfrak{s}$ has $r$ split-edges, and let
$\tau_1,\tau_2,\ldots,\tau_r$ be their types. Then we have:
$$
\sum_{i=1}^r \tau_i = k m.
$$
\end{lemma}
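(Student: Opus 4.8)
The plan is to compute the total variation of the label as one walks clockwise around the central white unlabelled vertex of $\mathfrak{s}$, and to exploit the fact that after a full turn one returns to the starting corner, so that this total variation must vanish. The degree being $km$ means that the central vertex is incident to exactly $km$ items, each of which is either an edge towards a labelled vertex or one of the $r$ split-edges; I will show that passing each labelled neighbour forces the running label to decrease by exactly $1$, while passing the $i$-th split-edge makes it change by exactly $\tau_i-1$, and summing these contributions to zero will give the claim.

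More precisely, I would read off a cyclic sequence of labels by recording, clockwise around the white vertex, the label of each labelled neighbour and the labels $l_1,l_2$ (in clockwise order) of the two flags of each split-edge. Property {\bf iii-w} of Definition~\ref{def:mobile} governs the transitions: a labelled vertex of label $l$ is immediately followed by the label $l-1$, so crossing such a neighbour decreases the running label by $1$; and for a split-edge the two flags satisfy $l_2\ge l_1$ with the second flag followed by the label $l_2$, so the running label goes from $l_1$ up to $l_2=l_1+(\tau_i-1)$ and is then transmitted unchanged to the next item, a net change of $\tau_i-1$ by the very definition of the type of a white split-edge.

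Writing $p$ for the number of labelled neighbours and $r$ for the number of split-edges, the running label returns to its initial value after one full clockwise turn, whence $-p+\sum_{i=1}^r(\tau_i-1)=0$, i.e. $\sum_{i=1}^r\tau_i=p+r$. Since the degree of the star is $km$ and, by construction, the central vertex is incident precisely to its $p$ labelled neighbours and its $r$ split-edges, one has $p+r=km$, which yields $\sum_{i=1}^r\tau_i=km$. The only genuinely delicate point is the bookkeeping of the transitions dictated by {\bf iii-w}: one must keep track of the clockwise order of the two flags on a split-edge and note that the label is \emph{transmitted unchanged} past a split-edge but \emph{drops by one} past a labelled vertex. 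Once this is set up correctly, the argument is a one-line telescoping over the closed cycle.
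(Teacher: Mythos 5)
Your proof is correct and follows essentially the same route as the paper: both arguments use property \textbf{iii-w} to track the labels clockwise around the central white vertex and exploit the fact that the increments must sum to zero over the closed cycle, with each labelled neighbour contributing $-1$ and each split-edge of type $\tau_i$ contributing $\tau_i-1$. The paper merely packages this as a telescoping sum over the flag labels (counting the labelled vertices between consecutive flags as $l_i'-l_{i+1}$), which is the same computation in different notation.
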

\begin{proof}
We number the flags from $1$ to $r$, in clockwise order, starting anywhere. We
let $l_i$ and $l_i'$ be the labels carried by the $i$-th flag, in clockwise
order, so that the corresponding type is $\tau_i=l_i'-l_i+1$. By the
property {\bf iii-w}, the label decreases by one after each labelled vertex, so
that $l_{i}'-l_{i+1}$ is exactly the number of labelled vertices between the
$i$-th and $i+1$-th flags (with the convention that the $r+1$-th flag is the
first one). Hence the total degree of $\mathfrak{s}$ is: 
$$r+\sum_{i=1}^r (l_{i}'-l_{i+1}) = r+\sum_{i=1}^r (l_{i}'-l_{i}) = 
\sum_{i=1}^r \tau_i$$
which yields the result.
\end{proof}
\begin{remark}
If $\mathfrak{s}$ is a black elementary star which is present a mobile
$\mathfrak{t}$ such that $Map(\mathfrak{t})$ is a $m$-hypermap,
then the conclusion of the lemma also holds, with $k=1$. 
Indeed, if $l_1, .. , l_m$ is the clockwise sequence of the distance
labels around the corresponding black face of $Map(\mathfrak{t})$, then 
$\sum_{i=1}^m\tau_i = \sum_{i=1}^m (l_{i+1}-l_i+1)=m$.
\end{remark}

\begin{definition}
A $m$-walk of length $l$ is a $l$-uple of integers 
$(n_1,\ldots,n_l)\in\llbracket-1,m-1\rrbracket^l$ such that 
$\sum n_i=0$. A circular $m$-walk of length $l$ is a $m$-walk of length $l$
considered up to circular permutation of the labels (i.e. an orbit under the
action of the cyclic group $\mathbb{Z}_l$ on the indices).
\end{definition}

We now explain how to associate a $m$-walk to an elementary star.
Let $\mathfrak{s}$ be a white elementary star with degree multiple of $m$. We
read clockwise the sequence of labels vertices and split-edges around the central
vertex. We interpret labelled vertices as a number $-1$, and split-edges of type
$\tau$ as a number $\tau-1$. We obtain a sequence of integers $(n_1,\ldots,n_l)$
defined up to circular permutations.
\begin{lemma}\label{lemma:mwalk}
For each $l$ multiple of $m$, the construction above defines a bijection
between white elementary stars of degree $l$ and circular $m$-walks of length
$l$.
\end{lemma}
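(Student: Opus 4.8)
The plan is to write the forward map explicitly, check that it outputs a circular $m$-walk, and then invert it by recovering all labels from the partial sums of the walk. First I would check that the construction is well defined. A white elementary star $\mathfrak{s}$ of degree $l=km$ has its central vertex incident to exactly $l$ edges, each leading either to a labelled vertex or to a split-edge; writing $a$ for the number of labelled vertices and $r$ for the number of split-edges, we have $a+r=l$, so the output sequence $(n_1,\dots,n_l)$ has the correct length $l$. Each labelled vertex contributes $-1$, and each split-edge contributes $\tau-1$, which is exactly the increment $l_2-l_1$ of the corresponding flagged edge; by property {\bf iii-w} one has $l_2\geq l_1$, and the increment of a flagged edge lies in $\llbracket-1,m-1\rrbracket$, so every entry lies in $\llbracket-1,m-1\rrbracket$. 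By Lemma~\ref{lemma:usefull}, $\sum_i\tau_i=km$, hence the total weight is $-a+\sum_i(\tau_i-1)=-a+km-r=-(a+r)+km=0$, and $(n_1,\dots,n_l)$ is a genuine $m$-walk. Finally, translating all labels of $\mathfrak{s}$ changes no type, and reading clockwise from a different corner permutes the entries cyclically, so the assignment descends to a well defined map from stars (up to translation of labels) to circular $m$-walks.

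Next I would construct the inverse. Starting from a representative $(n_1,\dots,n_l)$, I place clockwise one marker per entry: a labelled vertex when $n_i=-1$, and a split-edge of type $n_i+1\in\llbracket1,m\rrbracket$ when $n_i\geq0$. To recover the labels I track, for each marker $M_i$, its \emph{incoming label} $f_i$ (the label of the vertex, resp.\ of the first flag). Inspecting the three transition cases of property {\bf iii-w} shows that in every case $f_{i+1}=f_i+n_i$, so the incoming labels are the partial sums $f_i=f_1+\sum_{j<i}n_j$, while the second flag of a split-edge $M_i$ receives label $f_i+n_i$. Fixing $f_1$ then determines all labels, and the reconstructed star satisfies {\bf iii-w} by construction.

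The hard part, and the only place the hypotheses really bite, is the consistency of this reconstruction around the cyclic sequence: the incoming label must return to its initial value after a full turn, i.e.\ $f_{l+1}=f_1+\sum_{j=1}^l n_j=f_1$, which holds precisely because $\sum_j n_j=0$ --- the sum-zero property supplied by Lemma~\ref{lemma:usefull}. The remainder is bookkeeping of the two quotients: the free choice of $f_1$ matches the identification of stars up to translation of labels, while a cyclic shift of the walk corresponds to a rotation of the star, so circular $m$-walks correspond to stars up to translation. That the two maps are mutually inverse is then immediate from $f_{i+1}=f_i+n_i$, since reading the reconstructed star clockwise returns the same entries $n_i$, which completes the bijection.
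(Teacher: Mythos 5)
Your proof is correct and takes essentially the same route as the paper's: the forward direction via property \textbf{iii-w} and Lemma~\ref{lemma:usefull} (sum of types equals degree, giving the sum-zero condition), and the inverse by interpreting steps $-1$ as labelled vertices and steps $\tau-1$ as split-edges of type $\tau$, with the labels recovered as partial sums. The paper's own proof is a much terser version of exactly this argument; your explicit tracking of incoming labels and of the two quotients (translation of labels, cyclic rotation) just fills in details it leaves implicit.
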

\begin{proof}
It follows from the property {\bf iii-w} that the walk associated to a white
elementary star is indeed a $m$-walk. Conversely, given a $m$-walk of length
$l$, and interpreting steps $-1$ as labelled vertices, and steps $\tau-1$ as
split-edges of type $\tau$, one reconstructs an elementary white star, which is
clearly the only one from which the construction above recovers the original
walk.
\end{proof}

\begin{definition}
We say that a split-edge is \emph{special} if its type is not equal to $m$. A
star is \emph{special} if it contains at least one split-edge, and
\emph{standard} otherwise.
\end{definition}

\subsection{Cells and chains of type $0$.}

\begin{figure}[h]
\centerline{\includegraphics[scale=0.8]{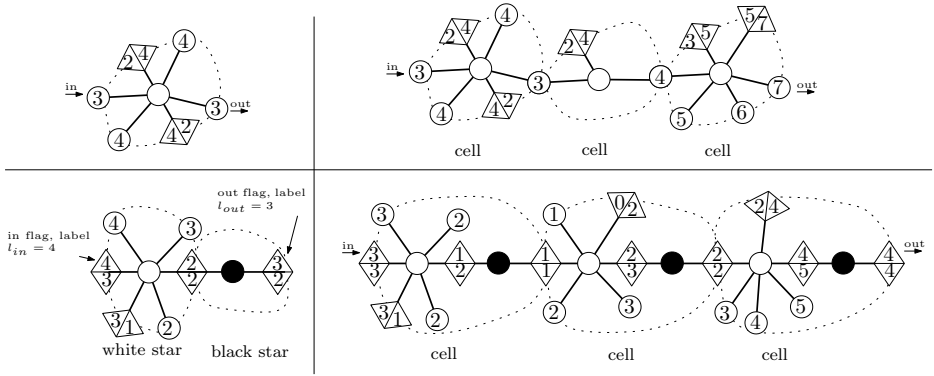}}
\caption{Four examples in the case $m=3$. Up, a cell of type $0$, and a chain of
type $0$ ; down, a cell of type $1$, and a chain of type $2$.}
\label{fig:chains}
\end{figure}

\begin{definition}(see Figure~\ref{fig:chains})
A \emph{cell of type $0$} is a standard elementary white star of degree multiple
of $m$, which carries two distinguished labelled vertices: 
the \emph{in} one and the \emph{out} one.
\\ The \emph{increment} of a cell of
type $0$ is the difference $l_\outv-l_\inv$ of the labels of its out and in vertices. Its \emph{size} is its number of 
split-edges, and its \emph{total degree} is its degree as an elementary star
(i.e. the degree of the central vertex).

A \emph{chain of type $0$} is a finite sequence of cells type $0$. Its
size and increment are defined additively from the size and increment of the
cells it contains. Its \emph{in} vertex (resp \emph{out} vertex) is the in
vertex of its first cell (resp. out vertex of its last cell).
\end{definition}
On pictures, to draw a chain of type $0$, we identify the out vertex of each
cell with the in vertex of the following one, as in Figure~\ref{fig:chains}.
Observe that, from the previous lemma, the total degree of a cell of type $0$ equals $m$ times its size.
Consequently, the total number of corners of the chain adjacent to a
labelled vertex equals $(m-1)$ times its size.

\subsection{Cells and chains of type $\tau\in\llbracket 1,m-1 \rrbracket$.}

\begin{definition}(see Figure~\ref{fig:chains})
Let $\tau\in\llbracket 1,m-1 \rrbracket$.
A \emph{cell of type $\tau$} is a pair
$(\mathfrak{s}_1,\mathfrak{s}_2)$ where:
\\{\bf -} $\mathfrak{s}_1$ is an elementary white star, with exactly two
special split-edges: the \emph{in} one, of type $\tau$, and the \emph{out}
one, of type $m-\tau$.
\\{\bf -} $\mathfrak{s}_2$ is an elementary black star, with exactly two
special split-edges: the \emph{in} one, of type $m-\tau$, and the \emph{out}
one, of type $\tau$.

 On pictures, we identify the two split-edges of type $m-\tau$, as in
Figure~\ref{fig:chains}. 
The \emph{in} split-edge of the cell is the in
split-edge of $\mathfrak{s}_1$, and its \emph{out} split-edge is the out
split-edge of $\mathfrak{s}_2$ ; the corresponding labels $l_\inv$ and
$l_\outv$ are defined with the convention of Figure~\ref{fig:chains}. The
\emph{increment} of the cell is the difference $l_\outv$ - $l_\inv$.

A chain of type $\tau$ is a finite sequence $\mathfrak{c}$ of cells of type
$\tau$. On pictures, we glue the flags of the out split-edge of a cell with the
flags of the in split-edge of the following cell, as in Figure~\ref{fig:chains}. The
\emph{increment} of the chain is the sum of the increment of the cells it
contains. We let $|\mathfrak{c}|$ denote the total number of labelled vertices
appearing in $\mathfrak{c}$. We also let $\langle\mathfrak{c}\rangle$ be the
total number of black vertices appearing in $\mathfrak{c}$ plus its total
number of split-edges of type $m$ (equivalently, $\langle
\mathfrak{c}\rangle$ is the total number of black vertices of $\mathfrak{c}$ if
one links each split-edge of type $m$ to a new univalent black vertex).
\end{definition}

\section{The full scheme of a mobile.}

In this section, we explain how to reduce mobiles of genus $g$ to a finite
number of cases, indexed by minimal objects called their \emph{full schemes}.
This is a generalisation of \cite{ChMaSc}.

\subsection{Schemes.}

\begin{definition}
A \emph{scheme of genus $g$} is a rooted map $\mathfrak{s}$ of genus $g$, which
has only one face, and whose all vertices have degree $\geq 3$. The set of schemes of genus
$g$ is denoted $\mathcal{S}_g$.
\end{definition}
Let $\mathfrak{s}$ be a scheme of genus $g$, and, for all $i\geq 3$, let $n_i$
be the number of vertices of $\mathfrak{s}$ of degree $i$. Then, by the
hand-shaking lemma, its number of edges is $\frac{1}{2}\sum in_i$, and
Euler Characteristic formula gives:
\begin{eqnarray}\label{eq:Eulerscheme}
\sum_{i\geq 3}\frac{i-2}{2}n_i = 2g-1.
\end{eqnarray}
Hence the sequence $(n_i)_{i\geq 3}$ can only take a finite number of values.
Since the number of maps with a given degree sequence is finite, this proves:
\begin{lemma}\cite{ChMaSc}
The set $\mathcal{S}_g$ of all schemes of genus $g$ is finite.
\end{lemma}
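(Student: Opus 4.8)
The plan is to bound the size of any genus-$g$ scheme via Euler's formula together with the degree-$\geq 3$ condition, and then to invoke the finiteness of maps of bounded size. First I would fix a scheme $\mathfrak{s}$ of genus $g$, with $v$ vertices, $e$ edges and (by definition) a single face. Euler characteristic formula then gives $v - e + 1 = 2 - 2g$, that is $e = v + 2g - 1$. Writing $n_i$ for the number of vertices of degree $i$, the hand-shaking lemma gives $2e = \sum_{i\geq 3} i\,n_i$ and $v = \sum_{i\geq 3} n_i$; substituting these into the Euler relation and rearranging yields exactly $\sum_{i\geq 3}\frac{i-2}{2}n_i = 2g-1$, i.e.\ relation~(\ref{eq:Eulerscheme}).

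The key observation is that, because every vertex of a scheme has degree $i\geq 3$, every coefficient $\frac{i-2}{2}$ appearing on the left-hand side is at least $\frac{1}{2}$. Hence $\frac{1}{2}\sum_{i\geq 3}n_i \leq 2g-1$, which bounds the number of vertices by $v\leq 4g-2$; likewise any single nonzero term forces $\frac{i-2}{2}\leq 2g-1$, so no degree exceeds $4g$. Therefore the degree sequence $(n_i)_{i\geq 3}$ takes only finitely many values, and in particular $e = v + 2g-1 \leq 6g-3$ is bounded.

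Finally I would conclude by invoking the topological fact recalled in the introduction, namely that up to orientation-preserving homeomorphism there are only finitely many maps on $\mathcal{S}_g$ with a prescribed number of edges (equivalently, a prescribed degree sequence). Since rooting a map amounts to choosing one of its at most $2e$ oriented edges, the number of rooted schemes with a fixed admissible degree sequence is finite; summing over the finitely many admissible sequences shows that the set $\mathcal{S}_g$ is finite.

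I do not anticipate a genuine obstacle here: the argument is essentially bookkeeping with Euler's formula, and the only non-elementary input—finiteness of maps of bounded size—is assumed from the outset. The one point that must be handled with care is the strict positivity of the coefficients in~(\ref{eq:Eulerscheme}), which is precisely what the hypothesis ``all vertices have degree $\geq 3$'' guarantees and which is what turns the single linear relation into a finite bound on both $v$ and $e$.
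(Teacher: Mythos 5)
Your proof is correct and follows essentially the same route as the paper: deriving the relation $\sum_{i\geq 3}\frac{i-2}{2}n_i = 2g-1$ from Euler's formula and the hand-shaking lemma, using the degree-$\geq 3$ hypothesis to bound the degree sequence, and then invoking the finiteness of maps with a given degree sequence. The explicit bounds $v\leq 4g-2$ and $e\leq 6g-3$ and the remark about rooting are welcome extra detail, but nothing in substance differs from the paper's argument.
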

We now need a technical discussion that will be of importance later. We assume
that each scheme of genus $g$ carries an arbitrary orientation and labelling of
its edges, chosen arbitrarily but fixed once and for all. This will allow us to
talk about ``the $i$-th edge'' of a scheme, or ``the canonical orientation'' of
an edge, without more precision.
Our first construction is not specific to mobiles, and applies to all maps of
genus $g$ with one face (see Figure~\ref{fig:scheme}:
\begin{algorithm}[The scheme of $g$-tree
$\mathfrak{t}$.] Let $\mathfrak{t}$ be a $g$-tree. First, if $\mathfrak{t}$
contains a vertex of degree $1$, we erase it, together with the edge it is connected to. We then
repeat this step recursively until there are no vertices of degree $1$ left. We
are left with a map $\mathfrak{c}$, which we call \emph{the core of
$\mathfrak{t}$}. If the original root of $\mathfrak{t}$ is still present in the
core, we keep it as the root of $\mathfrak{c}$. Otherwise, the root is present
in some subtree of $\mathfrak{t}$ which is attached to $\mathfrak{c}$ at some
vertex $v$: we let the root of $\mathfrak{c}$ be the first edge 
of $\mathfrak{c}$ encountered after that subtree when
turning clockwise around $v$ (and we orient it
leaving $v$).

Now, in the core, vertices of degree $2$ are organised into maximal
paths connected together at vertices of degree at least $3$. We now replace
each of these paths by an edge: we obtain a map $\mathfrak{s}$, which has only
vertices of degree $\geq 3$. The root of $\mathfrak{s}$ is the
edge  corresponding to the path that was carrying the root of $\mathfrak{c}$
(with the same orientation). We say that $\mathfrak{s}$ is \emph{the scheme of
$\mathfrak{t}$}. The vertices of $\mathfrak{t}$ that remain vertices of
$\mathfrak{s}$ are called \emph{the nodes of $\mathfrak{t}$}.
\end{algorithm}

\begin{figure}[h]
\centerline{\includegraphics[scale=0.8]{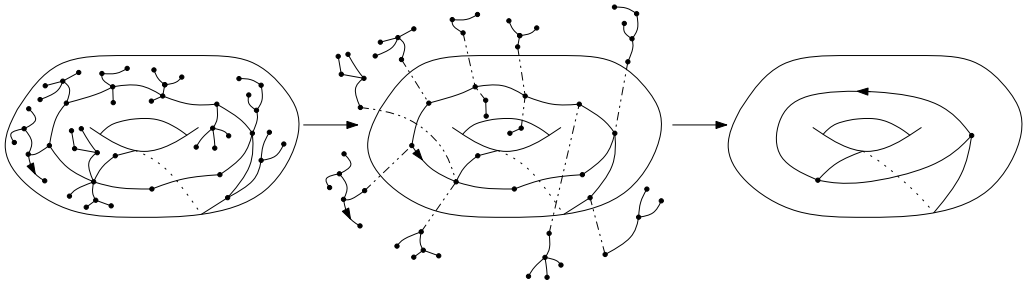}}
\caption{From a $1$-tree to its scheme.}
\label{fig:scheme}
\end{figure}

\subsection{The superchains of a mobile.}

Let $\mathfrak{t}$ be a mobile whose scheme $\mathfrak{s}$ has $k$ edges. Each
edge of $\mathfrak{s}$ corresponds to a path of vertices of degree $2$ of the core. For $i=1..k$, we let $\mathfrak{p}_i$ be
the path corresponding to the $i$-th edge of
$\mathfrak{s}$, oriented by the canonical orientation of this edge (observe
that each node is the extremity of several paths). A priori, $\mathfrak{p}_i$
can contain labelled vertices, black or white unlabelled vertices, and flagged or
unflagged edges. We have the following important lemma:
\begin{lemma}
\label{lemma:nospecial}
All the special flagged edges of $\mathfrak{t}$ lie on the paths
$\mathfrak{p}_i$, $i=1..k$.
\end{lemma}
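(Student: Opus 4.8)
The plan is to show that every flagged edge destroyed while extracting the core is standard, so that the dangling subtrees can only contain standard flagged edges; the special ones are then forced onto the core, whose edge set is precisely the union of the paths $\mathfrak{p}_1,\dots,\mathfrak{p}_k$ (each edge of the core belongs to exactly one maximal degree-$2$ path, with the extreme edges of a path incident to the nodes). So it suffices to prove that no special flagged edge lies outside the core.

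First I would record two elementary facts. A white unlabelled vertex has degree equal to the degree of its white face, a positive multiple of $m$, hence at least $m\ge 2$; thus white unlabelled vertices are never leaves, and the vertices deleted at each step of the core-extraction are always labelled vertices or black unlabelled vertices. Deleting a labelled leaf removes an edge to a white vertex, never a flagged edge, so only black leaves can cause a flagged edge to disappear. Next, by the Remark following Lemma~\ref{lemma:usefull}, around a black face the increments lie in $\llbracket -1,m-1\rrbracket$ and sum to $0$, and the non-flagged incident edges are exactly those of increment $-1$. If the black vertex has degree $1$, then $m-1$ increments equal $-1$ and the remaining one equals $m-1$, so its unique flagged edge has type $m$ and is standard. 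If instead the black vertex has degree $d\ge 2$, its $d$ nonnegative flagged increments sum to $m-d$; were one of them equal to $m-1$, the other $d-1$ would sum to $1-d<0$, which is impossible. Hence a flagged edge is special if and only if its black endpoint has degree $\ge 2$.

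Combining these two facts, the whole lemma reduces to the single assertion that every black unlabelled vertex of degree $\ge 2$ belongs to the core, i.e. that a black vertex lying in a dangling subtree $T$ (a connected component of $\mathfrak{t}$ minus its core, attached along one node) has degree $1$. The decisive structural point is that $T$ is contractible: all of the genus is carried by the cycles of the core, so $T$ is a genuine planar tree glued to the rest of the mobile at a single vertex. Running the reverse construction of Section~\ref{sec:bij} inside the unique face of $\mathfrak{t}$, the successors of the labelled corners and flags of $T$ stay inside the disk-like region bounded by $T$, so the piece of hypermap reconstructed from $T$ is planar. By the fact recalled after Definition~\ref{def:constellation}, a planar $m$-hypermap is automatically an $m$-constellation, whence every edge of this piece has increment $-1$ or $m-1$; the flagged edges, having nonnegative increment, must then all have increment $m-1$, i.e. be standard, which is the desired contradiction.

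I expect the main obstacle to be precisely this contractibility-to-locality step: one must check that restricting the closure to $T$ produces no label monodromy, so that the successor of each flag or labelled corner of $T$ again lies in $T$ and the reconstructed region is honestly planar; concretely, that travelling along the portion of the face-contour bounding $T$ returns the labels to their starting values, which is where the absence of a handle inside $T$ is used. A fully combinatorial alternative that sidesteps the topological reconstruction would be a leaves-up induction: take a deepest black vertex $b$ of degree $\ge 2$ in $T$, so that all strictly deeper black vertices have degree $1$ and hence standard edges, and then telescope the clockwise conditions \textbf{iii-w} and \textbf{iii-b} around the complete contractible branches hanging below $b$ (using Lemma~\ref{lemma:usefull} to pin down their total degrees) to force the edge above $b$ to be standard as well, contradicting the characterisation above. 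Either way, the heart of the matter is to convert contractibility of $T$ into the local constellation property.
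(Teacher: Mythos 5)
There is a genuine gap, and it occurs twice: once in your reduction, once in the argument you build on top of it. Your characterisation of special flagged edges (special if and only if the black endpoint has degree $\geq 2$ in $\mathfrak{t}$) is correct --- it is a sharpened form of the remark following Lemma~\ref{lemma:usefull}. But your first ``elementary fact'' is false: white unlabelled vertices are never leaves of $\mathfrak{t}$ itself, yet the core extraction is \emph{iterative}, and a white vertex whose branches have all been stripped does become a leaf at a later stage and is then erased together with its last remaining edge, which may perfectly well be flagged. (The same conflation affects black vertices: a black vertex of $\mathfrak{t}$-degree $3$ becomes a leaf during the extraction once two of its edges are gone; only your reduction target repairs this, not the white case.) Consequently ``only black leaves can cause a flagged edge to disappear'' is unjustified, and the assertion you reduce to --- every black vertex of degree $\geq 2$ lies in the core --- is strictly weaker than Lemma~\ref{lemma:nospecial}: it does not exclude a special flagged edge whose black endpoint sits \emph{on} the core while its white endpoint hangs inside a dangling subtree all of whose black vertices have degree $1$. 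What kills that configuration is the white-vertex counterpart of your characterisation, which is the engine of the paper's proof: by Lemma~\ref{lemma:usefull} the types of the split-edges around a white unlabelled vertex sum to $0$ modulo $m$, so a white vertex cannot be incident to \emph{exactly one} special flagged edge. Your proposal never uses this fact, and it cannot be dispensed with.

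Second, the topological argument for your key assertion does not work, and the obstacle you flag yourself is fatal rather than technical. Successors are taken along the contour of the unique face of the \emph{whole} mobile, since the labels are global distances from the pointed vertex, which lies outside $T$; nothing confines the arcs issued from $T$ to a region bounded by $T$ (for instance, if the minimal label of $T$ is attained at a labelled corner, the successor of that corner carries a strictly smaller label and therefore lies outside $T$). Moreover, even granting self-containment, what is rebuilt from $T$ is a piece of a hypermap, not a hypermap --- the face along which $T$ attaches has uncontrolled colour and degree --- so the planar implication ``{\bf (i)} and {\bf (ii)} imply {\bf (iii)}'' cannot be invoked. Your fallback sketch is the viable route, but once the white sum rule above is inserted it essentially \emph{is} the paper's proof: the paper takes a special edge in a dangling subtree and, using that no unlabelled vertex of either colour meets exactly one special edge, finds a second special edge at the endpoint farthest from the core, and iterates, producing infinitely many distinct special edges inside a finite tree --- a contradiction. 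So the correct completion of your proposal converges to the argument of the paper rather than providing an alternative to it.
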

\begin{proof}
Assume that there is a special flagged edge $e_0$ in
$\mathfrak{t}\setminus\mathfrak{c}$: $e_0$ belongs to a subtree $\mathfrak{t}'$ 
that has been detached from $\mathfrak{t}$ during the construction of its core.
$e_0$ is connected to two unlabelled vertices, one of them, say $v$, being the
farthest from $\mathfrak{c}$. Now, by Lemma~\ref{lemma:usefull}, an unlabelled vertex (black or white) of
$\mathfrak{t}$ cannot be connected to exactly one special flagged edge. Hence
$v$ is connected to another special edge $e_1$. Repeating recursively this
argument, one constructs an infinite sequence of special edges
$e_0,e_1,\ldots$. All these special edges belong to the subtree
$\mathfrak{t'}$, so that the sequence cannot form a cycle: this implies that
these edges are all distinct, which is impossible since a mobile has a finite
number of edges.
\end{proof}

\begin{figure}[h]
\centerline{\includegraphics[scale=1]{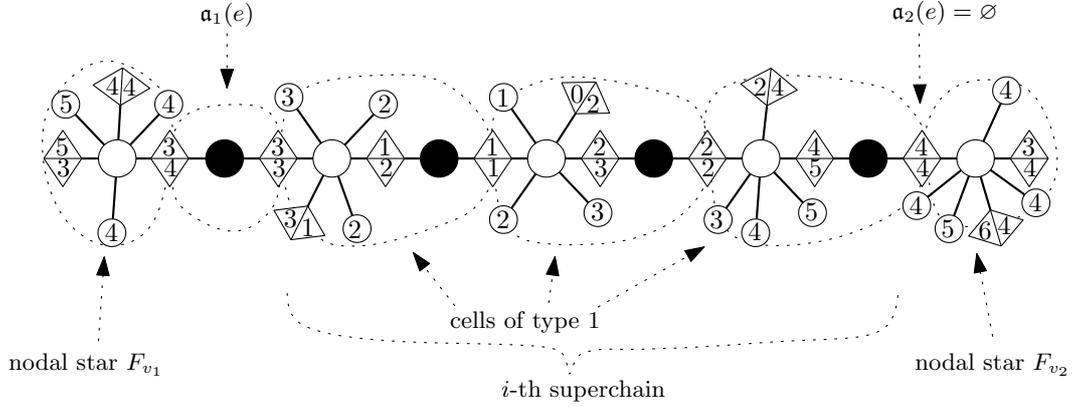}}
\caption{A typical superchain of type $2$, in the case $m=3$. It has two nodal
star, only one correcting term $\mathfrak{a}_1(e)$ ($\mathfrak{a}_2(e)$ is empty), and the
superchain itself is formed of three consecutive cells of type $1$.}
\label{fig:superchain}
\end{figure}

Each unlabelled vertex of $\mathfrak{p}_i$ was, in the
original mobile  $\mathfrak{t}$, at the center of an elementary star. We now
re-draw all these elementary stars around each unlabelled vertex of $\mathfrak{p}_i$, 
as on Figure~\ref{fig:superchain}. If the extremities of $\mathfrak{p}_i$ are
unlabelled vertices, we say that the corresponding stars are \emph{nodal stars}
of $\mathfrak{t}$.  For the moment, we remove the nodal stars, if they exist: we
obtain a (eventually empty) sequence $\mathfrak{s}_1,\ldots,\mathfrak{s}_l$ of
successive stars. We now have to distinguish two cases.

\subsubsection*{case 1: $\mathfrak{p}_i$ contains no special flagged edge.}
In this case, $\mathfrak{p}_i$ is made of succession of edges linking white
unlabelled vertices to labelled vertices (since the only remaining case,
flagged edges of type $m$, are only linked to univalent black vertices and then
cannot be part of the core). Consequently, the sequence
$\mathfrak{s}_1,\ldots,\mathfrak{s}_l$ is a sequence of white
elementary stars, with no special flagged edges, glued together at labelled
vertices, i.e. a chain of type $0$ in the terminology of the
preceding section. 
We say that $(\mathfrak{s}_1,\ldots,\mathfrak{s}_l)$ is the $i$-th superchain of $\mathfrak{t}$.

\subsubsection*{case 2: $\mathfrak{p}_i$ contains at least one special flagged
edge} 
In this case, we will also show that our path reduces to a sequence
of cells. 
First, from Lemma~\ref{lemma:usefull}, an unlabelled vertex cannot be adjacent
to exactly one special edge. Now, from Lemma~\ref{lemma:nospecial}, an unlabelled
vertex of $\mathfrak{p}_i$ which is not one of its extremities cannot be
adjacent to more than $2$ special edges in $\mathfrak{t}$. Hence such a vertex
is adjacent either to $0$ or $2$ special edges. Hence the set of special
flagged edges of $\mathfrak{p}_i$ forms itself a path with the same extremities
as $\mathfrak{p}_i$, i.e. is equal to $\mathfrak{p}_i$. In other terms: all the
edges of $\mathfrak{p}_i$ are special flagged edges.\\
We now consider the sequence of stars $\mathfrak{s}_1,\ldots,\mathfrak{s}_l$.
If the first star of the sequence is black, we call it
$\mathfrak{a}_1(i)$ and we remove it (otherwise we put formally
$\mathfrak{a}_1(i)=\varnothing$). Similarly, if the last star is white, we call
it $\mathfrak{a}_2(i)$ and we remove it. We now have a sequence of alternating color stars $(\mathfrak{s}'_1,\ldots,\mathfrak{s}'_{l'})$ that
begins with a white star and ends with a black one. From what we just said, all these stars are elementary
stars with exaclty two special flagged edges, glued together at these flagged
edges. Since the sequence is ordered, we can talk of the ingoing and outgoing
special edge of each of these stars. Now, let $\tau$ be the type of the ingoing
special edge of $\mathfrak{s}'_1$. By Lemma~\ref{lemma:usefull}, the type of
its outgoing special edge is $m-\tau$. Now, this flagged edge is also the
ingoing edge of the black star $\mathfrak{s}'_2$, and applying
Lemma~\ref{lemma:usefull} again, the type of the outgoing edge of
$\mathfrak{s}'_2$ is $m-(m-\tau)=\tau$. Consequently,
$(\mathfrak{s}'_1,\mathfrak{s}'_2)$ is a cell of type $\tau$, in the sense of
the previous section. Applying recursively the argument, each pair
$(\mathfrak{s}'_{2q-1},\mathfrak{s}'_{2q})$ is a cell of type $\tau$. The
sequence $(\mathfrak{s}'_1,\ldots,\mathfrak{s}'_l)$ is therefore a chain of type
$\tau$, which we call the $i$-th superchain of $\mathfrak{t}$.

In the two cases above, we have associated to the $i$-th edge $e$ of
$\mathfrak{s}$ a chain, which we called the $i$-th superchain of $\mathfrak{t}$.
We now define the \emph{type} of $e$ as the type of this chain, and we note it
$\tau(e)$. 

By convention, if the $i$-th edge has type $0$, we put
$\mathfrak{a}_1(i)=\mathfrak{a}_2(i)=\varnothing$.

\subsection{Typed schemes and the Kirchoff law.}
\label{subsec:Kirchoff}
Let $v$ be a node of $\mathfrak{t}$. If $v$ is labelled, then it is
connected to no flagged edge (since flagged edges only connect
unlabelled vertices). Hence all the paths $\mathfrak{p}_i$'s that are meeting at $v$
correspond to case 1 above, or equivalently, all the edges of $\mathfrak{s}$
meeting at $v$ are edges of type $0$.

On the contrary, assume that $v$ is unlabelled. Let $e$ be an edge of
$\mathfrak{s}$ adjacent to $v$, of type $\tau(e)\neq0$.
and let $\mathfrak{p}_i$ be the corresponding path of the core. We let $\tilde\tau(e)$
be the type of the split-edge of $\mathfrak{p}_i$ which is adjacent to $v$.
It follows from the construction rules of the scheme that if $v$ is
\emph{white}, then one has $\tilde\tau(e)=\tau(e)$ if $e$ is incoming at $v$
and $\tilde\tau(e)=m-\tau(e)$ if it is outgoing. On the contrary, if $v$ is
\emph{black} then $\tilde\tau(e)=m-\tau(e)$ if $e$ is incoming and
$\tilde\tau(e)=\tau(e)$ if it is outgoing. Now, in both cases, 
Lemma~\ref{lemma:usefull} or the remark following it give: $\sum_{e\sim v}
\tilde\tau(e)=0\mbox{ mod }m$. 

In all cases, we have therefore:
\begin{proposition}[Kirchoff law]\label{prop:Kirchoff}
Let $v$ be a vertex of $\mathfrak{s}$. We have:
\begin{eqnarray}\label{eq:kirchoff}
\sum_{e \scriptstyle \ outgoing}\tau(e) - \sum_{e
 \ \scriptstyle ingoing}\tau(e) =0 \ \mathrm{mod} \ m
\end{eqnarray}
\end{proposition}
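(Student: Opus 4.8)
The plan is to fix a vertex $v$ of the scheme $\mathfrak{s}$ and argue according to whether the corresponding node of the mobile $\mathfrak{t}$ is labelled or unlabelled: the labelled case is immediate, while the unlabelled case reduces, after a short bookkeeping, to Lemma~\ref{lemma:usefull}.

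First I would dispose of the labelled case. A labelled node carries no flagged edge, since flagged edges only join unlabelled vertices; hence every path $\mathfrak{p}_i$ emanating from $v$ falls into case~1 and the associated edge of $\mathfrak{s}$ has type $0$. Both sums in $(\ref{eq:kirchoff})$ are then empty sums of zeros, so the congruence holds trivially.

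The substance lies in the unlabelled case. Here $v$ is the centre of an elementary star $\mathfrak{s}_v$, white or black. I would first note that every edge $e$ of $\mathfrak{s}$ incident to $v$ has nonzero type, since a type-$0$ edge comes from a chain of type $0$ whose in- and out-vertices are labelled, forcing both of its scheme-endpoints to be labelled nodes. Thus each incident $e$ comes from a path $\mathfrak{p}_i$ of case~2, which starts at $v$ with a special flagged edge; write $\tilde\tau(e)$ for the type of that split-edge of $\mathfrak{s}_v$. By Lemma~\ref{lemma:nospecial} every remaining split-edge of $\mathfrak{s}_v$ belongs to a subtree pruned during the core construction, hence is non-special, i.e. of type $m$. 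Summing the types of all split-edges of $\mathfrak{s}_v$ and applying Lemma~\ref{lemma:usefull} (for white $v$) or the remark following it (for black $v$) produces a multiple of $m$; since the type-$m$ contributions are already $\equiv 0$, I obtain $\sum_{e\sim v}\tilde\tau(e)\equiv 0 \ (\mathrm{mod}\ m)$.

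It then remains to translate $\tilde\tau(e)$ back into $\tau(e)$ through the orientation conventions: for a white node $\tilde\tau(e)=\tau(e)$ when $e$ is incoming and $\tilde\tau(e)=m-\tau(e)$ when outgoing, while for a black node the two roles are exchanged. Substituting into the congruence above and discarding the multiples of $m$ yields $\sum_{e\ \mathrm{outgoing}}\tau(e)-\sum_{e\ \mathrm{ingoing}}\tau(e)\equiv 0 \ (\mathrm{mod}\ m)$ in both colours, the sign flip between white and black being invisible modulo $m$. I expect the only delicate point to be precisely this last dictionary: one must justify the incoming/outgoing correspondence between $\tilde\tau$ and $\tau$ from the definition of a cell of type $\tau$ (in-split-edge of type $\tau$, out-split-edge of type $m-\tau$) together with the fixed canonical orientation of the edges of $\mathfrak{s}$; once that is settled the congruence is purely formal.
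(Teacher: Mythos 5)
Your overall strategy is exactly the paper's: split on whether the node is labelled or unlabelled, reduce the unlabelled case to Lemma~\ref{lemma:usefull} (with the remark following it for black stars), and finish with the dictionary $\tilde\tau(e)=\tau(e)$ or $m-\tau(e)$ according to orientation and colour. However, your proof contains one false intermediate claim: that every edge of $\mathfrak{s}$ incident to an unlabelled node has nonzero type. This is wrong for white nodes. A chain of type $0$ does have labelled in- and out-vertices, but when the corresponding path $\mathfrak{p}_i$ ends at an unlabelled node $v$, the chain attaches to one of the \emph{distinguished labelled vertices of the nodal star} $F_v$, not to $v$ itself; the scheme-endpoint of that type-$0$ edge is then the unlabelled centre of $F_v$. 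The paper's later classification of nodes makes this explicit: vertices of type {\bf 1} (three incident type-$0$ edges) and of type {\bf 2} (two special edges and one type-$0$ edge) can both be decorated by a white elementary star, i.e.\ be unlabelled. (For black nodes your claim does hold: black vertices meet only flagged edges, and non-special flagged edges, being of type $m$, are attached to univalent black vertices, which cannot be nodes.)

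Fortunately the error is inessential to the conclusion: edges of type $0$ contribute $0$ to both sums in Equation~\ref{eq:kirchoff}, so one may simply restrict the argument to the incident edges of type $\neq 0$ --- these are exactly the ones matched with the special split-edges of $F_v$ --- and your computation (Lemma~\ref{lemma:usefull}, the type-$m$ split-edges contributing $0$ modulo $m$, then the orientation dictionary) goes through verbatim. This restriction is precisely what the paper does, considering only edges $e$ adjacent to $v$ with $\tau(e)\neq 0$. So: right approach, correct conclusion, but the claim that all incident edges are special must be dropped (together with its incorrect justification) rather than used.
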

This leads to the following definition:
\begin{definition}
Let $\mathfrak{s}$ be a scheme of genus $g$. A \emph{typing of $\mathfrak{s}$} 
is an
application $\tau : \{\mbox{edges of }\mathfrak{s}\}\rightarrow \llbracket 0 ,
m-1 \rrbracket$ that satisfies Equation~\ref{eq:kirchoff} around each vertex.\\
A \emph{typed scheme} is a pair $(\mathfrak{s},\tau)$ formed by a scheme and one
of its typings.\\
If $\mathfrak{s}$ is the scheme of a mobile $\mathfrak{t}$, and $\tau$ is
the application that associates to each edge of $\mathfrak{s}$ the type of its
corresponding superchain, we say that $(\mathfrak{s},\tau)$ is \emph{the typed
scheme of $\mathfrak{t}$}.
\end{definition}
For future reference, we now state the following lemma, which is a key fact in
the proof of Theorem~\ref{thm:hypermaps}:
\begin{lemma}
\label{lemma:dimension}
Let $\mathfrak{s}$ be a scheme of genus $g$. Then $\mathfrak{s}$ has exactly
$m^{2g}$ different typings.
\end{lemma}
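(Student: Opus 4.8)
The plan is to reinterpret a typing as a $\mathbb{Z}_m$-valued flow on $\mathfrak{s}$, and then to count such flows by a spanning-tree argument. Fix the orientation of the edges of $\mathfrak{s}$ (chosen once and for all, as in the paper), and let $V$ and $E$ denote its numbers of vertices and edges. Viewing $\mathfrak{s}$ as a connected graph, let $\partial \colon (\mathbb{Z}_m)^E \to (\mathbb{Z}_m)^V$ be the boundary (incidence) map sending the basis vector of an oriented edge to (head) minus (tail). By Proposition~\ref{prop:Kirchoff}, an application $\tau \colon \{\text{edges}\}\to\llbracket 0,m-1\rrbracket$ is a typing precisely when $\partial\tau = 0$, so the set of typings is exactly $\ker\partial$. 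It thus suffices to show $|\ker\partial| = m^{2g}$.

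First I would record the exponent. Since $\mathfrak{s}$ has a single face, Euler's formula gives $V - E + 1 = 2 - 2g$, hence $E - V + 1 = 2g$. So the target becomes $|\ker\partial| = m^{E - V + 1}$.

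The heart of the argument is a spanning-tree bijection. Choose a spanning tree $T$ of $\mathfrak{s}$; it has $V - 1$ edges, leaving $E - (V-1) = 2g$ \emph{chords}. I claim that the restriction map $\rho \colon \ker\partial \to (\mathbb{Z}_m)^{\{\text{chords}\}}$, which forgets the values of a typing on the tree edges, is a bijection; granting this, $|\ker\partial| = m^{2g}$ and the lemma follows. To prove the claim, root $T$ at a vertex $v_0$ and let $\tilde B_T$ be the $(V-1)\times(V-1)$ matrix obtained from the incidence matrix of $T$ by deleting the row indexed by $v_0$; it is classical that $\det\tilde B_T = \pm 1$, so $\tilde B_T$ is invertible over $\mathbb{Z}_m$ for \emph{every} $m$. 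For injectivity of $\rho$: if a typing vanishes on all chords, then writing $y$ for its restriction to the tree edges one gets $\tilde B_T\, y = 0$, whence $y = 0$, so the typing is zero. For surjectivity: given arbitrary values $x \in (\mathbb{Z}_m)^{\{\text{chords}\}}$, one seeks tree-edge values $y$ with $\partial\tau = 0$, i.e. $B_T\, y = -B_C\, x$, where $B_T$ and $B_C$ are the incidence submatrices of the tree and chord edges. Every column of the incidence matrix sums to $0$, so the right-hand side is a vector summing to $0$; deleting the $v_0$-row, the invertible $\tilde B_T$ yields a unique $y$, and the deleted $v_0$-equation then holds automatically since all the remaining equations do and both sides sum to $0$. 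This establishes the bijection.

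The only delicate point, and the reason one cannot merely invoke the rank-nullity theorem, is that $m$ need not be prime, so $\mathbb{Z}_m$ is not a field. The argument is rescued by the total unimodularity of graph incidence matrices (equivalently, the fact that the invariant factors of $\partial$ are all equal to $1$), which is exactly what makes the reduced tree matrix $\tilde B_T$ invertible over every $\mathbb{Z}_m$. This is what I would flag as the main obstacle to a naive proof, and the place where the combinatorial structure (a tree incidence matrix) does the essential work.
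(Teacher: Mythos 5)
Your proof is correct and takes essentially the same route as the paper: the paper likewise identifies typings with the cycle space of $\mathfrak{s}$ over $\mathbb{Z}/m\mathbb{Z}$ and counts it via the complementary edges of a spanning tree, with Euler's formula giving $E-V+1=2g$. The only difference is one of rigor: the paper loosely calls the set of typings a ``vector space'' and speaks of its ``dimension'', while you correctly observe that for composite $m$ this needs the unimodularity of the reduced tree incidence matrix to show the flow module is free of rank $2g$ --- a point the paper glosses over and your argument fills in.
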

\begin{proof}
Observe that, if we identify $\llbracket 0, m-1 \rrbracket$  with
$\mathbb{Z}/m\mathbb{Z}$, the set of all valid typings of $\mathfrak{s}$ is a
$\mathbb{Z}/m\mathbb{Z}$ vector space. Actually, it coincides with the cycle space of $\mathfrak{s}$ in the sense of algebraic graph theory (see
\cite{Tutte:book} for an introduction to this notion). Now, it is
classical that the dimension of the cycle space of a connected graph equals its
number of edges minus its number of vertices plus $1$ (to see that, observe
that the complementary edges of any spanning tree form a basis of this space).
Now, since $\mathfrak{s}$ has one face, Euler characteristic formula gives: $$
\#\mbox{edges of }\mathfrak{s}-\#\mbox{vertices of }\mathfrak{s} = 2g-1.
$$
Hence the cycle space has dimension $2g$, and its cardinality is $m^{2g}$.
\end{proof}

\subsection{Nodal stars and decorated schemes.}

Let once again $v$ be a node of $\mathfrak{t}$. If $v$ is unlabelled, it is
located at the center of an elementary star $F_v$ (which, as we already said,
we call a nodal star). $F_v$ has a certain number of special split-edges, and a
certain number of distinguished labelled vertices, which are connected to the
paths $\mathfrak{p}_i$'s of $\mathfrak{t}$. We slightly abuse notations here,
and assume the notation $F_v$ denotes not only the elementary star itself, but
the elementary star together with those distinguished vertices and the
application that maps each distinguished vertex and split-edge of $F_v$ to the
corresponding half-edge of $\mathfrak{s}$.

In the case where $v$ is labelled, we put formally $F_v=\circ$, where $\circ$
may be understood as a single labelled vertex considered up to translation (so
that its label does not import).

Until the rest of the paper, if $\mathfrak{s}$ is a scheme of genus $g$, we
note $E(\mathfrak{s})$ and $V(\mathfrak{s})$ for the sets of edges and
vertices of $\mathfrak{s}$, respectively. If $|E(\mathfrak{s})|=k$, we will
sometimes identify $E(\mathfrak{s})$ with $\llbracket 1, k\rrbracket$.

\begin{definition}
We say that the quadruple 
$$
(\mathfrak{s},\tau,F,\mathfrak{a})=
\left(
\mathfrak{s},
(\tau(e))_{e\in E(\mathfrak{s})},
(F(v))_{v\in V(\mathfrak{s})},
(\mathfrak{a}_1(e),\mathfrak{a}_2(e))_{e\in E(\mathfrak{s})}\right)$$ is the
\emph{decorated scheme} of $\mathfrak{t}$.
\end{definition}

\subsection{The full scheme of a mobile.}
We now present the last step of the reduction of mobiles to elementary objects.
We assume that for each decorated scheme $(\mathfrak{s},\tau,F,\mathfrak{a})$,
and for each vertex $v$ of $\mathfrak{s}$, the star $F_v$ carries an arbitrary
but fixed labelled vertex or flag, chosen once and for all, that we call the
\emph{canonical element} of $v$. 

Now, let $\mathfrak{t}$ be a mobile, of decorated scheme
$(\mathfrak{s},\tau,F,\mathfrak{a})$. For each vertex $v$ of $\mathfrak{s}$, 
we let $l_v$ be the label in $\mathfrak{t}$ of the canonical element of $v$. We
now normalize these labels, so that they form an integer interval of minimum
$0$. Precisely, we let $M=\mathrm{card}\ \{l_v, \ v\in V(\mathfrak{s})\}-1$ and
$\lambda$ be the unique surjective increasing application $\{l_v, \ v\in
V(\mathfrak{s})\} \rightarrow \llbracket 0,M \rrbracket$.
\begin{definition}
We say that the quintuple $(\mathfrak{s},\tau,F,\mathfrak{a},\lambda)$ is the
\emph{full scheme} of $\mathfrak{t}$.
\end{definition}
In few words, the full scheme of $\mathfrak{t}$ contains five informations: the
combinatorial arrangement of the superchains, given by $\mathfrak{s}$ ; the
types of the superchains, given by $\tau$ ; the stars $F_v$ that lie on the
nodes of $\mathfrak{t}$; the (eventually trivial) stars $\mathfrak{a}_1(e)$
and $\mathfrak{a}_2(e)$ that ensure that each superchain of type $\neq 0$ begins
with a white star, and ends with a black one ; the relative order of the labels
of the canonical elements, given by $\lambda$.

Recall that the number of schemes of genus $g$, and the number of typings of a
given scheme, are finite. Moreover, since the set $D$ of allowed face degrees
is finite, there are only a finite number of elementary stars with total degree
in $mD$. Hence  $(F(v))_{v\in V(\mathfrak{s})}$ and 
$(\mathfrak{a}_1(e),\mathfrak{a}_2(e))_{e\in E(\mathfrak{s})})$ can only
take a finite number of values, and:
\begin{lemma}
The set $\mathcal{F}_{g}$ of all full schemes of genus
$g$ is finite.
\end{lemma}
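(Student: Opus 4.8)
The plan is to prove finiteness coordinate by coordinate: a full scheme is a quintuple $(\mathfrak{s},\tau,F,\mathfrak{a},\lambda)$, so it suffices to show that each of the five entries ranges over a finite set, and moreover that once the scheme $\mathfrak{s}$ is fixed the remaining four entries still take only finitely many values. Since a finite union (indexed by $\mathfrak{s}$) of finite sets is finite, this gives the claim.

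First I would dispose of the two entries already controlled by earlier results. The scheme $\mathfrak{s}$ ranges over $\mathcal{S}_g$, which is finite by the earlier lemma quoting \cite{ChMaSc}, since Equation~\ref{eq:Eulerscheme} forces the degree sequence $(n_i)_{i\geq 3}$ to take only finitely many values and each such sequence is realised by finitely many maps. Once $\mathfrak{s}$ is fixed, the typing $\tau$ ranges over the set of typings of $\mathfrak{s}$, which by Lemma~\ref{lemma:dimension} has cardinality exactly $m^{2g}$; in particular it is finite.

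The heart of the argument is the finiteness of the star data $F$ and $\mathfrak{a}$, for which the key input is that there are only finitely many elementary stars whose total degree lies in $mD$. This follows from Lemma~\ref{lemma:mwalk}: a white elementary star of degree $l$ is encoded by a circular $m$-walk of length $l$, i.e.\ by an $l$-tuple with entries in $\llbracket -1,m-1\rrbracket$ taken up to cyclic rotation, of which there are finitely many for each $l$; and since $D$ is finite, only finitely many degrees $l\in mD$ occur (the same bound holds for black stars). Each nodal star $F(v)$ consists of such an elementary star together with a choice of distinguished labelled vertices and split-edges and the map sending them to half-edges of $\mathfrak{s}$; as $\mathfrak{s}$ has finitely many half-edges and each star finitely many elements, this decoration contributes only a finite factor, so $F(v)$—and hence the tuple $F=(F(v))_{v\in V(\mathfrak{s})}$—takes finitely many values. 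The correcting stars $\mathfrak{a}_1(e),\mathfrak{a}_2(e)$ are each either empty or an elementary star of degree in $mD$, so $\mathfrak{a}$ is likewise finite.

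It remains to bound $\lambda$. By construction $\lambda$ is the unique increasing surjection from the finite set $\{l_v : v\in V(\mathfrak{s})\}$ onto an initial interval, hence is entirely determined by the relative order (with possible ties) of the labels $l_v$; there are at most $|V(\mathfrak{s})|!$ such orderings, so finitely many $\lambda$. Assembling the five finite factors shows that for each fixed $\mathfrak{s}$ only finitely many full schemes have scheme $\mathfrak{s}$, and summing over the finite set $\mathcal{S}_g$ yields the finiteness of $\mathcal{F}_g$. I do not expect any genuine obstacle here, as this is routine finiteness bookkeeping; the only point requiring a little care is to remember that $F(v)$ carries the half-edge correspondence and the distinguished elements in addition to the bare elementary star, so that one must count decorated stars rather than stars alone.
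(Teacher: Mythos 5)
Your proof is correct and takes essentially the same approach as the paper, whose own justification is the terse paragraph preceding the lemma: finiteness of schemes, of typings of a fixed scheme, and of elementary stars with total degree in $mD$ (since $D$ is finite), assembled coordinate by coordinate; you merely spell out the star-counting via Lemma~\ref{lemma:mwalk} and treat $\lambda$ explicitly, which the paper leaves implicit. One negligible slip: since ties among the labels $l_v$ are allowed, the number of possible $\lambda$'s is the number of weak orderings of $V(\mathfrak{s})$ (an ordered Bell number), which exceeds $|V(\mathfrak{s})|!$ --- but it is still finite, so the argument stands.
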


Let $\mathfrak{f}=(\mathfrak{s},\tau,F,\mathfrak{a},\lambda)$ be a full scheme
of genus $g$. We say that a labelling $(l_v)_{v\in V(\mathfrak{s})}$ of its
canonical elements is compatible with $\mathfrak{f}$ if normalizing it to an
integer interval as we did above yields the application $\lambda$. We consider
compatible labellings up to translation, or equivalently, we assume that the
minimum $l_v$ is equal to $0$, so that all the compatible labellings are of the
form:
$$
l_v = \sum_{i=1}^{\lambda(v)} \delta_i \ \mbox{ for some
}\delta\in(\mathbb{N}_{>0})^{M}.$$
Assume that such a labelling has been fixed. To reconstruct a mobile, we have
to do the inverse of what precedes, and substitute a sequence of cells of the
good type along each edge of $\mathfrak{s}$. Observe that, for each edge $e$,
the increment $\Delta(e)$ of the superchain to be substituted to $e$ is
fixed by the choice of $(l_v)$. Precisely, let $e_+$ and $e_-$ be the
extremities of $e$, with the convention
$\lambda(e_+)\geq\lambda(e_-)$ (if $\lambda(e_+)=\lambda(e_-)$, any fixed
choice will be convenient). Then, up to the sign, we have
$\Delta(e)=l_{e_+}-l_{e_-}+a_{F,\mathfrak{a}}(e)$, where $a_{F,\mathfrak{a}}(e)$
is a correction term that does not depends on the $l_v$'s, and that accounts for 
the fact that superchains do not necessarily begin and end at
the canonical vertices. Precisely, $a_{F,\mathfrak{a}}(e)$ equals the
difference of the label of the canonical element of $e_+$ and the label of the
out vertex or flag of $\mathfrak{a}_2(e)$, from which one must substract the
corresponding quantity for $e_-$ (and it is important that these differences depend only on $F$ and
$\mathfrak{a}$). Putting
things in terms of the $\delta_i$'s, we can write:
$$\Delta(e)=a_{F,\mathfrak{a}}(e)+\delta_{e_-+1}+\ldots+\delta_{e_+}=a_{F,\mathfrak{a}}(e)+\sum_{j}A_{e,j}\delta_j
$$ where  for each edge $e$ and $j\in\llbracket 1,M\rrbracket$ we put
$A_{e,j}=\mathbbm{1}_{e_-<j\leq e_+}$.

%\section{Reconstructing all mobiles from their full schemes.}

\subsection{A non-deterministic algorithm.}
We consider the following non-deterministic algorithm:
\begin{algorithm}
\label{algo:reconstruct}
We reconstruct a mobile by the following steps:
\begin{itemize}
  \item[\bf 1.] we choose a full scheme
  $\mathfrak{f}=(\mathfrak{s},\tau,F,\mathfrak{a},\lambda)\in\mathcal{F}_g$.
  \item[\bf 2.] we choose a compatible labelling $(l_v)_{v\in V(\mathfrak{s})}$
  of $\mathfrak{f}$, or equivalently, a vector $\delta\in(\mathbb{N}_{>0})^{M}$.
  \item[\bf 3.] for each edge $e$ of $\mathfrak{s}$, we choose a chain of type
  $\tau(e)$. We then replace the edge $e$ by this chain, eventually preceded by
  the star $\mathfrak{a}_1(e)$ and followed by the star $\mathfrak{a}_2(e)$ if
  they are not empty.
  \item[\bf 4.] on each corner adjacent to a labelled vertex, we attach a
  planar mobile (which can eventually be trivial).
  \item[\bf 5.] we distinguish an edge as the root, and we orient it leaving a
  white unlabelled vertex.
  \item[\bf 6.] we shift all the labels in order that the root label is $0$.
\end{itemize}
\end{algorithm}
We have:
\begin{proposition}\label{prop:algo}
All mobiles of genus $g$ can be obtained by Algorithm~\ref{algo:reconstruct}.
More precisely, each mobile whose scheme has $k$ edges can be obtained in
exactly $2k$ ways by that algorithm.
\end{proposition}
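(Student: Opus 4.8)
The plan is to establish both the surjectivity claim and the precise $2k$-to-one multiplicity by analyzing how each choice in the algorithm corresponds to the reduction described in the previous sections. I would prove this in two directions: first that every mobile arises from some run of the algorithm, and second that the fibers over a fixed mobile have cardinality exactly $2k$.

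\bigskip

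For surjectivity, I would argue that the algorithm simply inverts the reduction procedure of the preceding subsections. Given an arbitrary mobile $\mathfrak{t}$ of genus $g$, Lemma~\ref{lemma:nospecial} together with the superchain analysis shows that $\mathfrak{t}$ decomposes uniquely into its full scheme $\mathfrak{f}=(\mathfrak{s},\tau,F,\mathfrak{a},\lambda)$, a compatible labelling (the $l_v$'s of the canonical elements), a chain of the correct type $\tau(e)$ along each edge $e$ of $\mathfrak{s}$, and planar mobiles hanging off the labelled corners. I would check that steps $\mathbf{1}$--$\mathbf{4}$ of the algorithm, run with exactly these data, reproduce $\mathfrak{t}$ up to rooting and label-shift, and that steps $\mathbf{5}$--$\mathbf{6}$ can recover the correct root and normalization. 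The key point here is that by construction the increment $\Delta(e)$ forced on each superchain by the labelling is consistent, so a chain of type $\tau(e)$ realizing that increment exists and can be inserted.

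\bigskip

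The heart of the proposition is the multiplicity count, and this is where I expect the main obstacle to lie. The full scheme $\mathfrak{f}$ is recovered canonically from $\mathfrak{t}$, as are the compatible labelling (up to the translation already quotiented out in step $\mathbf{6}$) and the chains and planar mobiles --- so steps $\mathbf{1}$--$\mathbf{4}$ are essentially deterministic given $\mathfrak{t}$. The only genuine freedom is in steps $\mathbf{5}$ and $\mathbf{6}$: the choice of which edge to declare the root. I would argue that a mobile whose scheme has $k$ edges, once stripped of its original rooting, admits exactly $2k$ ways to be re-rooted by the algorithm. The factor $k$ should come from the $k$ edges of the scheme (each superchain, hence each edge of $\mathfrak{s}$, can carry the distinguished root edge of the reconstructed mobile), and the factor $2$ from the two possible orientations, constrained by the requirement in step $\mathbf{5}$ that the root leave a white unlabelled vertex. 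The delicate part is to confirm that every one of these $2k$ rooting choices produces a genuinely valid run of the algorithm --- in particular that the scheme's own root (used to orient and label its edges canonically) does not over- or under-count, and that distinct rooting choices give distinct runs while all yielding the same underlying mobile after the shift in step $\mathbf{6}$.

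\bigskip

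Concretely, I would fix the output mobile $\mathfrak{t}$ and set up a bijection between the set of algorithm-runs producing $\mathfrak{t}$ and the set of (edge, orientation) pairs allowed in step $\mathbf{5}$. Because the rooting determines the relabelling in step $\mathbf{6}$ but not the intrinsic combinatorial object, I would verify that the map ``run $\mapsto$ rooted mobile it produces'' has constant fiber size, counting the admissible root choices on $\mathfrak{t}$ directly: $k$ superchains and a binary orientation giving $2k$. The main thing to be careful about is ensuring that the rooting convention interacts correctly with the canonical labelling of scheme edges so that no valid run is missed and none is counted twice; this bookkeeping, rather than any deep idea, is the real work of the proof.
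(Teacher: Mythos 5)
Your surjectivity argument matches the paper's, but your multiplicity count misidentifies where the factor $2k$ comes from, and as written it would fail. You claim that steps \textbf{1}--\textbf{4} are essentially deterministic given $\mathfrak{t}$ and that the freedom lies in step \textbf{5}. It is exactly the other way around. The output of the algorithm is a \emph{rooted} mobile, so for the output to equal the given rooted mobile $\mathfrak{t}$, the edge distinguished at step \textbf{5} must be the root of $\mathfrak{t}$ itself: choosing any other edge produces a different rooted mobile, not another run yielding $\mathfrak{t}$. So step \textbf{5} contributes no freedom at all. The genuine freedom is at step \textbf{1}: a scheme is by definition a \emph{rooted} one-face map, and the root of the scheme $\mathfrak{s}$ chosen at step \textbf{1} is ``forgotten'' in the output --- the scheme of the produced mobile gets its root from the mobile's root (via the core/scheme construction), not from the root of the starting $\mathfrak{s}$. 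Hence any of the $2k$ rootings of the underlying unrooted scheme ($k$ edges, $2$ orientations) gives a valid run producing the same $\mathfrak{t}$, and after that rooting is fixed all remaining choices are imposed by the structure of $\mathfrak{t}$. The paper packages this as a bijection with \emph{doubly-rooted mobiles} (mobiles whose scheme carries a secondary oriented root edge): the algorithm produces each doubly-rooted mobile exactly once, and each mobile with $k$ scheme edges corresponds to exactly $2k$ doubly-rooted mobiles, with no symmetry issue since the scheme is already rooted once by the mobile.

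Your proposed accounting of the factor $k$ is also internally flawed: you say each of the $k$ superchains ``can carry the distinguished root edge of the reconstructed mobile,'' but the root of a mobile need not lie on a superchain at all (it can sit in one of the planar mobiles attached at step \textbf{4}), and a single superchain contains many edges, so counting superchains does not count admissible root positions. There are in general far more than $2k$ edges of $\mathfrak{t}$ leaving a white unlabelled vertex; the number $2k$ simply has nothing to do with the set of possible roots of $\mathfrak{t}$.
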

\begin{proof}
The first statement follows by the decomposition we have explained until now:
we just have to re-add what we have deleted. Precisely, to reconstruct the
mobile $\mathfrak{t}$  from its full scheme, one can first recover the
labelling, then replace each edge by the corresponding superchain. Then, one
has to re-attach the planar trees that have been detached from $\mathfrak{t}$
during the construction of its core: this can be done at step {\bf 4}. Finally,
one obtains $\mathfrak{t}$ by choosing the right edge for its root, and shifting
the labels to fit the convention of the definition of a mobile.

Now, let us prove the second statement. It is clear that the only way to obtain
the mobile $\mathfrak{t}$ by different choices in the algorithm above is to
start at the beginning with a scheme which coincides with the scheme of
$\mathfrak{t}$ as un unrooted map, but may differ by the rooting.
Precisely, let us call a \emph{doubly-rooted mobile} a mobile whose scheme
carries a secondary oriented root edge. Clearly, a mobile whose scheme has $k$
edges corresponds to $2k$ doubly-rooted mobiles (since its scheme is already
rooted once, it has no symmetry). Now, Algorithm~\ref{algo:reconstruct} can be
viewed as an algorithm that produces a doubly-rooted mobile: the secondary root
of the scheme of the obtained mobile is given by the root of the scheme
$\mathfrak{s}$ chosen at step~{\bf 1} (we insist on the fact that the root of the starting scheme
$\mathfrak{s}$ has no reason to be the root of the scheme of the mobile obtained at the end).
Moreover, it is clear that each doubly-rooted mobile can be obtained in exaclty one way by the algorithm: the secondary root imposes the choice of the starting scheme $\mathfrak{s}$,
and after that all the choices are imposed by the strucure of $\mathfrak{t}$.
This concludes the proof of the proposition.
\end{proof}
\begin{remark}\label{rem:algobipartite}
Let us consider a variant of the algorithm, where at step {\bf 1}, we choose
only full schemes whose typing is identically $0$. Then
Proposition~\ref{prop:algo} is still true, up to replacing the word ''mobile''
by ``mobile associated with a $m$-constellation''. Indeed, a mobile is
associated to a $m$ constellation if and only if it has no special edge, and
the double-rooting argument in the proof of the proposition clearly works if we
restrict ourselves to this kind of mobiles.
\end{remark}

\section{Generating series of cells and chains}
Algorithm~\ref{algo:reconstruct} and Proposition~\ref{prop:algo} reduce the
enumeration of mobiles to the one of a few building blocks: schemes, planar
mobiles, cells and chains of given type. We now compute the corresponding
generating series.\\
{\bf Note:} In what follows, $m$ and $D$ are fixed. To keep things lighter,
the dependancy in $m$ and $D$ will many times be omitted in the notations.

\subsection{Planar mobiles.}
%We let $T(z):=\displaystyle\sum_{\mathfrak{t}\scriptsize\mbox{
%planar}}z^{|\mathfrak{t}|}$ be the generating series of planar mobiles by the
%size. For technical reasons, 
We let $\Tplanar(z)$ be the generating series, by the number of black
vertices, of planar mobiles whose root edge connects a white unlabelled vertex
to a labelled vertex. 
%Now, recall that all planar
%$m$-hypermaps are $m$-constellations, so that in a planar mobile all black
%vertices are univalent. Hence each white unlabelled vertex of total degree $mk$
%is connnected to exactly $(m-1)k$ labelled vertices and $k$ black unlabelled
%vertices. Hence we have:
%$$
%\Tplanar(z) = \frac{m-1}{m} T(z).
%$$
Observe that $\Tplanar$ is also the generating series of planar mobiles
which are rooted at a corner adjacent to a labelled vertex (for example, choose the
root corner as the first corner encountered clockwise after the root edge,
clockwise around the labelled vertex it is connected to).
Now, let $\mathfrak{t}$ be a planar mobile, whose root edge is connected to a
labelled vertex, and say
that the white elementary star containing the root-edge has total degree
$mk$. This star is attached to one planar mobile on each of its $(m-1)k$
labelled vertices ; each of these mobiles is naturally rooted at a labelled
corner. Moreover, given this star and the sequence of those $(m-1)k$ planar
mobiles, one can clearly reconstruct the mobile $\mathfrak{t}$. Finally, by
Lemma~\ref{lemma:mwalk}, the number of elementary white stars with 
total degree $mk$ and a distinguished edge connected to a labelled vertex
is equal to the number of $m$-walks of length $mk$ that begin with a
step $-1$, and have $(m-1)k$ steps $-1$ and $k$ steps $m-1$ in total,
which is ${mk -1 \choose k}$. This gives the equation:
\begin{eqnarray}
\label{eq:planar}
\Tplanar(z)= 1 + \sum_{k\in D} {mk -1  \choose k} z^k \Tplanar(z)^{(m-1)k} 
\end{eqnarray}
Observe that the hypotheses made on $D$ ensure that this equation has degree at
least $2$ in $\Tplanar$.
Moreover, $\Tplanar$ has a positive radius of convergence
$\zc$, and letting $\Tplanarc =\Tplanar(\zc)
$, one has:
\begin{eqnarray}
\label{eq:Rc}
\Tplanarc = (m-1) \sum_{k\in D} k {mk-1 \choose k}  
[\zc \Tplanarc^{m-1}]^{k}
\end{eqnarray}
Subtracting Equation~\ref{eq:planar} to Equation~\ref{eq:Rc} shows that 
$\zc \Tplanarc^{m-1}=\tc$, and 
$\Tplanarc=\beta_{m,D}$, where $\tc$ and $\beta_{m,D}$
are defined in the statement of Theorem~\ref{thm:main}.

Writing down the multivariate Taylor expansion of Equation \ref{eq:planar} near
$z=\zc$ easily leads to the following lemma:
\begin{lemma}
\label{lemma:devR}
When $z$ tends to $\zc$, the following Puiseux expansion holds:
\begin{eqnarray*}
1- \frac{\Tplanar(z)}{\Tplanarc} = \sqrt{\frac{2
\beta_{m,D}}{(m-1)\gamma_{m,D}}} \sqrt{1-\frac{z}{\zc}} +
O\left(\zc-z\right)
\end{eqnarray*}
\end{lemma}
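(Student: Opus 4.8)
The plan is to perform a singularity analysis of the functional equation~\ref{eq:planar} near its dominant singularity $\zc$. I regard~\ref{eq:planar} as defining $\Tplanar$ implicitly through $\Phi(z,T)=0$, where
$$
\Phi(z,T) = T - 1 - \sum_{k\in D} {mk-1 \choose k} z^k T^{(m-1)k}.
$$
First I would record that $\Phi(\zc,\Tplanarc)=0$ and that the dominant singularity $\zc$ of the algebraic (indeed implicitly defined) series $\Tplanar$ is of the square-root branch-point type, arising where the implicit-function theorem fails, i.e.\ where $\partial_T\Phi(\zc,\Tplanarc)=0$. I would verify this last identity directly: since
$$
\partial_T \Phi(z,T) = 1 - (m-1)\sum_{k\in D} k {mk-1 \choose k} z^k T^{(m-1)k-1},
$$
evaluating at $(\zc,\Tplanarc)$ and multiplying by $\Tplanarc$ gives exactly $\Tplanarc - (m-1)\sum_{k\in D} k{mk-1\choose k}\zc\Tplanarc^{(m-1)k}$, which vanishes by Equation~\ref{eq:Rc}. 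This is the structural reason the singularity is a square root rather than a pole, and it is the conceptual crux of the argument.

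Next I would Taylor-expand $\Phi$ to second order about $(\zc,\Tplanarc)$. Setting $u = 1 - \Tplanar(z)/\Tplanarc$ (so $T = \Tplanarc(1-u)$) and using $\partial_T\Phi=0$ at the singular point, the expansion of $0=\Phi(z,T)$ takes the schematic form
$$
\partial_z\Phi\,(z-\zc) + \tfrac{1}{2}\,\partial_T^2\Phi\,\Tplanarc^2\,u^2 + (\text{higher order}) = 0,
$$
where all partials are evaluated at $(\zc,\Tplanarc)$. Solving for $u$ yields $u \sim C\sqrt{1-z/\zc}$ for an explicit constant $C$, with the next correction of order $(\zc - z)$; the sign of the square root is fixed by requiring $u\to 0^+$ as $z\uparrow\zc$ with $\Tplanar$ increasing. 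The main computational task is then to identify $C$ with $\sqrt{2\beta_{m,D}/\big((m-1)\gamma_{m,D}\big)}$.

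To pin down the constant I would express the relevant partials in terms of $\beta_{m,D}$ and $\gamma_{m,D}$ via the substitution $\zc\Tplanarc^{m-1}=\tc$ and $\Tplanarc=\beta_{m,D}$ established just above the lemma. Concretely, $\partial_T^2\Phi$ evaluated at the singular point produces, after multiplying by $\Tplanarc^2$, a sum matching $-\gamma_{m,D}$ up to an explicit factor of $(m-1)$ (because the double derivative brings down $(m-1)k\big((m-1)k-1\big)$, which is precisely the summand defining $\gamma_{m,D}$), while the factor $\partial_z\Phi\cdot\zc$ reduces to $-\beta_{m,D}$ after using Equation~\ref{eq:Rc} and the relation $\zc\Tplanarc^{m-1}=\tc$. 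Combining these gives $C^2 = 2\beta_{m,D}/\big((m-1)\gamma_{m,D}\big)$, yielding the claimed Puiseux expansion. I expect the main obstacle to be purely bookkeeping: carefully tracking the powers of $\Tplanarc=\beta_{m,D}$ and the factors of $k$ and $(m-1)$ so that the derivative sums collapse exactly onto the definitions of $\beta_{m,D}$ and $\gamma_{m,D}$ rather than onto constant multiples of them, and checking that $\gamma_{m,D}\neq 0$ so the square-root behaviour is nondegenerate (which holds since $D$ is nonempty with positive entries and, by hypothesis when $m=2$, not reduced to $\{1\}$, guaranteeing Equation~\ref{eq:planar} has degree $\geq 2$).
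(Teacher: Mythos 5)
Your proposal is correct and is essentially the paper's own proof, which consists precisely of the one-line remark that writing down the multivariate Taylor expansion of Equation~\ref{eq:planar} near $z=\zc$ yields the lemma; your write-up simply makes explicit the key vanishing $\partial_T\Phi(\zc,\Tplanarc)=0$ (equivalent to Equation~\ref{eq:Rc}) and the identification of the second-order coefficients. One bookkeeping note: the $(m-1)$ in the final constant actually comes from $\zc\,\partial_z\Phi(\zc,\Tplanarc)=-\beta_{m,D}/(m-1)$, while $\Tplanarc^2\,\partial_T^2\Phi(\zc,\Tplanarc)=-\gamma_{m,D}$ exactly, so the factor sits in the $\partial_z$ term rather than the $\partial_T^2$ term, but your final value $C^2=2\beta_{m,D}/\bigl((m-1)\gamma_{m,D}\bigr)$ is the right one.
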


\subsection{The characteristic polynomial of type $0$.}
Let $\mathcal{F}_{m,D}^{\circ\circ}$  be the set of all cells of type $0$ whose
total degree belongs to $mD$. For $F\in\mathcal{F}_{m,D}^{\circ\circ}$, we
denote respectively $|F|$ and $\mathrm{i}(F)$ the size and the increment of
$F$.
The \emph{characteristic polynomial} of type $0$ is the following generating
Laurent polynomial:
\begin{eqnarray*}
P_{m,D}(X,t) = \sum_{F\in \mathcal{F}_{m,D}^{\circ\circ}}
t^{|F|}X^{i(F)}
\end{eqnarray*}
For example, in the case $m=2$, $D=\{2\}$, Figure~\ref{fig:3typesstars} shows
that the characteristic polynomial is $t^2(X^{-1}+1+X)$.
\begin{figure}[h]
\centerline{\includegraphics[scale=1]{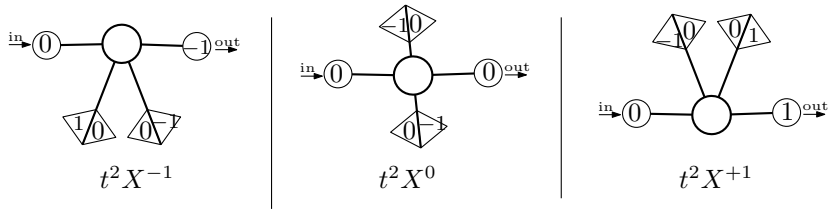}}
\caption{The three cells of type $0$ and total degree $4$.}
\label{fig:3typesstars}
\end{figure}

For every $n\in \mathbb{N}$ and $i\in \mathbb{Z}$, we let $a_{n,i}$ be the
number of chains of type $0$ of total size $n$ and increment $i$. Note that for
every $n$, $a_{n,i}=0$ except for a finite number of values of $i$.
Hence, if $\mathbb{C}[X,X^{-1}][[t]]$ denotes the ring of formal power series
in $t$ with coefficients that are Laurent polynomials in $X$, the 
generating function $S_{m,D}(X,t) = \sum_{n=0}^\infty \sum_{i=-\infty}^\infty
a_{n,i} t^n X^{i}$ of chains of type $0$ by the size and the increment
is a well defined element of $\mathbb{C}[X,X^{-1}][[t]]$.
%$$
%S_{m,D}(X,t) = \sum_{k=0}^\infty \sum_{i=-\infty}^\infty a_{k,i} t^k X_{i}
%$$
Since, by definition, a chain of type $0$ is a sequence of cells of type $0$,
and since the size and the increment are additive parameters, we have by
classical symbolic combinatorics: $$ S_{m,D}(X,t)= \frac{1}{1-P_{m,D}(X,t)}.
$$ 
This is the reason why we will spent some time on the study of the polynomial
$1-P_{m,D}(X,t)$ (which is called the \emph{kernel} in the standard terminology
of lattice walks, see \cite{BaFl,MBM-Excursions}). 

Observe that, in the $m$-walk reformulation, a cell of type $0$ is a
circular $m$-walk with two distinguished steps $-1$, or equivalently, a 
$m$-walk beginning with a step $-1$, with another step $-1$ distinguished.
Hence the number of cells ot type $0$ and total degree $mk$ equals
$[(m-1)k-1]{mk-1 \choose k}t^k$, so that
$P(1,t)=\sum_{k\in D}[(m-1)k-1]{mk-1 \choose k}t^k$, and
$P(1,\tc)=1$. Consequently, $\tc$ is the radius of convergence of the series
$S_{m,D}(1,t)$.
We now study the partial derivatives at the critical point. We have:
\begin{lemma}
\label{lemma:partialdiff}
\begin{eqnarray}
\frac{t\partial P_{m,D}}{\partial t} (1,\tc) &=&
\frac{\gamma_{m,D}}{m-1} \\ \frac{\partial P_{m,D}}{\partial X}
(1,\tc) &=& 0 \\ \frac{\partial^2 P_{m,D}}{\partial X^2} (1,\tc) &=&
\frac{m}{6}\gamma_{m,D}
\end{eqnarray} 
\end{lemma}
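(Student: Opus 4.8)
The plan is to compute the three partial derivatives directly from the explicit description of cells of type $0$ as circular $m$-walks with two distinguished $-1$ steps, by differentiating the defining sum for $P_{m,D}(X,t)$ term by term and then specializing at $(X,t)=(1,\tc)$. Recall that a cell of type $0$ of total degree $mk$ is (by Lemma~\ref{lemma:mwalk} and the reformulation just established) a circular $m$-walk of length $mk$ with an initial $-1$ step and a second distinguished $-1$ step; such a walk has $(m-1)k$ steps equal to $-1$ and $k$ steps equal to $m-1$. The increment $i(F)$ of a cell is the net displacement between its in and out vertices, which depends on \emph{where} along the circular walk the two distinguished $-1$ steps sit. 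So the variable $X$ effectively tracks a one-dimensional random-walk position, and the generating polynomial $P_{m,D}(X,t)$ is, degree by degree in $t$, a sum over positions of $X^{\text{displacement}}$ weighted by the lattice-path counts.

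First I would make the distribution of the increment $i(F)$ fully explicit. Fixing the total degree $mk$ and hence the multiset of steps, I would parametrize a cell by the gap between the two distinguished $-1$ steps, and write $i(F)$ as the partial sum of the intervening steps. The key symmetry I expect to exploit is that, for a \emph{symmetric} step set (the steps $-1$ and $m-1$ average to zero since we have $(m-1)k$ copies of $-1$ and $k$ copies of $m-1$, giving total $0$), the increment is symmetrically distributed about $0$: replacing a cell by the ``reversed'' one negates the increment. This immediately forces all odd moments in $X$ to vanish at $X=1$, giving $\frac{\partial P}{\partial X}(1,\tc)=0$ once we note that $\frac{\partial P}{\partial X}(1,t)$ is the first moment of the increment and that the $t$-specialization only rescales. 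The value $P(1,\tc)=1$, already recorded in the excerpt, is what lets me drop the denominator issues and evaluate cleanly at the critical point.

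Next I would handle the $t$-derivative. Since $P(1,t)=\sum_{k\in D}[(m-1)k-1]\binom{mk-1}{k}t^k$, applying $t\frac{\partial}{\partial t}$ brings down a factor $k$ in each term, giving $\sum_{k\in D} k[(m-1)k-1]\binom{mk-1}{k}\tc^{\,k}$. I would then match this against the definition of $\gamma_{m,D}=\sum_{k\in D}[(m-1)k][(m-1)k-1]\binom{mk-1}{k}\tc^{\,k}$ from Theorem~\ref{thm:main}: the coefficient $k[(m-1)k-1]$ equals $\frac{1}{m-1}[(m-1)k][(m-1)k-1]$, so the sum is exactly $\frac{\gamma_{m,D}}{m-1}$. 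This identity $(m-1)k=[(m-1)k]$ is the algebraic bridge, and it is purely formal.

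The second $X$-derivative is where the real work lies, and I expect it to be the main obstacle. Here $\frac{\partial^2 P}{\partial X^2}(1,\tc)$ is the second factorial moment of the increment, and I must show it equals $\frac{m}{6}\gamma_{m,D}$. The plan is to fix $k$, compute the variance of the displacement $i(F)$ over all placements of the two distinguished $-1$ steps in a circular $m$-walk of the given step-multiset, and then sum over $k\in D$ with weight $\binom{mk-1}{k}\tc^{\,k}$. The cleanest route is to view the increment as a sum of independent-looking contributions of the $mk$ steps and compute $\sum_F i(F)^2$ (equivalently $\sum_F i(F)(i(F)-1)$, accounting for the shift between raw and factorial moments) by a double sum over pairs of steps; the circular symmetry should collapse this into a manageable closed form. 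I anticipate the bookkeeping of the $\binom{mk-1}{k}$ prefactors, the conversion between the raw second moment and the second derivative of $X^{i}$ at $X=1$, and the extraction of the clean constant $m/6$ to be the delicate points. The factor $\frac{1}{6}$ strongly suggests a sum of the form $\sum_{j} j(k\cdot m-j)$ or a quadratic variance of a uniform position, which telescopes to a cubic in $mk$ divided by $6$; I would verify that the leading behavior produces precisely $[(m-1)k][(m-1)k-1]$ after the dust settles, thereby reproducing $\gamma_{m,D}$ up to the overall $m/6$. A useful sanity check throughout is the worked example $m=2$, $D=\{2\}$, where $P=t^2(X^{-1}+1+X)$ and the three derivatives can be read off by hand to confirm the constants.
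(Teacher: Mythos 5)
Your treatment of the first two identities is correct and coincides with the paper's: the $t$-derivative follows by differentiating $P_{m,D}(1,t)=\sum_{k\in D}[(m-1)k-1]{mk-1 \choose k}t^k$ term by term and rewriting $k[(m-1)k-1]=\frac{1}{m-1}[(m-1)k][(m-1)k-1]$, and the vanishing of $\frac{\partial P_{m,D}}{\partial X}(1,\tc)$ follows from the in/out involution on cells, which gives $P_{m,D}(X,t)=P_{m,D}(X^{-1},t)$ identically in $t$ --- exactly the paper's argument.

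The third identity, however, is where you yourself locate ``the real work'', and that work is not done: your text for $\frac{\partial^2 P_{m,D}}{\partial X^2}(1,\tc)$ is a plan, with the decisive steps explicitly deferred (``I anticipate the bookkeeping\ldots to be the delicate points'', ``I would verify\ldots after the dust settles''). What has to be proven is the exact identity $\sum_{F} i(F)^2 = \frac{mk(m-1)[(m-1)k-1]}{6}{mk-1 \choose k}$ for each fixed $k\in D$, and neither your double sum over pairs of steps nor your uniform-position variance heuristic delivers it as stated. Two concrete obstacles remain. First, the two marked positions are not uniform positions on the circular walk: they are constrained to precede steps $-1$, so the steps adjacent to the marks are biased, and the bridge-variance heuristic $\sigma^2 n/6$ is only an approximation whose error terms must be shown to cancel exactly (that they do is a nontrivial fact, not a consequence of the heuristic). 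Second, collapsing the double sum $\sum_{a,b} s_a s_b$ requires exact pair correlations of steps in a fixed-composition walk conditioned on the two marks, which you never write down. The paper resolves precisely this point with a dedicated device: it indexes the walk by the positions $x_0,\ldots,x_{u-1}$ preceding steps $-1$ (where $u=(m-1)k$), introduces the risings $\lambda_i=x_i-x_{i-1}$, uses exchangeability to reduce everything to the two quantities $V_k=\sum_w \lambda_j(w)^2$ and $W_k=\sum_w \lambda_1(w)\lambda_j(w)$, exploits the circular constraint $\sum_i \lambda_i=0$ to obtain $V_k+(u-1)W_k=0$ (which kills the cross terms and yields $p_k=\frac{u(u+1)}{6}V_k$), and finally computes $V_k$ exactly from the explicit count ${mk-2-i \choose k-i}$ of walks with $\lambda_1=(m-1)i-1$. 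Some exact mechanism of this kind is indispensable; as written, your proposal establishes only the first two of the three identities.
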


\begin{proof}
The first alinea comes immediately from the definition of $\gamma_{m,D}$ and
the fact that there are $[(m-1)k-1]{mk-1 \choose k}$ distinct cells of type $0$
and size $k$.

For the second alinea, observe that since the operation consisting in inverting the in and out vertices 
of a cell is an involution of $\setcells$, then for every $t$
on has: $P_{m,D}(X,t)=P_{m,D}(X^{-1},t)$, which implies the second claim after
derivating.

We now prove the third alinea. 
%First, recall that $P_{m,D} = \sum_{F\in\mathcal{F}_{m,D}^{\circ\circ}}
%t^{|F|} X^{i(F)}$.
First, recall that in the $m$-walk reformulation, $P_{m,D}$ is the generating
function of linear $m$-walks of length $mk$, beginning with a step $-1$, and
where a position preceding a step $-1$ is distinguished. Since the first derivative 
vanishes (alinea 2), we have:
\begin{eqnarray*}
\frac{\partial^2 P_{m,D}}{\partial X ^2}(1,t)&=&
 \sum_{F\in\setcells} i(F) (i(F)-1) t^{|F|} \\
&=& \sum_{F\in\setcells} i(F)^2 t^{|F|} \;=\; \sum_{k\in D
} p_k t^k
\end{eqnarray*}
where
$\displaystyle
p_k(t) = \sum_{F \in \mathcal{F}_{m,\{k\}}^{\circ\circ}} i(F)^2  
$.
We now fix $k\in D$, and we let $\mathcal{W}_{m,k}^\circ$ be the set of 
$m$-walks of length $mk$ beginning with a step $-1$. We let $u=(m-1)k$
be the number of step $-1$ of such a walk, and for each
$w\in \mathcal{W}_{m,k}^\circ$, we let $x_0(w),x_1(w), \ldots, x_{u-1}(w)$ be
the ordinates of the points preceding a step $-1$ in $w$ (so that $x_0(w)=0$).
Choosing first the $m$-walk, and then distinguishing a step  $-1$, we can write:
%
%A birooted polygon is a rooted polygon where one of the $(m-1)k-1$ 
%active positions has been distinguished. In the rest of the proof we note
%$u=(m-1)k$. Summing over rooted polygons only, we have:
\begin{eqnarray}
\label{eq:TXX1}
p_k&=&
  \sum_{w \in \mathcal{W}_{m,k}^{\circ}} (x_1(w)^2+x_2(w)^2+\dots
  x_{u-1}(w)^2)
\end{eqnarray}
We now introduce the \emph{risings} as the quantities
$\lambda_i(w)=x_{i}(w)-x_{i-1}(w)$, for $i\in\llbracket 1,u\rrbracket$.
Then we have the following facts:
\begin{itemize}
\item[1.] By symmetry, the two following quantities are independant of $j$:
$$V_k=\sum_{w \in\mathcal{W}_{m,k}^{\circ}} \lambda_j(w)^2 t^{k} 
\quad (\mbox{for } j=1..u-1)$$ $$W_k=\sum_{w \in
\mathcal{W}_{m,k}^{\circ}} \lambda_1(w)\lambda_j(w) t^{k(F)} \quad
(\mbox{for } j=2..u-1)$$
\item[2.] Since we have for all $w$: $\delta_1(\delta_1 + \dots + \delta_{u} )
=0$ then it is still true after sumation and:
$$
V_k+[u-1]W_k=0
$$
\end{itemize} 
Putting the last fact together with Equation~\ref{eq:TXX1}, one gets after 
replacing $x_i(w)$ by $\delta_1(w)+\dots+\delta_i(w)$ and expanding:
\begin{eqnarray}
p_k=&=&
\frac{u(u-1)}{2} V_k + \frac{u(u-1)(u-2)}{3} W_k \nonumber \\
&=& \frac{u(u+1)}{6} V_k
\label{eq:tXX2}
\end{eqnarray}
Now, for any integer $i$, the number of rooted polygons of size $k$
such that $\delta_1=(m-1)i-1$ is easily seen to be equal to ${mk-2-i\choose k-i}$, hence:
\begin{eqnarray*}
V_k&=& \sum_i [(m-1)i-1]^2 {mk-2-i\choose k-i}  \\
&=& [Y^{k-i}] \sum_i [(m-1)i-1]^2  (1+Y)^{mk-2-i}  
\end{eqnarray*}
Expressing the last sum as an explicit rational fraction in $Y$, one obtains the
exact value of $V_k$, and putting it together with Equation~\ref{eq:tXX2} leads
to: $$
p_k = \frac{mk(m-1)[(m-1)k-1]}{6}{mk-1 \choose k}
$$
Hence
$$ 
\frac{\partial^2 P_{m,D}}{\partial X ^2}(1,t)=
\sum_{k\in D} \frac{mk(m-1)[(m-1)k-1]}{6}{mk-1 \choose k} t^{k}
$$
which together with the definition of $\gamma_{m,D}$ concludes the proof of the
lemma.
\end{proof}

\subsection{The roots of the characteristic polynomial.}

In this section, we study the roots of $1-P_{m,D}$. Some of the arguments are
general for lattice walks and already contained in
\cite{BaFl},\cite{MBM-Excursions}.

Let $r=(m-1)\max(D) -1$ be the maximal increment of a cell of type $0$ with
total degree in $mD$. Then $P_{m,D}$ has positive degree $r$, and negative degree $-r$.

Among the $2r$ roots of $1-P_{m,D}$, exactly $r$ are finite at $t=0$. Call them
$\alpha_1(t),\dots,\alpha_r(t)$. Since inverting the in and out vertices is
an involution of the set of cells of type $0$, $P_{m,D}$ is symmetric under
the exchange $X \leftrightarrow X^{-1}$, and the $r$ other roots are 
$\alpha_1^{-1}, \dots, \alpha_r^{-1}$, and they are infinite at $t=0$.
%Moreover, one can check that the $alpha_i$ are the $r$ determinations of a Puiseux
%series at $t=0$, the first term being $t^{1/r}$.
% We denote $\alpha_1(t)$, and call
%the \emph{principal branch}, the root which corresponds to the standard determination of
%the $r$-th root.
We have more precisely:
\begin{lemma}
\label{lemma:roots}
Up to renumbering the roots, we have:
\begin{itemize}
\item[(i)]
 $\alpha_1 \in \mathbb{R}$ for $t\in [0,\tc]$, and $\alpha_1(t)$ is an
 increasing function on this interval. Moreover, 
 $\alpha_1 \longrightarrow 1$ when $t\longrightarrow \tc$.
\item[(ii)]
 for all $i \neq 1$, and for all $t\in[0,\tc]$, 
 $|\alpha_i(t)|<|\alpha_1(t)|$. There exists $\epsilon>0$ such
 that for all $i\neq 1$ and for all $t\in [0,t_c]$, 
 $|\alpha_i(t)|<1-\epsilon$.
\end{itemize}
\end{lemma}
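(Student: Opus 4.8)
The plan is to treat $P_{m,D}(\cdot,t)$ as a convex function on the positive real axis, locate the two real roots of $1-P_{m,D}=0$, and then use a triangle-inequality (Perron--Frobenius type) argument to show that the remaining roots are dominated in modulus. Throughout, write $P_{m,D}(X,t)=\sum_i c_i(t)X^i$, where each $c_i(t)$ is a polynomial in $t$ with nonnegative coefficients (it counts cells of type $0$ of increment $i$ by their size), and $c_i=c_{-i}$ by the $X\leftrightarrow X^{-1}$ symmetry of $\setcells$.

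For fixed $t>0$ each term $c_i(t)(X^i+X^{-i})$ is convex on $(0,\infty)$ and minimal at $X=1$, so $X\mapsto P_{m,D}(X,t)$ is convex on $(0,\infty)$ with a unique minimum at $X=1$; in particular $\frac{\partial P_{m,D}}{\partial X}(1,t)=0$ for all $t$, consistent with Lemma~\ref{lemma:partialdiff}. Since $P_{m,D}(1,t)=\sum_{k\in D}[(m-1)k-1]{mk-1 \choose k}t^k$ is strictly increasing with $P_{m,D}(1,\tc)=1$, for $t\in(0,\tc)$ we have $P_{m,D}(1,t)<1$, so $P_{m,D}(X,t)=1$ has exactly two positive real solutions, one in $(0,1)$ and its reciprocal in $(1,\infty)$, which merge at $X=1$ as $t\to\tc$. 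I would name $\alpha_1(t)\in(0,1)$ the smaller one; it is one of the roots finite at $t=0$ (it tends to $0$ as $t\to 0^+$), and $\alpha_1\to 1$ as $t\to\tc$, giving the last claim of (i). Monotonicity follows from the implicit function theorem: on $(0,1)$ the convex function $P_{m,D}(\cdot,t)$ is decreasing, so $\frac{\partial P_{m,D}}{\partial X}(\alpha_1,t)<0$, while $\frac{\partial P_{m,D}}{\partial t}(\alpha_1,t)>0$ since the $c_i$ have nonnegative, increasing coefficients; hence $\alpha_1'(t)=-\frac{\partial_t P_{m,D}}{\partial_X P_{m,D}}>0$.

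For (ii) the key observation is that on the circle $|X|=\rho$ one has $|P_{m,D}(X,t)|\le\sum_i c_i(t)\rho^i=P_{m,D}(\rho,t)$. Thus any root $\beta$ with $|\beta|=\rho$ forces $P_{m,D}(\rho,t)\ge 1$; by the convexity picture this means $\rho\le\alpha_1(t)$ or $\rho\ge 1/\alpha_1(t)$, so no root lies in the open annulus $\alpha_1<|X|<1/\alpha_1$. Counting together with the reciprocal pairing then shows that the $r$ roots finite at $t=0$ are exactly those with $|\beta|\le\alpha_1$, so $|\alpha_i(t)|\le\alpha_1(t)$ for all $i$. Strictness comes from the equality case: if $|\alpha_i|=\alpha_1$, all terms $c_i(t)\beta^i$ must share a common argument; since $c_0(t)\neq 0$ (there exist type-$0$ cells of increment $0$) this common argument is $0$, and since the set of realized increments generates $\mathbb{Z}$ (it contains $0$ and $1$), this forces $\beta=\alpha_1$. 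Hence $|\alpha_i(t)|<\alpha_1(t)$ for $i\neq 1$ and $t\in(0,\tc)$.

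It remains to produce a uniform $\epsilon$. At $t=\tc$ the same triangle-inequality argument shows that the only root on the unit circle is $X=1$, which by Lemma~\ref{lemma:partialdiff} (where $\frac{\partial^2 P_{m,D}}{\partial X^2}(1,\tc)=\frac{m}{6}\gamma_{m,D}\neq 0$) is a double root accounting precisely for the merged pair $\{\alpha_1,1/\alpha_1\}$; thus every $\alpha_i(\tc)$ with $i\neq 1$ lies strictly inside the unit disk. Since for $t$ bounded away from $0$ the polynomial $X^r(1-P_{m,D}(X,t))$ keeps degree $2r$ (its leading coefficient $c_r(t)$ is positive), its roots vary continuously, and a compactness argument on $[\eta,\tc]$ — using that at most two roots, namely $\alpha_1$ and its reciprocal, can approach the unit circle, and only $\alpha_1$ does so from inside — gives $\sup_{t,\,i\neq 1}|\alpha_i(t)|<1$; for $t\in(0,\eta]$ one simply has $|\alpha_i(t)|\le\alpha_1(t)\le\alpha_1(\eta)<1$. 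Choosing $\epsilon$ below $1$ minus this supremum yields (ii). I expect this last uniform estimate near $\tc$ to be the main obstacle: there $\alpha_1(t)\to 1$, so the bound $|\alpha_i|\le\alpha_1$ degenerates, and one genuinely needs the exact double multiplicity of the root $X=1$ together with continuity of the roots to keep the subdominant $\alpha_i$ away from the unit circle.
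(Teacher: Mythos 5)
Your proof is correct and follows essentially the same route as the paper's: part (i) via the behaviour of $P_{m,D}(\cdot,t)$ on the positive real axis together with positivity of the coefficients, and part (ii) via the triangle inequality $|P_{m,D}(\lambda,t)|\le P_{m,D}(|\lambda|,t)$, an analysis of the equality case, and a continuity/compactness argument near $t=\tc$ --- which is exactly what the paper compresses into ``with equality if and only if $\lambda>0$'' and ``a compacity argument'', so your write-up is in fact a useful expansion of it.

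One small correction: your parenthetical claim that $c_0(t)\neq 0$ (existence of type-$0$ cells of increment $0$) is false when $D=\{1\}$ and $m\ge 4$, a case where claim (ii) is not vacuous since $r=m-2\ge 2$: there the unique up-step of the walk forces all down-steps to start at pairwise distinct heights, so no cell has increment $0$. This does not damage your argument: for a root $\beta$ with $|\beta|=\alpha_1$, the sum $P_{m,D}(\beta,t)=1$ is itself a positive real, so equality in the triangle inequality already forces every nonzero term $c_i(t)\beta^i$ to be a positive real; and since the increment $1$ is always realized (two consecutive down-steps exist because $(m-1)k\ge 2$ for some $k\in D$, using the standing assumption that $D\neq\{1\}$ when $m=2$), one has $c_1(t)>0$, hence $\beta>0$ and $\beta=\alpha_1$. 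So the anchor should be the positivity of the sum and the fact that $1$ lies in the support, not the nonvanishing of $c_0$.
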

In the rest of the paper, we will keep the renumbering of the roots given by the lemma.
The root $\alpha_1(t)$ is called the \emph{principal branch}.
\begin{proof}
We already observed that $1$ is a root of $P_{m,D}(X,t_c)$, and by
Lemma~\ref{lemma:partialdiff}, it is of multiplicity exactly two.
%So there exists an $i$ such that $\alpha_i(t_c)=1$, and we will assume that $i=1$.
%Now, since $P_D$ is symmetrical in $X$ and $X^{-1}$, and since it has positive 
%coefficients, a simple computation now shows that :
%$$\frac{\partial P_D}{\partial t} (1,t_c)=0
%\mbox{ and }
%\frac{\partial^2 P_D}{\partial t^2} (1,t_c)>0
%$$
%which prove that $1$ is root of multiplicity exactly two of $1-P_D$.

Now, for every $t\in(0,t_c)$ one has by positivity of the
coefficients $P_{m,D}(1,t)\leq P_{m,D}(1,t_c)=1$, and $P_{m,D}(0,t)=\infty$.
Moreover $X \mapsto P_{m,D}(X,t)$ is a decreasing function on $[0,1]$ 
(since for all $i$, $X^i+X^{-i}$ is) so that there exists
an unique $\alpha=\alpha(t) \in [0,1]$ such that $P_{m,D}(\alpha)=1$. 
%Moreover, for the
%same reason, 
Now, since $P_{m,D}$ has positive coefficients, $\alpha(t)$ is an increasing
function of $t$. This proves claim (i).

Now, for every $\lambda \in \mathbb{C}$,
one has $|P_{m,D}(\lambda)|\leq P_{m,D}(|\lambda|)$, with equality if and only
if $\lambda>0$. Hence if $|\lambda|\leq \alpha_1(t)$ one has
$P_{m,D}(\lambda,t)\leq 1$, with equality if and only if $\lambda=\alpha_1$. This, together with a compacity argument,  
implies claim (ii).
\end{proof}
Lemma~\ref{lemma:partialdiff} then gives:
\begin{lemma}
\label{lemma:devalpha1}
The following Puiseux expansion holds near $t=t_c$:
$$
\alpha_1(t) = 1 - \sqrt{\frac{12}{m(m-1)}}
\left(1-\frac{t}{\tc}\right)^{\frac{1}{2}} +
o\left(\left(1-\frac{t}{\tc}\right)^{\frac{1}{2}}\right) $$
\end{lemma}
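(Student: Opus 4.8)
The plan is to derive the expansion directly from the defining relation $P_{m,D}(\alpha_1(t),t)=1$ by a local Taylor analysis at the critical point $(X,t)=(1,\tc)$, feeding in the three partial derivatives computed in Lemma~\ref{lemma:partialdiff}. Set $G(X,t)=P_{m,D}(X,t)-1$; since $X=1$ stays away from the pole $X=0$ of the Laurent polynomial $P_{m,D}$, the function $G$ is analytic in a bi-disc around $(1,\tc)$, and $G(1,\tc)=0$. Lemma~\ref{lemma:partialdiff} supplies the crucial coefficients
\[
\frac{\partial G}{\partial X}(1,\tc)=0,\qquad
\frac{\partial^2 G}{\partial X^2}(1,\tc)=\frac{m}{6}\gamma_{m,D}=:2a,\qquad
\frac{\partial G}{\partial t}(1,\tc)=\frac{\gamma_{m,D}}{(m-1)\tc}=:b,
\]
where the hypotheses on $D$ guarantee $\gamma_{m,D}>0$, hence $a,b>0$. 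The vanishing of $\partial_X G$ is precisely the statement (already observed in the proof of Lemma~\ref{lemma:roots}) that $X=1$ is a \emph{double} root at $t=\tc$, which is what forces a square-root singularity rather than an analytic branch.

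The formal balance is then immediate: keeping the dominant terms of the Taylor expansion gives $a(X-1)^2+b(t-\tc)\approx 0$, that is $(\alpha_1-1)^2\approx\frac{b\tc}{a}\bigl(1-\frac{t}{\tc}\bigr)$, and a one-line computation yields $\frac{b\tc}{a}=\frac{12}{m(m-1)}$, in which the common factor $\gamma_{m,D}$ cancels. This already produces the announced constant $\sqrt{12/(m(m-1))}$; the remaining work is to turn this heuristic into a genuine Puiseux expansion.

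To make it rigorous I would desingularize the double root by the substitution $X=1+u\xi$, $t=\tc(1-u^2)$, with $u=(1-t/\tc)^{1/2}$. Using $\partial_X G(1,\tc)=0$ and $\frac{d}{du}(1-u^2)\big|_{u=0}=0$ one checks that $G(1+u\xi,\tc(1-u^2))$ vanishes to order $u^2$, so that
\[
\widetilde G(\xi,u):=u^{-2}\,G\bigl(1+u\xi,\tc(1-u^2)\bigr)
\]
extends to a function analytic near $u=0$ with $\widetilde G(\xi,0)=a\xi^2-b\tc$. Its roots $\xi=\pm\sqrt{b\tc/a}=\pm\sqrt{12/(m(m-1))}$ are simple, so $\partial_\xi\widetilde G\neq 0$ there, and the implicit function theorem furnishes an analytic branch $\xi(u)$. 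Then $\alpha_1(t)=1+u\,\xi(u)=1-\sqrt{12/(m(m-1))}\,u+O(u^2)$, which is the claimed expansion (in fact with error $O(1-t/\tc)$, stronger than the stated $o((1-t/\tc)^{1/2})$).

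Finally I would fix the sign: the two values $\xi(0)=\pm\sqrt{12/(m(m-1))}$ correspond to the two roots of $P_{m,D}(\cdot,t)-1$ that collide at $1$, and the minus sign is selected because Lemma~\ref{lemma:roots}(i) tells us $\alpha_1$ is real, increasing on $[0,\tc]$, and tends to $1$, so $\alpha_1(t)<1$ for $t<\tc$. The main obstacle is exactly this passage from the formal dominant balance to a bona fide Puiseux expansion: one must verify that $\widetilde G$ is analytic through $u=0$ (equivalently, invoke the Newton--Puiseux or Weierstrass preparation machinery at the double point) rather than merely truncate the Taylor series. All the arithmetic is done by Lemma~\ref{lemma:partialdiff}; the only delicate point is the analytic bookkeeping in the substitution.
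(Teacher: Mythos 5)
Your proposal is correct and follows essentially the same route as the paper: the paper obtains Lemma~\ref{lemma:devalpha1} directly from the three partial derivatives of Lemma~\ref{lemma:partialdiff} via the standard local analysis of the double root of $P_{m,D}(X,t)=1$ at $(X,t)=(1,\tc)$, which is exactly your dominant balance $a(X-1)^2+b(t-\tc)\approx 0$ with the sign fixed by Lemma~\ref{lemma:roots}(i). The only difference is that you spell out the desingularization/implicit-function step that the paper leaves implicit, which is a welcome (and correct) addition of rigor rather than a different method.
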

Let us now define, for $i\in\llbracket 1,r \rrbracket$, the following power
series in $t$:
\begin{eqnarray}
\label{eq:defCi}
C_i = \frac{1}{ t^{2r}\alpha_i\prod_{j} (1-\frac{1}{\alpha_i\alpha_j})
 \prod_{j\neq i} (\alpha_i-\alpha_j)}
\end{eqnarray}
Observe that $C_i$ is a well defined element of $C[[t]]$.
Moreover, the following partial fraction expansion holds:
\begin{eqnarray}
S_{m,D}(X,t)&=& \frac{1}{\prod_i (X-\alpha_i)(1-{\alpha_i}^{-1} X^{-1})}
\label{eq:parfrac}\\ &=& \sum_i \frac{C_i\alpha_i}{X-\alpha_i} + \sum_i
      \frac{C_i}{1-X\alpha_i}\label{eq:Sgeom} 
\end{eqnarray}
For all $n\in\mathbb{Z}$ we let $M_n(t)=\sum_{k=0}^\infty a_{k,n} t^k$ be the generating series of chains of type $0$ of total
increment $n$, by the size. Now, it easy to extract the coefficient of $X^n$ in Equation~\ref{eq:Sgeom},
via the following manipulations
\footnote{The author knows this trick from Mireille Bousquet-M\'elou.}
 in the ring $\mathbb{C}[X,X^{-1}][[t]]$:
\begin{eqnarray*}
S_{m,D}(X,t)&=& \sum_i \frac{X^{-1}C_i\alpha_i}{1-X^{-1}\alpha_i} + \sum_i \frac{C_i}{1-X\alpha_i}\\
&=& \sum_i \sum_{n=0}^{\infty} C_i {\alpha_i}^{n+1} X^{-n-1}
   -\sum_i \sum_{n=0}^{\infty} C_i {\alpha_i}^{n} X^{n}
\end{eqnarray*}
so that one obtains the generating function of chains of increment $n\in
\mathbb{Z}$:
\begin{eqnarray}
\label{eq:walks}
M_n(t)&=& "[X^{n}]S_{m,D}(X,t)" = \sum_i C_i(t) \alpha_i(t)^{|n|}
\end{eqnarray}
Observe that in the series $M_0(t)$, the empty walk of length $0$ is counted.

\subsection{Chains of all types.}
We will see now that the generating series of chains of type $0$, and of type
$\tau\neq 0$ are closely related. To put this relation in a more fancy form, we
consider not only chains, but chains where a planar mobile has been attached to
each labelled vertex.
For all $\tau\in\llbracket 0 ,m-1 \rrbracket$, we let 
$H^\tau_n(z)$ be the generating series of chains of type $\tau$, that carry on
each labelled corner a planar mobile (which can eventually be trivial). The
variable $z$ counts the total number of flagged edges.
%
%=\sum_{\mathfrak{c}} z^{|\mathfrak{c}|}$, where the sum is
%taken over all the chains of type $\tau$ and total increment $n$, where one
%eventually trivial planar mobile has been attached to each labelled corner.

In the case $\tau=0$, this series is easily related to $M_n$:
since a chain of type $0$ and size $k$ has $(m-1)k$ labelled vertices, and $k$
flagged edges, $H_n$ is obtained from $M_n$ by the substitution $z\leftarrow
z\Tplanar(z)^{m-1}(z)$.
\begin{definition}
In the rest of the paper, we note $t(z):=z\Tplanar(z)^{m-1}$
\end{definition}
We have then: $$
H^0_n(z) = M_n\left(t(z)\right) = 
\sum_i C_i(t(z)) \alpha_i(t(z))^{|n|}
$$

We now examine the case $\tau\in\llbracket 1, m-1 \rrbracket$. For such $\tau$,
we let $
P_{m,D}^\tau(X,t,u)
$
be the generating polynomial of elementary cells of type $\tau$, where $X$,
$t$, $u$ count respectively the increment, the number of black vertices, and
the number of labelled vertices.
We also let $r^{(\tau)}_k(X)$ be the generating series of elementary white stars
of total degree $mk$,  with exactly two special split-edges, one of type $\tau$ and one
of type $m-\tau$. Here, the variable $X$ counts the increment between the two special edges.
Since such a star has exactly $(m-1)k-1$ labelled vertices, $k-1$ black
vertices, and since the generating series of black stars of degree $m$ with two
special edges is $1+X+\dots+X^{m-2}$, one has, recalling that a cell of type
$\tau$ is the juxtaposition of a white and a black star: 
\begin{eqnarray}
\label{eq:PDtau}
P_{m,D}^{(\tau)}(X,t,u) =
(1+X+\dots+X^{m-2})\sum_{k\in D}t^{k} u^{(m-1)k-1}r^{(\tau)}_k(X).
\end{eqnarray}
\begin{figure}[h]
\centerline{\includegraphics[scale=1.0]{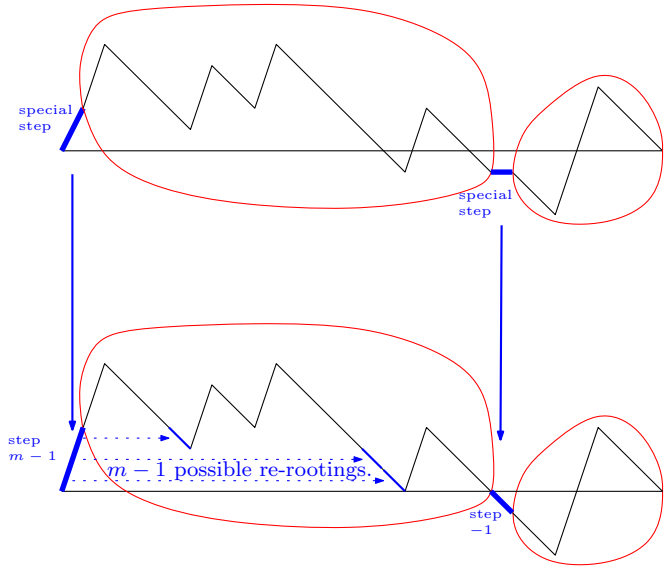}}
\caption{Walks with two special steps are in correspondance with walks with two
distinguished steps, of increments $m-1$ and $-1$ (vertical arrows). These walks
can be re-rooted in $m-1$ different ways to obtain walks with two distinguished steps $-1$ (horizontal arrows). 
In the re-rooting operation, the increment between the two steps is modified by a quantity among 
$0,\dots,m-2$, inducing a factor $1+X+\dots+X^{m-2}$ in the generating
series.}
\label{fig:PDtau}
\end{figure}
Now, by Lemma~\ref{lemma:mwalk}, $r^{(\tau)}_k(X)$ is also the generating series
of walks of length $mk$, with $k-1$ steps $m-1$, $(m-1)k-1$ steps $-1$, beginning with a step $\tau-1$
and ending by a step $m-\tau-1$. These walks are in bijection with walks with
of length $km$ with only steps $-1$ and $m-1$, beginning with a step $m-1$, and
with a distinguished step $-1$: to see that, exchange the steps $\tau$, $m-2-\tau$ by two steps $-1,m-1$
\footnote{In particular, $r^{(\tau)}_k(X)$ does not depend on $\tau$.}. 
Since in that walk the only decreasing steps are steps $-1$,
the distinguished step $m-1$ lies in front of exactly $m-1$ steps $-1$.
Hence (see Figure~\ref{fig:PDtau})
$
(1+X+\dots+X^{m-2})r_k(X)
$
is the generating series of walks with two distinguished steps $-1$, 
where $X$ counts the increment between them. 

Observe that these two distinguished steps are
not necessarily distinct. If they are equal, we have a circular walk with one
marked step $-1$: there are ${mk-1 \choose k}$ of those.
If they are not equal, the object considered is, up to the correspondance of 
Lemma~\ref{lemma:mwalk}, a cell of type $0$. Hence we have:
$$
(1+X+\dots+X^{m-2})r_k(X)
 = 
{mk-1 \choose k} 
+ [t^k]P_{m,D}(X,t)
$$
This gives with Equation~\ref{eq:PDtau}:
$$
\Tplanar(z) P_{m,D}^\tau(X,z,\Tplanar(z)) = 
\left(\sum_{k\in D}{mk-1 \choose k}t(z)^k + P_{m,D}(X,t(z)) \right)
$$
And using Equation~\ref{eq:planar} gives:
$$
\frac{\Tplanar(z)}{1-P_{m,D}(X,t(z))} =
\frac{1}{1-P_{m,D}^\tau(X,z,\Tplanar(z))}$$
Now, observe that the coefficient of $X^n$ is the right-hand side is precisely
the series $H_n^\tau(z)$. On the other hand, the coefficient of $X^n$ in the
left-hand side equals $\Tplanar M_n(t(z))$. This gives 
the following proposition, which is the key that relates
the enumeration of $m$-hypermaps and $m$-constellations:
\begin{proposition}
For all $\tau\in\llbracket 1 ,m-1 \rrbracket$, and for all $n\in\mathbb{Z}$, we
have:
\begin{eqnarray}
\label{eq:Htau}
H_n^\tau(z) = \Tplanar(z) H_n^0(z)
\end{eqnarray} 
\end{proposition}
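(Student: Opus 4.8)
The plan is to deduce the identity by a single coefficient extraction, since the genuinely substantial work has already been carried out in assembling the \emph{kernel identity}
$$
\frac{\Tplanar(z)}{1-P_{m,D}(X,t(z))} = \frac{1}{1-P_{m,D}^\tau(X,z,\Tplanar(z))}
$$
displayed just above the statement. I would work in $\mathbb{C}[X,X^{-1}][[t]]$ after the substitution $t\leftarrow t(z)=z\Tplanar(z)^{m-1}$; the operator $[X^n]$ is well defined there because each $M_n$ was already shown to be a genuine element of $\mathbb{C}[[t]]$.

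First I would extract $[X^n]$ from the left-hand side. Since $S_{m,D}(X,t)=1/(1-P_{m,D}(X,t))$ and $M_n(t)=[X^n]S_{m,D}(X,t)$ by definition, and since the factor $\Tplanar(z)$ does not involve $X$, one gets
$$
[X^n]\,\frac{\Tplanar(z)}{1-P_{m,D}(X,t(z))} = \Tplanar(z)\,M_n(t(z)) = \Tplanar(z)\,H_n^0(z),
$$
the final equality being precisely the relation $H_n^0(z)=M_n(t(z))$ established in the case $\tau=0$.

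Next I would read the right-hand side combinatorially. A chain of type $\tau$ is by definition a finite sequence of cells of type $\tau$, with the increment additive along the sequence; decorating every labelled corner by a planar mobile is exactly the substitution $u\leftarrow\Tplanar(z)$ in $P_{m,D}^\tau(X,t,u)$, because $u$ marks labelled vertices and $\Tplanar(z)$ enumerates the planar mobiles rooted at a labelled corner. Thus $P_{m,D}^\tau(X,z,\Tplanar(z))$ is the generating series of a single decorated cell of type $\tau$ graded by increment $X$, and the sequence construction produces $1/(1-P_{m,D}^\tau(X,z,\Tplanar(z)))$ for decorated chains; its coefficient of $X^n$ is $H_n^\tau(z)$ by definition. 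Equating the two extractions yields $H_n^\tau(z)=\Tplanar(z)H_n^0(z)$.

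The one point deserving care---and the step I would treat as the main obstacle---is the three-variable bookkeeping underlying the kernel identity itself: verifying that the lone factor $\Tplanar(z)$ multiplying the left-hand kernel is exactly what promotes the exponent $(m-1)k-1$ in $P_{m,D}^{(\tau)}$ (the number of labelled vertices of a white star carrying two special split-edges) to the exponent $(m-1)k$, so that $z^k\Tplanar(z)^{(m-1)k}=t(z)^k$, and that under $t\leftarrow t(z)$ the variable $z$ marks flagged edges consistently on both sides. Once this matching is confirmed, the coefficient extraction is purely formal and the proposition follows.
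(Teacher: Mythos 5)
Your two coefficient extractions are correct, and they coincide with how the paper itself concludes: $[X^n]$ of the left-hand side of the kernel identity is $\Tplanar(z)M_n(t(z))=\Tplanar(z)H_n^0(z)$, and $[X^n]$ of the right-hand side is $H_n^\tau(z)$, since a decorated chain of type $\tau$ is a sequence of decorated cells and decoration amounts to the substitutions $t\leftarrow z$, $u\leftarrow\Tplanar(z)$ in $P^{\tau}_{m,D}$. The problem is that your argument rests entirely on the kernel identity, and in this paper that identity is not a previously established result: it \emph{is} the proof of the very proposition under discussion. The proposition is stated as the immediate corollary of that identity, which is itself obtained by a derivation (the decomposition of $P^{\tau}_{m,D}$ through the polynomials $r^{(\tau)}_k$, a bijection on walks, and Equation~\ref{eq:planar}) occupying the whole subsection. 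As a blind proof, assuming it begs the question; the part you actually supply is the part the paper dispatches in two sentences.

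Concretely, what is missing is the derivation of the kernel identity, whose substance is combinatorial rather than bookkeeping. Writing $r^{(\tau)}_k(X)$ for the generating polynomial (by increment) of white elementary stars of degree $mk$ with two special split-edges of types $\tau$ and $m-\tau$, the paper proves, by exchanging the two special steps for a step $-1$ and a step $m-1$ and then re-rooting the resulting walk in $m-1$ ways, that
$$
(1+X+\dots+X^{m-2})\,r^{(\tau)}_k(X) \;=\; {mk-1 \choose k} \;+\; [t^k]P_{m,D}(X,t),
$$
i.e.\ a type-$\tau$ cell, multiplied by the black-star factor $1+X+\dots+X^{m-2}$, is equinumerous with ``a circular walk with one marked step $-1$, or a cell of type $0$'' (in particular $r^{(\tau)}_k$ does not depend on $\tau$, which is not obvious a priori). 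It is precisely the term $\sum_{k\in D}{mk-1 \choose k}t(z)^k$, recombined through Equation~\ref{eq:planar} into $\Tplanar(z)-1$, that creates the factor $\Tplanar(z)$ on the left of the kernel. By contrast, the issue you single out as the ``main obstacle'' --- that $z^k\Tplanar(z)^{(m-1)k}=t(z)^k$ and that the exponent $(m-1)k-1$ gets promoted to $(m-1)k$ --- is routine once the displayed identity is known. Without proving that identity (or some equivalent comparison between type-$\tau$ and type-$0$ cells), the proposition has not been established.
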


\section{Generating series of mobiles}

\subsection{Translating Proposition~\ref{prop:algo} into generating series.}
The previous section gave us all the building blocks to translate
Proposition~\ref{prop:algo} in terms of generating series.

Let $(\mathfrak{s},\tau,F,\mathfrak{a},\lambda)$ be a full scheme of genus $g$.
We are going to use Algorithm~\ref{algo:reconstruct}, and substitute each edge
of $\mathfrak{s}$ with a chain. 
We first choose a compatible labelling $(l_v)_{v\in V(\mathfrak{s})}$ of that
scheme. We need a little discussion on a
special case. Imagine that the labelling imposes to substitute an edge
$e$ of type $0$ to a chain of type $0$ of increment $\Delta(e)=0$. Then, if one of the extremities of
$e$ is associated with a non trivial  nodal star, it is possible to substitute
$e$ to an empty chain ; otherwise, if the two extremities are associated with
the trivial nodal star $\circ$, the chain of length $0$ is excluded: this would
identify the two vertices of the chain.
Hence, if $e$ is an edge of $\mathfrak{s}$, of
extremities $v_1$ and $v_2$, we set: $$
r(e) = \left\{\begin{array}{l} 1 \mbox{ if }
\tau(e)=0 \mbox{ and }
\Delta(e)=0 \mbox{ and }
F_{v_1}=F_{v_2}=\circ\\
0 \mbox{ otherwise.}
\end{array}
\right.
$$
Then the edge $e$ can be replaced by the empty walk if and only if
$r(e)\neq 1$. Observe that $r(e)$ depends actually only on the full scheme, but
not on the compatible labelling $(l_v)$ itself.

We let $|\mathfrak{a}|=|\mathfrak{a}_1|+|\mathfrak{a}_2|$,
$\langle\mathfrak{a}\rangle=\langle\mathfrak{a}_1\rangle+\langle\mathfrak{a}_2\rangle$, 
and similarly $|F|=\sum_v |F_v|$ and $\langle F\rangle=\sum_v \langle
F_v\rangle$.
Hence the series:
$$
R_{\mathfrak{s},\tau,F,\mathfrak{a},\lambda}(z):=
z^{\langle \mathfrak{a} \rangle+\langle \mathfrak{c} \rangle}
t(z)^{| \mathfrak{a} |+|\mathfrak{c} |}
\sum_{\scriptsize labellings}
\prod_{e\in E(\mathfrak{s})}
\left(H_{\Delta(e)}^{\tau(e)}(z)-r(e)\right)
$$
is the generating series of objects generated by the first four steps of
Algorithm~\ref{algo:reconstruct}. Observe the first and second factor, that
accout respectively for the fact that black vertices appearing in the full
scheme must be counted, and that planar mobiles must be attached also on the
labelled vertices of the full scheme. 

We now let $R_g(z)$ be the generating series of all mobiles of genus $g$, by the
number of black vertices. Again, dependency in $m$ and $D$ are omitted in the
notation. Since a mobile with $k$ black vertices has in total $mk$ edges,
step 5 in Algorithm~\ref{algo:reconstruct} corresponds to an operator
$m\frac{zd}{dz}$ on the generating series. Hence, in terms of generating series,
Proposition~\ref{prop:algo} admits the following reformulation:
\begin{corollary}\label{cor:algo}
\begin{eqnarray}\label{eq:Rgsum}
R_g(z) =
m\frac{zd}{dz}
\sum_{(\mathfrak{s},\tau,F,\mathfrak{a},\lambda)\in\mathcal{F}_g}
\frac{1}{2|E(\mathfrak{s})|} 
R_{\mathfrak{s},\tau,F,\mathfrak{a},\lambda}(z)\end{eqnarray}
\end{corollary}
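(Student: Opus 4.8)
The plan is to read the identity off Proposition~\ref{prop:algo}, viewing the right-hand side of (\ref{eq:Rgsum}) as nothing more than the generating-series transcription of Algorithm~\ref{algo:reconstruct}. First I would recall that, by the definition given just above the statement, $R_{\mathfrak{s},\tau,F,\mathfrak{a},\lambda}(z)$ is the generating series, counted by the number of black unlabelled vertices, of all configurations produced by steps 1--4 of the algorithm once the full scheme $\mathfrak{f}=(\mathfrak{s},\tau,F,\mathfrak{a},\lambda)$ has been fixed: the monomial prefactor records the black vertices and the planar mobiles carried by the full scheme itself, the inner sum ranges over the compatible labellings of step 2, and the product $\prod_{e}\left(H_{\Delta(e)}^{\tau(e)}(z)-r(e)\right)$ substitutes along each edge $e$ a chain of the prescribed type and increment bearing a planar mobile on each labelled corner (steps 3 and 4), the subtraction of $r(e)$ forbidding the empty chain precisely in the degenerate case isolated before the statement. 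Summing over the finite set $\mathcal{F}_g$ therefore enumerates, with multiplicity, all outputs of steps 1--4.

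Next I would implement step 5 through the operator $m\frac{zd}{dz}$. By Definition~\ref{def:mobile}, every edge of a mobile joins a white unlabelled vertex either to a labelled vertex or to a black unlabelled vertex, so it has a unique white unlabelled endpoint; orienting the chosen root ``leaving a white unlabelled vertex'' is thus forced, and rooting amounts to selecting one of the edges. Since a mobile with $k$ black vertices has $mk$ edges, marking such a root multiplies the coefficient of $z^k$ by $mk$, which is exactly the action of $m\frac{zd}{dz}$; the secondary root carried by $\mathfrak{s}$ moreover destroys any automorphism, so the $mk$ rootings are pairwise distinct and no symmetry correction intervenes. As $|E(\mathfrak{s})|$ is constant on each $\mathfrak{f}$, the scalar $\frac{1}{2|E(\mathfrak{s})|}$ commutes with the operator, and $m\frac{zd}{dz}R_{\mathfrak{s},\tau,F,\mathfrak{a},\lambda}(z)$ is the generating series of the doubly-rooted mobiles produced by $\mathfrak{f}$ through steps 1--5.

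Finally I would invoke the enumeration of Proposition~\ref{prop:algo}, whose proof realizes Algorithm~\ref{algo:reconstruct} as a bijection onto doubly-rooted mobiles under which a single mobile whose scheme has $k'$ edges corresponds to exactly $2k'$ doubly-rooted mobiles. Hence in $\sum_{\mathfrak{f}} m\frac{zd}{dz}R_{\mathfrak{f}}(z)$ a mobile $\mathfrak{t}$ with scheme of $k'$ edges is produced once for each of its $2k'$ secondary rootings, and each occurrence is weighted by $\frac{1}{2|E(\mathfrak{s})|}=\frac{1}{2k'}$, since the underlying scheme of the contributing full scheme then has $k'$ edges. The total weight of $\mathfrak{t}$ is $2k'\cdot\frac{1}{2k'}=1$ times $z$ raised to the number of its black vertices, so the right-hand side of (\ref{eq:Rgsum}) reconstitutes $R_g(z)$.

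I expect the one genuine point of care to be the bookkeeping of weights, rather than any new combinatorial input: one must verify that the exponent of $z$ tracks the number of black vertices coherently through the prefactor, through the planar mobiles attached at labelled corners (via the substitution $t(z)=z\Tplanar(z)^{m-1}$ built into the series $H^{\tau}_n$ and hence into $R_{\mathfrak{f}}$), and through the differential operator, and that the overcounting factor of Proposition~\ref{prop:algo} depends only on $|E(\mathfrak{s})|$, so that it can be absorbed scheme by scheme. All the properly combinatorial work is already contained in Proposition~\ref{prop:algo}.
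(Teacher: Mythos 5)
Your proposal is correct and follows essentially the same route as the paper, which states the corollary as an immediate translation of Proposition~\ref{prop:algo}: $R_{\mathfrak{s},\tau,F,\mathfrak{a},\lambda}(z)$ counts the outputs of steps 1--4, the operator $m\frac{zd}{dz}$ implements the rooting of step 5 (since a mobile with $k$ black vertices has $mk$ edges), and the factor $\frac{1}{2|E(\mathfrak{s})|}$ cancels the $2k$-fold multiplicity from the doubly-rooted-mobile argument. Your additional checks (that the orientation of the root is forced, and that the overcounting factor is constant across the full schemes producing a given mobile) are exactly the bookkeeping the paper leaves implicit.
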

\begin{remark}
It follows from remark~\ref{rem:algobipartite} that the generating series of
mobiles corresponding to $m$-constellations of degree set $mD$ can be written:
\begin{eqnarray}
R^{cons}_g(z) =
m\frac{zd}{dz}
\sum_{(\mathfrak{s},\vec 0,F,\mathfrak{a},\lambda)\in\mathcal{F}_g}
\frac{1}{2|E(\mathfrak{s})|} 
R_{\mathfrak{s},\tau,F,\mathfrak{a},\lambda}(z)\end{eqnarray}
where the sum is restricted to the full schemes
$(\mathfrak{s},\tau,F,\mathfrak{a},\lambda)\in\mathcal{F}_g$ such that $\tau$
associates $0$ to all edges.
\end{remark}

\subsection{An exact computation.}
We fix a full scheme $\mathfrak{f}=(\mathfrak{s},\tau,F,\mathfrak{a},\lambda)$.
We let $E_1$ be the set of edges of $\mathfrak{s}$ such that $r(e)=1$, and $E_2$ be its
complementary.
 
To lighten notations, we note $\Tplanar$, $C_i$, $\alpha_i$ for
$\Tplanar(z)$, $C_i(t(z))$ and $\alpha_i(t(z))$, respectively.
We also note $z^\mathfrak{f}:=z^{\langle \mathfrak{a} \rangle+\langle \mathfrak{c} \rangle}
t(z)^{| \mathfrak{a} |+|\mathfrak{c} |}.
$
Then we have from Equation~\ref{eq:Htau}
\begin{eqnarray}
R_\mathfrak{f} &=& 
z^\mathfrak{f} \sum_{\delta_1,..\delta_M>0}
\prod_{e\in E_1} \left( \sum_{i=1}^rC_i - 1\right) 
\prod_{e\in E_2} \left(
\Tplanar^{\mathbbm{1}_{\tau(e)\neq0}}\sum_{i=1}^rC_i\alpha_i^{|\Delta(e)|}
\right)
\\&=&
z^\mathfrak{f} 
\Tplanar^{n_{\neq}}
\left( \sum_{i=1}^rC_i - 1\right)^{|E_1|} 
\sum_{\delta_1,..\delta_M>0}
\prod_{e\in E_2} \left(
\sum_{i=1}^rC_i\alpha_i^{|a_{F,\mathfrak{a}}(e)+\sum_{j=1}^M A_{e,j}\delta_j |}
\right)\nonumber
\end{eqnarray}
where $n_{\neq}$ is the number of edges of $\mathfrak{s}$ of type $\neq 0$.
Now we have by expanding the product:
\begin{eqnarray}
&&\sum_{\delta_1,..\delta_M>0}
\prod_{e\in E_2} \left(
\sum_{i=1}^rC_i\alpha_i^{|a_{F,\mathfrak{a}}(e)+\sum_{j=1}^M A_{e,j}\delta_j |}
\right)\nonumber\\
&=&
\sum_{\delta_1,..\delta_M>0}
\sum_{i\in \llbracket1,r\rrbracket^{E_2}} \prod_{e\in E_2}
C_{i_e} \alpha_{i_e}^{|a_{F,\mathfrak{a}}(e)+\sum_{j=1}^M A_{e,j}\delta_j
|}\nonumber\\
&=&
\sum_{i\in \llbracket1,r\rrbracket^{E_2}} \prod_{e\in E_2}
C_{i_e} 
\sum_{\delta_1,..\delta_M>0}\prod_{e\in E_2}
\alpha_{i_e}^{|a_{F,\mathfrak{a}}(e)+\sum_{j=1}^M A_{e,j}\delta_j
|}\label{eq:interm}
\end{eqnarray}
Now, observe that when the $\delta_i$'s are large enonugh (say $\geq$ some
number $K$), all the quantities $a_{F,\mathfrak{a}}(e)+\sum_{j=1}^M
A_{e,j}\delta_j$ are positive, so that we can remove the absolute value in the
sum above. If we define the polynomial:
$$\mathfrak{p}(\alpha_1,\ldots,\alpha_r):=
\sum_{\delta_1,..\delta_M< K}\prod_{e\in E_2}
\alpha_{i_e}^{|a_{F,\mathfrak{a}}(e)+\sum_{j=1}^M A_{e,j}\delta_j|}
$$
then the quantity~\ref{eq:interm} rewrites:
\begin{eqnarray*}&&
\sum_{i\in \llbracket1,r\rrbracket^{E_2}} \prod_{e\in E_2}
C_{i_e} \left(\mathfrak{p}(\alpha_1,\ldots,\alpha_r)	+
\sum_{\delta_1,..\delta_M>K}\prod_{e\in E_2}
\alpha_{i_e}^{a_{F,\mathfrak{a}}(e)+\sum_{j=1}^M A_{e,j}\delta_j}
\right)\\
&=&
\sum_{i\in \llbracket1,r\rrbracket^{E_2}} \prod_{e\in E_2}
C_{i_e} \left(\mathfrak{p}(\alpha_1,\ldots,\alpha_r)	+
\prod_{e\in E_2} \alpha_{i_e}^{a_{F,\mathfrak{a}}(e)}
\prod_{j=1}^M
\frac{\left(\prod_e \alpha_{i_e}^{A_{e,j}}\right)^K}{1-\prod_e
\alpha_{i_e}^{A_{e,j}}}\right)
\end{eqnarray*}
where passing from the first to the second line is just a geometric summation
on each variable $\delta_j$. Observe that it remains only sums and products
aver finite sets. This gives the statement:
\begin{proposition}
The series $R_\mathfrak{f}(z)$ is an algebraic series of $z$, given by the
following expression:
\begin{eqnarray}\label{eq:Rfalgebraic}
R_\mathfrak{f}(z)=z^\mathfrak{f}\Tplanar^{n_{\neq}}
\left( \sum_{i=1}^rC_i - 1\right)^{|E_1|} 
\sum_{i\in \llbracket1,r\rrbracket^{E_2}} \prod_{e\in E_2}
C_{i_e} \left(\mathfrak{p}(\alpha_1,\ldots,\alpha_r)	+
\prod_{e\in E_2} \alpha_{i_e}^{a_{F,\mathfrak{a}}(e)}
\prod_{j=1}^M
\frac{\left(\prod_e \alpha_{i_e}^{A_{e,j}}\right)^K}{1-\prod_e
\alpha_{i_e}^{A_{e,j}}}\right)
\end{eqnarray}
\end{proposition}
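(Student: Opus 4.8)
The plan is to start from the definition of $R_{\mathfrak{s},\tau,F,\mathfrak{a},\lambda}(z)$ and unfold it using the chain series computed in the previous section, turning the a priori infinite sum over compatible labellings into a finite combination of multidimensional geometric series. First I would insert into each factor $H^{\tau(e)}_{\Delta(e)}(z)-r(e)$ the two facts already proved: the identity $H^{\tau}_n(z)=\Tplanar(z)H^0_n(z)$ of Equation~\ref{eq:Htau} for $\tau\neq0$, and the partial-fraction form $H^0_n(z)=\sum_{i=1}^r C_i\alpha_i^{|n|}$ coming from Equation~\ref{eq:walks}. Splitting $E(\mathfrak{s})$ into the set $E_1$ of edges with $r(e)=1$ and its complement $E_2$, I note that $r(e)=1$ forces $\tau(e)=0$ and $\Delta(e)=0$, so each $E_1$-factor equals the labelling-independent constant $\sum_i C_i-1$ and pulls out of the labelling sum, along with the global factor $\Tplanar^{n_{\neq}}$ produced by the edges of nonzero type. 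This isolates the genuinely labelling-dependent part as $\sum_{\delta>0}\prod_{e\in E_2}(\sum_i C_i\alpha_i^{|\Delta(e)|})$.

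Next I would distribute the product over $E_2$, indexing the choice of branch on each edge by a multi-index $i\in\llbracket 1,r\rrbracket^{E_2}$, and exchange the finite sum over $i$ with the infinite sum over $\delta\in(\mathbb{N}_{>0})^M$. Since $\Delta(e)=a_{F,\mathfrak{a}}(e)+\sum_j A_{e,j}\delta_j$ with $A_{e,j}=\mathbbm{1}_{e_-<j\le e_+}$, the inner sum becomes, for each fixed $i$, the lattice sum $\sum_{\delta>0}\prod_{e\in E_2}\alpha_{i_e}^{|a_{F,\mathfrak{a}}(e)+\sum_j A_{e,j}\delta_j|}$, which I must evaluate in closed form.

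The main obstacle is the absolute values, which block a direct geometric summation over the unbounded cone $(\mathbb{N}_{>0})^M$. The key point is that every affine form $a_{F,\mathfrak{a}}(e)+\sum_j A_{e,j}\delta_j$ is nondecreasing in each $\delta_j$, so there is a threshold $K$ past which all of them are simultaneously positive. I would therefore split the lattice into the bounded box where some $\delta_j<K$, whose contribution is a Laurent polynomial $\mathfrak{p}(\alpha_1,\dots,\alpha_r)$ in the branches, and the region where all exponents are positive; there the absolute values drop, the exponents become linear in the $\delta_j$, and a geometric summation in each variable yields the rational factor $\prod_{e\in E_2}\alpha_{i_e}^{a_{F,\mathfrak{a}}(e)}\prod_{j=1}^M\frac{(\prod_e\alpha_{i_e}^{A_{e,j}})^K}{1-\prod_e\alpha_{i_e}^{A_{e,j}}}$. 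To justify that these geometric series are legitimate formal objects, I would observe that $P_{m,D}(X,0)=0$ forces each finite branch $\alpha_i$ to vanish at $t=0$, so every $\prod_e\alpha_{i_e}^{A_{e,j}}$ has positive valuation and its denominator is invertible; here connectedness of $\mathfrak{s}$ is used to guarantee that each cut index $j$ is spanned by at least one edge of $E_2$ (indeed an edge with $A_{e,j}=1$ has $\lambda(e_+)>\lambda(e_-)$, hence $r(e)\neq1$), so the summation variable genuinely appears. Reassembling the bounded and unbounded pieces over all $i$ gives exactly Equation~\ref{eq:Rfalgebraic}.

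Finally, algebraicity drops out of the closed form: $\Tplanar$ is algebraic over $\mathbb{C}(z)$ by Equation~\ref{eq:planar}, hence so is $t(z)=z\Tplanar^{m-1}$; each $\alpha_i$ is a root of the polynomial $1-P_{m,D}(X,t(z))$ and is therefore algebraic, and each $C_i$ is a rational expression in the $\alpha_i$ and $t$ by Equation~\ref{eq:defCi}. As Equation~\ref{eq:Rfalgebraic} builds $R_\mathfrak{f}$ from these quantities through finitely many sums, products, and quotients, $R_\mathfrak{f}(z)$ is algebraic over $\mathbb{C}(z)$.
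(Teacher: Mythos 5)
Your proposal follows essentially the same route as the paper's own proof: substitute $H^{\tau}_n(z)=\Tplanar(z)H^0_n(z)$ and the partial-fraction form $H^0_n(z)=\sum_i C_i\alpha_i^{|n|}$, pull the labelling-independent $E_1$-factors and the prefactor $\Tplanar^{n_{\neq}}$ out of the sum over labellings, expand the product over $E_2$ into multi-indices $i\in\llbracket 1,r\rrbracket^{E_2}$, exchange the finite and infinite sums, and split the lattice sum at a threshold $K$ into a part collected in $\mathfrak{p}$ and a region where the absolute values drop and coordinate-wise geometric summation produces the rational factor with denominators $1-\prod_e\alpha_{i_e}^{A_{e,j}}$. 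Your two added justifications --- that each finite branch $\alpha_i$ vanishes at $t=0$ so the geometric series are legitimate formal power series (using that every cut index $j$ is crossed by an edge of $E_2$), and the closing argument that algebraicity of $\Tplanar$, $t(z)$, the $\alpha_i$ and the $C_i$ yields algebraicity of $R_\mathfrak{f}$ --- are points the paper leaves implicit, and they are correct.
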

One should not worry to much about the form of the last equation. In the
asymptotic regime, many terms will disappear, and it will look much nicer.

\subsection{The singular behaviour of $R_\mathfrak{f}$.}

\begin{lemma}
The radius of convergence of $R_\mathfrak{f}(z)$ is at least $\zc$.
\end{lemma}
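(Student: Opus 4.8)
The plan is to use that $R_\mathfrak{f}(z)$ is a generating series with \emph{non-negative} coefficients, so that its radius of convergence is governed by its behaviour on the positive real axis: it suffices to show that $R_\mathfrak{f}(z)<\infty$ for every real $z\in[0,\zc)$, and this forces the radius to be at least $\zc$. Rather than checking that the denominators of the closed form~\ref{eq:Rfalgebraic} never vanish, I would bound term by term the defining sum over compatible labellings.

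Fix a real $z\in[0,\zc)$ and set $t=t(z)=z\Tplanar(z)^{m-1}$. Since $\Tplanar$ has non-negative coefficients, $t(z)$ is non-negative and increasing on $[0,\zc]$ with $t(\zc)=\tc$, hence $t\in[0,\tc)$; in particular $\Tplanar(z)$ is finite. For this fixed $t$, the series $S_{m,D}(\cdot,t)$ is a rational function of $X$ whose only poles are the $\alpha_i(t)$ and $\alpha_i(t)^{-1}$, and by Lemma~\ref{lemma:roots} every root satisfies $|\alpha_i(t)|\le\rho<1$ for some $\rho=\rho(z)$ (strictly, because $\alpha_1(t)<1$ for $t<\tc$ and $|\alpha_i(t)|<1-\epsilon$ for $i\neq 1$). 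Thus $S_{m,D}(\cdot,t)$ is analytic in the annulus $\rho<|X|<\rho^{-1}$, and its Laurent coefficients decay geometrically: there are constants $K$ and $\rho<\rho'<1$ (depending on $z$ but not on $n$) with $|M_n(t)|\le K\,\rho'^{\,|n|}$ for all $n$. By Equations~\ref{eq:walks} and~\ref{eq:Htau} this gives $|H^{\tau}_n(z)|\le K'\,\rho'^{\,|n|}$ for all $n\in\mathbb Z$ and all $\tau\in\llbracket 0,m-1\rrbracket$, with $K'$ independent of $n$.

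It remains to control the sum over labellings $\delta\in(\mathbb N_{>0})^{M}$. Up to the finitely many bounded factors $z^\mathfrak{f}$, $\Tplanar^{n_{\neq}}$ and $(\sum_i C_i-1)^{|E_1|}=(M_0(t)-1)^{|E_1|}$ (finite since $\sum_n M_n(t)=S_{m,D}(1,t)<\infty$ for $t<\tc$), the summand is a product $\prod_{e}\bigl(H^{\tau(e)}_{\Delta(e)}(z)-r(e)\bigr)$, bounded in modulus by a constant times $\rho'^{\,\sum_{e\in E_2}|\Delta(e)|}$. The key observation is that every level $j\in\llbracket 1,M\rrbracket$ is straddled by an edge of $E_2$: since $\lambda$ is onto $\llbracket 0,M\rrbracket$, the vertex set splits into the two non-empty parts $\{\lambda<j\}$ and $\{\lambda\ge j\}$, and connectedness of $\mathfrak s$ yields an edge $e$ with $\lambda(e_-)<j\le\lambda(e_+)$, i.e. $A_{e,j}=1$; this edge has $\lambda(e_+)\neq\lambda(e_-)$, hence cannot satisfy $r(e)=1$ and so lies in $E_2$. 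Therefore $\sum_{e\in E_2}A_{e,j}\ge 1$ for every $j$, and using $\delta_j\ge 1$, $A_{e,j}\ge 0$,
$$
\sum_{e\in E_2}|\Delta(e)|\;\ge\;\sum_{e\in E_2}\Delta(e)
=\sum_{e\in E_2}a_{F,\mathfrak a}(e)+\sum_{j=1}^{M}\Bigl(\sum_{e\in E_2}A_{e,j}\Bigr)\delta_j
\;\ge\;\sum_{j=1}^{M}\delta_j-C'
$$
for a constant $C'$ independent of $\delta$. Consequently the multiple sum is dominated by $\rho'^{-C'}\prod_{j=1}^{M}\sum_{\delta_j>0}\rho'^{\,\delta_j}<\infty$, so $R_\mathfrak{f}(z)<\infty$, which proves the claim.

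The main obstacle is the geometric-decay bookkeeping of the last paragraph, and within it the graph-theoretic point that every label gap is crossed by an edge of $E_2$; this rests on the fact that an edge with $r(e)=1$ has equal endpoint levels and therefore crosses no gap, so that no direction $\delta_j$ escapes the decay coming from the roots. Once that is secured, both the geometric bound on $H^\tau_n$ (via the annulus of analyticity of $S_{m,D}$) and the convergence of the resulting multi-geometric series are routine.
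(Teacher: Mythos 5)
Your proof is correct, but it takes a genuinely different route from the paper's. The paper disposes of the lemma by a pure coefficient-domination argument: it relaxes the requirement that the increments of the substituted chains come from a compatible labelling, so that each edge of type $0$ (resp.\ of type $\neq 0$) independently carries an arbitrary chain; the resulting family contains all mobiles counted by $R_\mathfrak{f}$, and its generating series is the finite product $z^\mathfrak{f}\,\Tplanar(z)^{n_1}\left(1-P_{m,D}(1,t(z))\right)^{-(n_0+n_1)}$, whose radius of convergence is visibly $\zc$; coefficient-wise domination then concludes, with no analysis of the roots $\alpha_i$ and no convergence questions about the sum over labellings. You instead keep the increment constraints and prove pointwise finiteness at every real $z<\zc$ (equivalent, by non-negativity of the coefficients), which forces you to establish (a) geometric decay of $M_n(t)$ for fixed $t<\tc$, and (b) that every level $j\in\llbracket 1,M\rrbracket$ is straddled by an edge of $E_2$, so that each $\delta_j$ is damped in the multi-geometric sum. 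Both points are correct; in particular for (b) the straddling edge produced by surjectivity of $\lambda$ and connectedness of $\mathfrak{s}$ has distinct endpoint levels, hence $r(e)=0$. Your route is longer, but observation (b) is not wasted: it is exactly the fact, used implicitly but never proved in the paper, that every denominator $1-\prod_e\alpha_{i_e}^{A_{e,j}}$ appearing in the geometric summations leading to Equation~\ref{eq:Rfalgebraic} involves at least one factor $\alpha_{i_e}$, without which those summations would diverge. Two minor glosses in (a): identifying the combinatorial quantities $M_n(t)$ with the Laurent coefficients of the rational function $S_{m,D}(\cdot,t)$ on the annulus $\alpha_1(t)<|X|<\alpha_1(t)^{-1}$ deserves a word of justification (absolute convergence of $\sum_n M_n(t)X^n$ on $|X|=1$ for $t<\tc$, together with the kernel equation, does it); alternatively, you can avoid analysis altogether by noting that a chain of increment $n$ contains at least $|n|/r$ cells, whence $M_n(t)\le P_{m,D}(1,t)^{\lceil |n|/r\rceil}\big/\left(1-P_{m,D}(1,t)\right)$, which is already the geometric decay you need.
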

\begin{proof}
Let us consider the family of all objects obtained by replacing each edge $e$ of
the scheme $\mathfrak{s}$ by a chain of type $\tau(e)$, without any constraint
on the increment of the chains. These objects are not all valid mobiles
(most of them are not) but clearly, this family contains all the mobiles
counted by the series $R_\mathfrak{f}(z)$. Now, is $\mathfrak{s}$ has $n_0$
edges of type $0$ and $n_1$ edges of type $\neq0$, the generating series of
these objects is: $$
z^\mathfrak{f} \left(\frac{1}{1-P_{m,D}(1,t(z))}\right)^{n_0}
\left(\frac{\Tplanar(z)}{1-P_{m,D}(1,t(z))}\right)^{n_1}
$$
so that:
$$R_\mathfrak{f}(z) \preccurlyeq
z^\mathfrak{f} \Tplanar(z)^{n_1}
\left(\frac{1}{1-P_{m,D}(1,t(z))}\right)^{n_0+n_1}
$$
where $\sum f_n z^n \preccurlyeq \sum g_n z^n$ means that $f_n \leq g_n$ for
all $n$. Since all the coefficients of these two series are nonnegative, this implies
that the radius of convergence of $R_{\vec\mathfrak{o}}$ is at least $\zc$
(recall that $P_{m,D}(1,\tc)=1$ and that $P_{m,D}$ has positive coefficients, so that
$\zc$ is indeed the radius of convergence of the right hand side).
\end{proof}

We now study the behaviour of $R_\mathfrak{f}(z)$ near $z=\zc$. Several things
happen that create a singularity: First, $\zc$ is the radius of convergence of
$\Tplanar$ and $t(z)$. Second, we saw that at $t=\tc$, at least $\alpha_1(t)$
ceases to be analytic: we are thus in a regime of \emph{composition of
singularities}. Third, at $t=\tc$, $\alpha_1(\tc)=1$ so that denominators in
Equation~\ref{eq:Rfalgebraic} can vanish. These three factors are easy to
control. There is a last one, however, that could happen. Indeed, if
$P_{m,D}(X,\tc)$ has other multiple roots than $1$, the corresponding series $C_i$
diverge. However, if ever this happens
%\footnote{
%It is conjectured in the thesis of Cyril Banderier that this never
%happens, even if one replaces $P_{m,D}$  by any polynomial with positive
%coefficients.} 
the corresponding divergences will cancel between multiple
roots, and \emph{everything works as if $1$ was the only multiple root}.
Precisely, we have:
\begin{proposition}
The only dominating term in
Expression~\ref{eq:Rfalgebraic} is the one
corresponding to $i_e=1$ for all $e$, and when $z$ tends to $\zc$ we have:
\begin{eqnarray}
R_\mathfrak{f}(z)
&=& c_{\mathfrak{s},\lambda} z^{\mathfrak{f}} \Tplanarc^{n_{\neq}}
\frac{C_1(t(z))^{|E(\mathfrak{s})|}} {\left[1-\alpha_1(t(z))\right]^{M}}
\left[1+o(1)\right]
\end{eqnarray}
where the constant
$\displaystyle c_{\mathfrak{s},\lambda}=
\frac{1}{\prod_{j=1}^M \sum_{e\in E} A_{e,j}}
$ depends only on $\mathfrak{s}$ and $\lambda$.
\end{proposition}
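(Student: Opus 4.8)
The plan is to track precisely how the singularity at $z=\zc$ is produced. As $z\to\zc$ we have $t(z)\to\tc$ and, by Lemma~\ref{lemma:devalpha1}, $\alpha_1(t(z))\to 1^-$, whereas by Lemma~\ref{lemma:roots}(ii) every other root stays in $|\alpha_i|\le 1-\epsilon$. Hence the \emph{only} quantities that can blow up in Expression~\ref{eq:Rfalgebraic} are those carried by the principal branch $(\alpha_1,C_1)$, and the whole statement amounts to counting powers of $(1-\alpha_1)$. Rather than argue term by term in the expanded sum of Expression~\ref{eq:Rfalgebraic} (where individual $C_i$ may diverge, see below), I would first rewrite $R_\mathfrak{f}$ in its pre--partial-fraction form, using $\sum_i C_i\alpha_i^{|n|}=M_n(t(z))$ and $H^{\tau}_n=\Tplanar H^0_n$:
\[
R_\mathfrak{f}=z^{\mathfrak{f}}\,\Tplanar^{\,n_{\neq}}\sum_{\delta\in(\mathbb{N}_{>0})^{M}}\ \prod_{e\in E_1}\bigl(M_0-1\bigr)\ \prod_{e\in E_2}M_{\Delta(e)} ,
\]
and then substitute $M_n(t)=C_1(t)\,\alpha_1(t)^{|n|}+\widetilde{M}_n(t)$, where $\widetilde{M}_n:=\sum_{i\ge 2}C_i\alpha_i^{|n|}$. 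The virtue of keeping the remainder $\widetilde{M}_n$ \emph{grouped} is that it stays regular even when several non-principal roots collide, which is exactly the feared source of divergence.

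Expanding the products, a term is indexed by the set $T\subseteq E_2$ of edges on which one selects the piece $C_1\alpha_1^{|\Delta(e)|}$ (the complement selecting the $\widetilde{M}$ piece); this is the same bookkeeping as choosing $i_e=1$ on $T$ in Expression~\ref{eq:Rfalgebraic}. Each factor $C_1\sim \mathrm{const}/(1-\alpha_1)$ diverges while each $\widetilde{M}$ factor stays bounded, and the residual sum $\sum_{\delta}\prod_{e\in T}\alpha_1^{|\Delta(e)|}\prod_{e\notin T}\widetilde{M}_{\Delta(e)}$ produces, after a geometric summation in each $\delta_j$ (using $\Delta(e)=a_{F,\mathfrak{a}}(e)+\sum_j A_{e,j}\delta_j$), a pole $(1-\alpha_1)^{-1}$ for each index $j$ such that some edge of $T$ spans the gap $j$, the exponentially small $\widetilde{M}$-factors rendering the sum convergent in the remaining directions. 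Thus the order in $(1-\alpha_1)$ of the $T$-term equals $|E_1|+|T|+\#\{j:\sum_{e\in T}A_{e,j}>0\}$. Removing an edge from $T$ lowers $|T|$ by one and cannot enlarge $\{j:\sum_{e\in T}A_{e,j}>0\}$, so the order is \emph{strictly} maximised at $T=E_2$; since every gap $j$ is spanned by at least one edge and all spanning edges lie in $E_2$ (edges of $E_1$ have $\Delta\equiv 0$), this maximal order is $|E_1|+|E_2|+M=|E(\mathfrak{s})|+M$. Reading off the leading coefficient then gives $C_1^{|E(\mathfrak{s})|}$ from the $C_1$ factors, $\prod_{j=1}^M\bigl((1-\alpha_1)\sum_e A_{e,j}\bigr)^{-1}$ from the geometric sums (using $1-\alpha_1^{\,s}\sim s(1-\alpha_1)$), and $\Tplanarc^{\,n_{\neq}}$, $z^{\mathfrak{f}}$ from the prefactors, which is exactly the claimed formula with $c_{\mathfrak{s},\lambda}=\bigl(\prod_{j=1}^M\sum_e A_{e,j}\bigr)^{-1}$; the polynomial $\mathfrak{p}$ and the truncation constant $K$ contribute only to lower-order terms.

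The main obstacle is the input used above, namely that $\widetilde{M}_n(t)$ is analytic in a neighbourhood of $\tc$ and satisfies $|\widetilde{M}_n(t)|\le A\rho^{|n|}$ for some $\rho<1$ uniformly near $\tc$ --- this is precisely where the divergences of the individual $C_i$ at multiple roots have to cancel. I would establish it by a contour argument: for $n\ge 0$, $\widetilde{M}_n(t)=[X^n]\Phi(X,t)$ with
\[
\Phi(X,t):=\frac{1}{1-P_{m,D}(X,t)}-\frac{C_1\alpha_1}{X-\alpha_1}-\frac{C_1}{1-X\alpha_1},
\]
the subtraction removing exactly the poles at $X=\alpha_1$ and $X=\alpha_1^{-1}$, which are the only poles of $(1-P_{m,D})^{-1}$ that approach the unit circle as $t\to\tc$ (they pinch at $X=1$, matching the double root of $1-P_{m,D}(\cdot,\tc)$ guaranteed by Lemma~\ref{lemma:partialdiff}). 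By Lemma~\ref{lemma:roots}(ii) all remaining poles $\alpha_i,\alpha_i^{-1}$ ($i\ge 2$) stay in $\{|X|\le 1-\epsilon\}\cup\{|X|\ge(1-\epsilon)^{-1}\}$, so $\Phi$ is jointly analytic on a fixed closed annulus around $|X|=1$ for all $t$ near $\tc$; a Cauchy estimate on a circle of radius $(1-\epsilon/2)^{-1}$ yields the geometric decay in $n$, and analyticity in $t$ follows from Morera's theorem, since no pole crosses the contour. The one genuinely delicate check is that $\Phi$ remains \emph{uniformly} bounded near $X=1$ as $\alpha_1\to 1$, i.e. that the principal branch reproduces the full double-pole singularity at $X=1$; once this is verified, the termwise blow-ups of the $C_i$ never enter the analysis and everything reduces to the power counting in $(1-\alpha_1)$ carried out above.
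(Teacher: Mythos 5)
Your argument is correct in substance, but it takes a genuinely different route from the paper's. The paper stays inside the partial-fraction form (Expression~\ref{eq:Rfalgebraic}): it partitions the indices $i_e$ according to the classes $I_q$ of roots that collide at $t=\tc$, neutralises the possible blow-up of the individual $C_i$, $i\geq 2$, by the grouped cancellation $\sum_{i\in I_q}C_i(t)=O(1)$ (Equation~\ref{eq:conjugateroots}, justified by a Newton--Puiseux expansion), and then identifies $w=(1,\dots,1)$ as the unique dominant class-vector, counting the vanishing denominators essentially as you do. You instead back up to the pre-partial-fraction form, isolate only the principal branch by writing $M_n=C_1\alpha_1^{|n|}+\widetilde M_n$, establish a uniform bound $|\widetilde M_n(t)|\leq A\rho^{|n|}$ by a contour estimate on the subtracted kernel $\Phi$, and then run a transparent power count in $(1-\alpha_1)$ over subsets $T\subseteq E_2$. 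Your route buys robustness and uniformity: no case analysis on root multiplicities, and an exponential-in-$n$ control of the remainder that makes the interchange of the $\delta$-sums with the asymptotics painless. The paper's route keeps everything at the level of Puiseux expansions of algebraic branches, with no contour integration.

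Two caveats. First, the step you flag as ``the one genuinely delicate check'' is the crux, and it is larger than your phrasing suggests: on the contour $|X|=(1-\epsilon/2)^{-1}$, each of the two subtracted terms blows up separately (their coefficients are $\asymp C_1\to\infty$), and only their \emph{sum} stays bounded, because the $(\alpha_1-\alpha_1^{-1})^{-1}$ factors cancel between the two principal parts; this is precisely the paper's cancellation phenomenon of Equation~\ref{eq:conjugateroots}, reappearing for the pair $\{\alpha_1,\alpha_1^{-1}\}$. The claim is true and can be closed cleanly: factor $X^r(1-P_{m,D}(X,t))=q(X,t)\,h(X,t)$, where $q$ is the monic quadratic vanishing at $\alpha_1,\alpha_1^{-1}$ (by Weierstrass preparation its coefficients, being symmetric in the colliding pair, are analytic in $t$ near $\tc$); then $\Phi$ is the second divided difference of $g=X^r/h$ at the three points $\alpha_1,\alpha_1^{-1},X$, and since $g$ is analytic and uniformly bounded on the annulus $1-\epsilon\leq|X|\leq(1-\epsilon)^{-1}$, a two-case estimate ($X$ near $1$: Cauchy's formula on a small disk around $1$; $X$ away from $1$: bounding the difference quotients of $g$ on that disk) gives the uniform bound. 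The same symmetry under $\alpha_1\leftrightarrow\alpha_1^{-1}$ is what rescues your Morera step, since $\alpha_1(t)$ and $C_1(t)$ individually have a square-root branch point at $\tc$ (Lemma~\ref{lemma:devalpha1}), so ``no pole crosses the contour'' alone does not give analyticity of $\Phi$ in $t$. Second, a minor slip in the bookkeeping: a gap $j$ spanned by an edge of $T$ \emph{and} by an edge of $E_2\setminus T$ contributes $O(1)$, not a pole, because the exponentially small $\widetilde M$ factor kills the geometric divergence in $\delta_j$; the order of the $T$-term is therefore $|E_1|+|T|+\#\{j:\ \mbox{no edge of }E_2\setminus T\mbox{ spans }j\}$, of which your count is only an upper bound. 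This is harmless: the upper bound still forces strict domination by $T=E_2$, where your count is exact and yields $c_{\mathfrak{s},\lambda}=\bigl(\prod_{j=1}^M\sum_{e}A_{e,j}\bigr)^{-1}$ correctly.
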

\begin{proof}
First, Lemma~\ref{lemma:devalpha1} and the definition of $C_1$ ensures that
when $t$ tends to $\tc$: 
$$
C_1(t)=\Theta\left((\tc-t)^{-1/2}\right).
$$
Moreover, from the definition of $C_i$, and from Lemma~\ref{lemma:roots}, if
$\alpha_i(\tc)$ is a root of order $L$ of $P_{m,D}(X,\tc)$, one has:
$$
C_i(t)=\Theta\left((\tc-t)^{-\frac{L-1}{L}}\right).
$$
which dominates $C_1(t)$ if $L\geq 3$.
We now define the equivalence relation on $\llbracket 1, r \rrbracket$
$i\approx j$ if $\alpha_i(\tc)=\alpha_j(\tc)$, and we consider the
corresponding partition in classes:
$\llbracket 1, r \rrbracket = \cup_{q=1}^l I_q$, where $l$ is the number of
classes. Observe that $1$ is alone in its class, and we assume that $I_1=\{1\}$.
It is easily seen (for example with a Newton-Puiseux
expansion of the $\alpha_i$'s) that for each $q\neq 1$, one has when $t$ tends
to $\tc$: 
\begin{eqnarray}\label{eq:conjugateroots}
\sum_{i\in I_q}C_i(t) = O(1).
\end{eqnarray}
We now partition $\llbracket1,r\rrbracket^{E_2}$ according to which indices
are in which class $I_q$. We have:
\begin{eqnarray}
&&
\sum_{i\in \llbracket1,r\rrbracket^{E_2}} \prod_{e\in E_2}
C_{i_e} \left(\mathfrak{p}(\alpha_1,\ldots,\alpha_r)	+
\prod_{e\in E_2} \alpha_{i_e}^{a_{F,\mathfrak{a}}(e)}
\prod_{j=1}^M
\frac{\left(\prod_e \alpha_{i_e}^{A_{e,j}}\right)^K}{1-\prod_e
\alpha_{i_e}^{A_{e,j}}}\right) \nonumber \\
&=& \sum_{w\in\llbracket1,l\rrbracket^{E_2}}
\sum_{i_1\in I_{w_1}}\ldots \sum_{i_{k}\in I_{w_{k}}}
\prod_{e\in E_2}
C_{i_e} \left(\mathfrak{p}(\alpha_1,\ldots,\alpha_r)	+
\prod_{e\in E_2} \alpha_{i_e}^{a_{F,\mathfrak{a}}(e)}
\prod_{j=1}^M
\frac{\left(\prod_e \alpha_{i_e}^{A_{e,j}}\right)^K}{1-\prod_e
\alpha_{i_e}^{A_{e,j}}}\right) \ \ \ \ \  \label{eq:interm2}
\end{eqnarray}
For each $w\in\llbracket1,l\rrbracket^{E_2}$, we note 
$\mathfrak{p}_w$ the value at $t=\tc$ of the polynomial
$\mathfrak{p}(\alpha_1,\ldots.\alpha_r)$, and we let 
$k_w=\#\{j,\  \forall e\in E_2,\ A_{e,j}=0 \mbox{ or }i_e=1\}$ 
be the number of $j$'s for which the denominator vanishes, so that we
have: $$
\prod_{e\in E_2} \alpha_{i_e}^{a_{F,\mathfrak{a}}(e)}
\prod_{j=1}^M
\frac{\left(\prod_e \alpha_{i_e}^{A_{e,j}}\right)^K}{1-\prod_e
\alpha_{i_e}^{A_{e,j}}}
= \frac{\aleph_w}{(1-\alpha_1(t))^{k_w}} [1+o(1)]
$$ for some quantity $\aleph_w$ that depends only on $w$.
Then the quantity~\ref{eq:interm2} rewrites:
\begin{eqnarray*}
&&  [1+o(1)]
\sum_{w\in\llbracket1,l\rrbracket^{E_2}}
\left(\mathfrak{p}_w + \frac{\aleph_w}{(1-\alpha_1(t))^{k_w}} \right)
\sum_{\i_1\in I_{w_1}}\ldots \sum_{\i_{k}\in I_{w_{k}}}
\prod_{e\in E_2}
C_{i_e} \\
&=& [1+o(1)]
\sum_{w\in\llbracket1,l\rrbracket^{E_2}}
\left(\mathfrak{p}_w + \frac{\aleph_w}{(1-\alpha_1(t))^{k_w}} \right)
\prod_{e\in E_2} \left(\sum_{i\in I_{w_e}}C_{i}\right)
\end{eqnarray*}
Now, from what we said at the beginning of the proof, $\sum_{i\in
I_{w_e}}C_{i}$ is a $O(1)$ if $w_e\neq 1$, and a $\Theta((\tc-t)^{-1/2})$ if
$w_e=1$. Hence the only dominating term in the last equation is
$w=(1,\ldots,1)$, and this gives finally, returning to
Equation~\ref{eq:Rfalgebraic}: $$
R_\mathfrak{f}(z)=[1+o(1)]
 z^\mathfrak{f} \Tplanarc^{n_\neq} C_1(t(z))^{|E_1|+|E_2|} 
 \prod_{j=1}^M \frac{1}{1-\alpha_1(t(z))^{\sum_{e}A_{e,j}}}
$$
which gives the statement of the proposition.
\end{proof}
Observe that from Equation~\ref{eq:conjugateroots} and~\ref{eq:parfrac}:
$$
S_{m,D}(1,t) = \sum_{q=1}^l \sum_{i\in I_q} \frac{(1+\alpha_i)C_i}{1-\alpha_i}
=\frac{2C_1(t)}{1-\alpha_1}[1+o(1)]
$$
Since Lemma~\ref{lemma:devalpha1} gives the singular expansion of
$\alpha_1(t)$, and since the expansion of
$S_{m,D}(1,t)=\frac{1}{1-P_{m,D}(1,t)}$ follows from Lemma~\ref{lemma:partialdiff}, we obtain:
\begin{lemma}\label{lemma:devC1}
When $t$ tends to $\tc$, the following Puiseux expansion holds:
\begin{eqnarray}
\label{eq:devC1}
C_1(t)= \sqrt{\frac{3(m-1)}{m}}\gamma_{m,D}^{-1}
\left(1-\frac{t}{\tc}\right)^{-1/2} +
o\left(\left(1-\frac{t}{\tc}\right)^{-1/2}\right)
\end{eqnarray}
\end{lemma}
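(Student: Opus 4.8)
The plan is to read the displayed identity
$$S_{m,D}(1,t)=\frac{2C_1(t)}{1-\alpha_1}\,[1+o(1)]$$
that precedes the statement as a relation from which $C_1(t)$ can be isolated, and then feed in two already-established singular expansions. Solving for the unknown gives $C_1(t)=\tfrac12\,S_{m,D}(1,t)\,(1-\alpha_1)\,[1+o(1)]$, so it suffices to know the leading singular behaviour of $S_{m,D}(1,t)=\frac{1}{1-P_{m,D}(1,t)}$ and of $1-\alpha_1(t)$ as $t\to\tc$. Both of these are supplied by earlier results.

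First I would expand the denominator at $t=\tc$. Since $P_{m,D}(1,\tc)=1$ and the first alinea of Lemma~\ref{lemma:partialdiff} gives $\tc\,\frac{\partial P_{m,D}}{\partial t}(1,\tc)=\frac{\gamma_{m,D}}{m-1}$, Taylor's formula yields
$$1-P_{m,D}(1,t)=\frac{\gamma_{m,D}}{m-1}\Bigl(1-\frac{t}{\tc}\Bigr)+O\!\Bigl(\bigl(1-\tfrac{t}{\tc}\bigr)^2\Bigr),$$
hence
$$S_{m,D}(1,t)=\frac{m-1}{\gamma_{m,D}}\Bigl(1-\frac{t}{\tc}\Bigr)^{-1}\bigl[1+o(1)\bigr].$$
Next, Lemma~\ref{lemma:devalpha1} provides $1-\alpha_1(t)=\sqrt{\frac{12}{m(m-1)}}\,\bigl(1-\frac{t}{\tc}\bigr)^{1/2}+o\bigl((1-\frac{t}{\tc})^{1/2}\bigr)$. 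Substituting both expansions into the isolated expression for $C_1$ gives
$$C_1(t)=\frac{1}{2}\,\frac{m-1}{\gamma_{m,D}}\sqrt{\frac{12}{m(m-1)}}\,\Bigl(1-\frac{t}{\tc}\Bigr)^{-1/2}\bigl[1+o(1)\bigr],$$
after which it remains only to simplify the numerical constant, $\tfrac12\,\frac{m-1}{\gamma_{m,D}}\sqrt{\tfrac{12}{m(m-1)}}=\gamma_{m,D}^{-1}\sqrt{\tfrac{3(m-1)}{m}}$, which is exactly the claimed coefficient.

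I do not anticipate any genuine obstacle: every analytic ingredient has been isolated beforehand, in the two cited lemmas and in the displayed identity, so the argument is essentially bookkeeping plus one constant computation. The only point deserving a line of care is converting the multiplicative error $[1+o(1)]$ carried along from the displayed identity into the additive error $o\bigl((1-\frac{t}{\tc})^{-1/2}\bigr)$ of the conclusion; this is legitimate precisely because the leading term is itself of order $(1-\frac{t}{\tc})^{-1/2}$, so a multiplicative $[1+o(1)]$ is absorbed into an additive $o$ of the same order.
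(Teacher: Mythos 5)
Your proposal is correct and follows essentially the same route as the paper: the paper derives the identity $S_{m,D}(1,t)=\frac{2C_1(t)}{1-\alpha_1}[1+o(1)]$ and then, exactly as you do, combines the expansion of $S_{m,D}(1,t)=\frac{1}{1-P_{m,D}(1,t)}$ coming from the first alinea of Lemma~\ref{lemma:partialdiff} with the expansion of $1-\alpha_1(t)$ from Lemma~\ref{lemma:devalpha1}. Your constant simplification $\tfrac12\,\frac{m-1}{\gamma_{m,D}}\sqrt{\tfrac{12}{m(m-1)}}=\gamma_{m,D}^{-1}\sqrt{\tfrac{3(m-1)}{m}}$ is also correct, so nothing is missing.
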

Setting $t=t(z)$, the last proposition and
Lemmas~\ref{lemma:devalpha1},\ref{lemma:devC1}, \ref{lemma:devR} finally give:
\begin{lemma}\label{lemma:devRf}
When $z$ tends to $\zc$, the following Puiseux expansion holds:
\begin{eqnarray}\label{eq:devRfz}
R_\mathfrak{f}(z)= 
c_{\mathfrak{s},\lambda}(\zc)^\mathfrak{f} (\Tplanarc)^{n_{\neq}}
(m-1)^{\frac{k+m}{4}}m^{\frac{M-k}{2}}
\gamma_{m,D}^{\frac{M-3k}{4}}
\beta_{m,D}^{-\frac{k+M}{4}}
3^{\frac{k-M}{2}}
2^{\frac{-k-5M}{4}}
\left(1-\frac{z}{\zc}\right)^{-\frac{k+M}{4}}
[1+o(1)]
\end{eqnarray}
where $k$ is the number of edges of $\mathfrak{s}$.
\end{lemma}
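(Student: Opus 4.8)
The plan is to take the expression for $R_\mathfrak{f}(z)$ furnished by the preceding proposition, namely
\[
R_\mathfrak{f}(z)= c_{\mathfrak{s},\lambda}\, z^{\mathfrak{f}}\,\Tplanarc^{n_{\neq}}\,\frac{C_1(t(z))^{k}}{\left[1-\alpha_1(t(z))\right]^{M}}\,[1+o(1)],
\]
and simply feed into it the three Puiseux expansions already at our disposal, where $k=|E(\mathfrak{s})|$. Since $\zc>0$, the prefactor $z^{\mathfrak{f}}$ contributes only the constant $(\zc)^{\mathfrak{f}}$ up to a $[1+o(1)]$ factor, so the whole problem reduces to expanding $C_1(t(z))$ and $1-\alpha_1(t(z))$ as $z\to\zc$. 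Lemmas~\ref{lemma:devC1} and~\ref{lemma:devalpha1} give these expansions in the variable $1-t/\tc$; the one missing ingredient is therefore the behaviour of $1-t(z)/\tc$ as a function of $1-z/\zc$.

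Computing that behaviour is the heart of the argument and the step I expect to require the most care, since it is exactly the \emph{composition of singularities} alluded to in the text. Here I would use $t(z)=z\Tplanar(z)^{m-1}$ together with the identity $\tc=\zc\Tplanarc^{m-1}$ established after Equation~\ref{eq:Rc}, which gives $t(z)/\tc=(z/\zc)(\Tplanar(z)/\Tplanarc)^{m-1}$. Writing $u=1-z/\zc$ and $v=1-\Tplanar(z)/\Tplanarc$, both tending to $0$, one has
\[
1-\frac{t(z)}{\tc}=1-(1-u)(1-v)^{m-1}=(m-1)v+u+O(v^2).
\]
The point to check is that Lemma~\ref{lemma:devR} gives $v=\Theta(\sqrt u)$, so the term $(m-1)v$ dominates while $u$ and $O(v^2)$ are both $O(u)=o(\sqrt u)$. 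Substituting the leading term of Lemma~\ref{lemma:devR} and simplifying the coefficient via $(m-1)\sqrt{2\beta_{m,D}/((m-1)\gamma_{m,D})}=\sqrt{2(m-1)\beta_{m,D}/\gamma_{m,D}}$ yields
\[
1-\frac{t(z)}{\tc}=\sqrt{\frac{2(m-1)\beta_{m,D}}{\gamma_{m,D}}}\,\sqrt{1-\frac{z}{\zc}}+O\!\left(1-\frac{z}{\zc}\right).
\]

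With this in hand the remainder is bookkeeping. Raising the previous display to the power $\pm 1/2$ and inserting it into Lemmas~\ref{lemma:devalpha1} and~\ref{lemma:devC1} turns $1-\alpha_1(t(z))$ into a $\Theta((1-z/\zc)^{1/4})$ and $C_1(t(z))$ into a $\Theta((1-z/\zc)^{-1/4})$, each with an explicit constant. Plugging these into the proposition, the exponent of $1-z/\zc$ becomes $k\cdot(-\tfrac14)+M\cdot(-\tfrac14)=-\tfrac{k+M}{4}$, matching the claim. The only genuinely tedious part is then collecting the constant: one multiplies the $k$-th power of the $C_1$-constant $\sqrt{3(m-1)/m}\,\gamma_{m,D}^{-1}$ and the $(-M)$-th power of the $(1-\alpha_1)$-constant $\sqrt{12/(m(m-1))}$, together with the appropriate powers of $\sqrt{2(m-1)\beta_{m,D}/\gamma_{m,D}}$ coming from the intermediate display, and sorts the result by the base symbols $2,3,m,(m-1),\beta_{m,D},\gamma_{m,D}$. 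I would carry out this sorting carefully, as the main risk is an arithmetic slip in the fractional exponents; doing so reproduces exactly the stated product (reading the exponent of $(m-1)$ as $\tfrac{k+M}{4}$), which completes the proof.
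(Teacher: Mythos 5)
Your proposal is correct and is essentially the paper's own argument: the paper obtains this lemma precisely by composing the preceding proposition with Lemmas~\ref{lemma:devalpha1}, \ref{lemma:devC1} and \ref{lemma:devR}, the only real content being the composition-of-singularities step $1-t(z)/\tc=\sqrt{2(m-1)\beta_{m,D}/\gamma_{m,D}}\,\sqrt{1-z/\zc}+O(1-z/\zc)$, which you carry out exactly as needed, and your constant bookkeeping checks out. You are also right that the exponent $\frac{k+m}{4}$ of $(m-1)$ in the statement is a typo for $\frac{k+M}{4}$.
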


\subsection{The dominant pairs.}
From the last lemma, the singular behaviour of the sum~\ref{eq:Rgsum} is
dominated by the full schemes $\mathfrak{f}$ for which the quantity $k+M$ is
maximal. First, to maximize the quantity $k+M$, we can assume that $\lambda$ is
injective, i.e. that $M=|V(\mathfrak{s})|-1$, so that the dominant terms will be
given by schemes such that the quantity $|E(\mathfrak{s})|+|V(\mathfrak{s})|-1$
is maximal. Now, if a scheme $\mathfrak{s}$ of genus $g$ has $n_i$ vertices of
degree $i$ for all $i\geq 3$ we have:
$$
|E(\mathfrak{s})|+|V(\mathfrak{s})|= 
\sum_{i\geq3}\frac{i+2}{2} n_i.
$$
Maximizing this quantity with the constraint of 
Equation~\ref{eq:Eulerscheme} imposes that $\sum_i n_i$ is maximal, and since
$\sum (i-2)n_i$ is fixed, this is realized if and only if
$n_3\neq0$ and $n_i=0$ for $i\neq 3$, i.e. if $\mathfrak{s}$ has only vertices
of degree $3$. From Euler characteristic formula, such a scheme has $6g-3$
edges and $4g-2$ vertices. This leads to:
\begin{definition}
A \emph{dominant pair} of genus $g$ is a pair $(\mathfrak{s},\lambda)$, where
$\mathfrak{s}$ is a rooted scheme of genus $g$ with $6g-3$ edges and $4g-2$
vertices of degree $3$, and $\lambda$ is bijection:
$V(\mathfrak{s})\rightarrow \llbracket 0, 4g-3 \rrbracket$. \\
The set of all dominant pairs of genus $g$ is denoted $\mathcal{P}_g$.
\end{definition}
Hence, only dominant pairs appear at the first order in the sum~\ref{eq:Rgsum}.

\section{The multiplicative contribution of the nodal stars.}
Observe that Equation~\ref{eq:devRfz} has a remarquable multiplicative form:
the contribution of the pair $(\mathfrak{s},\lambda)$ is clearly separated from
the one of  $(\tau,F,\mathfrak{a})$.
In this section, we will perform a
summation on $(F,\mathfrak{a})$. Since we are only interested in the
asymptotics, we consider only the case of dominant pairs.

\subsection{Four types of nodes}
We fix a triple $(\mathfrak{s},\lambda,\tau)$ such that
$(\mathfrak{s},\tau)$ is a typed scheme and
$(\mathfrak{s},\lambda)\in\mathcal{P}_g$. 

We say that an edge $e\in E(\mathfrak{s})$ is \emph{special} if $\tau(e)\neq0$.
Let $v\in V(\mathfrak{s})$ be a vertex of $\mathfrak{s}$ adjacent to $l$
special edges, and let $\tau_1,..\tau_l$ be their types. We let
$\tilde\tau_i=\tau_i$ if the corresponding edge is incoming at $v$, and $\tilde\tau_i=m-\tau_i$ if it is
outgoing. Hence, from the discussion of subsection~\ref{subsec:Kirchoff},
in any full scheme of the form $(\mathfrak{s},\tau,F,\mathfrak{a},\lambda)$, 
$\tilde\tau_i$ is the type of the corresponding split-edge of $F_v$ if $F_v$ is
a white elementary star ; if $F_v$ is a black elementary star, the corresponding type will be $m-\tilde\tau_i$.
 We have:
\begin{lemma}
The vertices of $\mathfrak{s}$ can be  of four types:
\begin{itemize}
\item[\bf 1.] 
 vertices such that none of the  three adjacent edges are special.
\item[\bf 2.]
 vertices such that exactly two adjacent edges are specials. In this case,
 one has: $\tilde\tau_1 + \tilde\tau_2 = m$
\item[\bf 3.1.]
 vertices such that exactly three edges are specials, and such such that:
  $\tilde\tau_1 + \tilde\tau_2 +\tilde\tau_3 = m$.
\item[\bf 3.2.]
 vertices such that exactly three edges are specials, and such that:
  $\tilde\tau_1 + \tilde\tau_2 +\tilde\tau_3 = 2m$
\end{itemize}
\end{lemma}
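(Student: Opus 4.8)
The plan is to reduce everything to the Kirchhoff congruence of subsection~\ref{subsec:Kirchoff} together with the trivalence of the scheme, after which only a short finite case analysis remains. First I would record the two elementary facts that drive the argument. On the one hand, since a typing takes values in $\llbracket 0,m-1\rrbracket$, a \emph{special} edge has $\tau(e)\in\llbracket 1,m-1\rrbracket$, so in \emph{both} orientations its contribution satisfies $\tilde\tau\in\llbracket 1,m-1\rrbracket$: if $e$ is outgoing then $\tilde\tau=m-\tau(e)$, and $m-\tau(e)\in\llbracket 1,m-1\rrbracket$ as well. On the other hand, the Kirchhoff law (Proposition~\ref{prop:Kirchoff}) says precisely that, with the uniform convention used here, the sum $\sum_i\tilde\tau_i$ over the special edges at $v$ is $\equiv 0\pmod m$: replacing each outgoing contribution $m-\tau(e)$ by $-\tau(e)$ changes $\sum_i\tilde\tau_i$ only by a multiple of $m$ and turns it into the left-hand side of Equation~\ref{eq:kirchoff} up to sign. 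Finally, since $(\mathfrak{s},\lambda)\in\mathcal{P}_g$ is a dominant pair, $v$ is trivalent, hence carries exactly three edge-ends, so the number $l$ of special ones lies in $\{0,1,2,3\}$.

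The rest is a finite case analysis whose only real content is the exclusion of $l=1$. If $l=1$, the congruence forces $\tilde\tau_1\equiv 0\pmod m$, impossible since $\tilde\tau_1\in\llbracket 1,m-1\rrbracket$; hence a trivalent vertex is never adjacent to exactly one special edge, and $l\in\{0,2,3\}$. The case $l=0$ is type~1. For $l=2$ we have $\tilde\tau_1+\tilde\tau_2\in\llbracket 2,2m-2\rrbracket$, whose unique multiple of $m$ is $m$ itself, giving $\tilde\tau_1+\tilde\tau_2=m$ (type~2). For $l=3$ we have $\tilde\tau_1+\tilde\tau_2+\tilde\tau_3\in\llbracket 3,3m-3\rrbracket$, in which the only multiples of $m$ are $m$ and $2m$, yielding subcases $3.1$ and $3.2$ (for $m=2$ this interval contains no multiple of $m$, so $l=3$ simply does not arise, consistently with the classification). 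This exhausts all possibilities and proves the lemma.

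Since the two facts above make the argument essentially immediate, I do not expect a genuine obstacle; the only point requiring care is bookkeeping at loops. A vertex carrying a loop is still trivalent but incident to only two distinct edges, so I would phrase the whole discussion in terms of the three \emph{edge-ends} (half-edges) at $v$ rather than edges: a special loop then contributes two special ends with $\tilde\tau$-values $\tau$ and $m-\tau$ summing to $m$, landing such a vertex in type~2. With this convention the counts $l\in\{0,1,2,3\}$ and the congruence are unambiguous, and the case analysis above goes through verbatim.
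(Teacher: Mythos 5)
Your proof is correct and takes exactly the paper's route: the paper's own proof is the one-line remark that the lemma follows from the Kirchhoff law (Proposition~\ref{prop:Kirchoff}) and the fact that each $\tilde\tau_i\in\llbracket 1,m-1\rrbracket$, which, combined with trivalence of the vertices of a dominant scheme, is precisely the finite case analysis you spell out (including the exclusion of exactly one special edge and of $l=3$ when $m=2$). Your additional bookkeeping with edge-ends at loops is a sound clarification that the paper leaves implicit.
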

\begin{proof}
The lemma is a straightforward consequence of the Kirchoff law
(Proposition~\ref{prop:Kirchoff}), and the fact that the $\tilde\tau_i$'s  are
elements of $\llbracket 1, m-1 \rrbracket$.
\end{proof}
Observe that, in a full scheme, vertices of type {\bf 3.2} can correspond
either to black or white elementary stars, whereas all the other correspond to white
elementary stars only.
We denote by $v_1$ (resp. $v_2$,
$v_3^{(1)}$, $v_3^{(2)}$) the number of vertices of type {\bf 1} (resp. {\bf 2}, {\bf 3.1}, {\bf 3.2}).
Then we have:
\begin{lemma}
$$
v_3^{(1)}=v_3^{(2)}
$$
\end{lemma}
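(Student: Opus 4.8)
The plan is to prove the identity by a weighted double-counting argument in the spirit of the hand-shaking lemma, balancing an edge-count against a vertex-count so that only the type~$\textbf{3}$ vertices survive. Write $s$ for the number of special edges of $\mathfrak{s}$, and for each vertex $v$ let $d(v)$ be the number of special edges incident to $v$ and let $S_v=\sum_i\tilde\tau_i$ be the sum of the $\tilde\tau_i$ over those edges (the quantity whose value is recorded in the four-types lemma). First I would obtain two global evaluations by summing over edges. Since the canonical orientation of $\mathfrak{s}$ makes every special edge $e$ incoming at one of its endpoints and outgoing at the other, its contribution to $S_\bullet$ is $\tau(e)$ at the incoming end and $m-\tau(e)$ at the outgoing end, and these add up to exactly $m$ regardless of the value of $\tau(e)$. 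Summing over all special edges therefore gives $\sum_{v\in V(\mathfrak{s})}S_v=ms$, while counting incidences the same way gives $\sum_{v\in V(\mathfrak{s})}d(v)=2s$.

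Next I would read off the local contributions from the four-types lemma. A vertex of type~$\textbf{1}$ has $d(v)=0$ and $S_v=0$; a vertex of type~$\textbf{2}$ has $d(v)=2$ and $S_v=m$; a vertex of type~$\textbf{3.1}$ has $d(v)=3$ and $S_v=m$; and a vertex of type~$\textbf{3.2}$ has $d(v)=3$ and $S_v=2m$. The key observation is that the combination $S_v-\tfrac{m}{2}d(v)$ vanishes on types~$\textbf{1}$ and~$\textbf{2}$, equals $-\tfrac{m}{2}$ on type~$\textbf{3.1}$, and equals $+\tfrac{m}{2}$ on type~$\textbf{3.2}$. Summing this combination over all vertices of $\mathfrak{s}$ thus yields $\tfrac{m}{2}\bigl(v_3^{(2)}-v_3^{(1)}\bigr)$.

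Finally I would equate this with its global value: by the two edge-counts above, $\sum_{v}\bigl(S_v-\tfrac{m}{2}d(v)\bigr)=ms-\tfrac{m}{2}\cdot 2s=0$. Hence $\tfrac{m}{2}\bigl(v_3^{(2)}-v_3^{(1)}\bigr)=0$, and since $m\geq 2$ we conclude $v_3^{(1)}=v_3^{(2)}$. I do not expect a genuine obstacle here, as the argument is purely a weighted handshake; the only point requiring care is the orientation bookkeeping in the edge-count, namely checking that the two endpoint contributions of each special edge always sum to $m$ because $\tilde\tau$ equals $\tau(e)$ at the incoming end and $m-\tau(e)$ at the outgoing end, so that $\sum_v S_v=ms$ holds independently of the individual vertex types.
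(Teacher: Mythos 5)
Your proof is correct and is essentially the paper's own argument: both rest on the same two global counts (special half-edge incidences summing to $2n_{\neq}$, and the sum of the $\tilde\tau$ values summing to $m\,n_{\neq}$ since the two ends of each special edge contribute $\tau(e)$ and $m-\tau(e)$), combined with the four-types classification. The paper eliminates $n_{\neq}$ between the two equations, which is exactly your weighted combination $S_v-\tfrac{m}{2}d(v)$ in a different guise.
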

\begin{proof}
Recall that $n_{\neq}$ is the number of edges of type $\neq 0$. 
Counting half-edges implies: $$
2 n_{\neq} = 3 v_3^{(1)}+ 3 v_3^{(2)} + 2 v_2
$$
Now, we compute the total sum, over all edges of type $\neq 0$,
of the quantity $\tau+ (m-\tau)$. It is of course equal to
$mn_{\neq}$, but also to the total sum of the types of the special half-edges leaving all the vertices,
i.e.: $$
m v_3^{(1)} + 2 m v_3^{(2)} + m v_2.
$$
So we have :
$$
\left\{\begin{array}{rcl}
2 n_{\neq} &=& 3 v_3^{(1)}+ 3 v_3^{(2)} + 2 v_2 \\ 
m  n_{\neq} &=& m v_3^{(1)} + 2m v_3^{(2)} + mv_2
\end{array}
\right.
$$
and eliminating $n_{\neq}$ implies the lemma.
\end{proof}

%\subsection{the total contribution}

We let $D_{\mathfrak{s},\lambda,\tau}$ be the set of all pairs
$(F,\mathfrak{a})$ such that
$(\mathfrak{s},\tau,F,\mathfrak{a},\lambda)\in\mathcal{F}_g$. We say that such
a pair is a \emph{decoration} of $\mathfrak{s},\tau,\lambda$. We let $$
R_{\mathfrak{s},\tau,\lambda}(z) = 
\sum_{(F,\mathfrak{a})\in D_{\mathfrak{s},\lambda,\tau}}
R_{\mathfrak{s},\tau,F,\mathfrak{a},\lambda}(z).
$$
Due to the nature of Equation~\ref{eq:devRfz}, we need to compute the sum:
\begin{eqnarray}\label{eq:totalfactor}
\sum_{(F,\mathfrak{a})\in D_{\mathfrak{s},\lambda,\tau}}
\zc^{(\mathfrak{s},\tau,F,\mathfrak{a},\lambda)}.
\end{eqnarray}
Each vertex of $\mathfrak{s}$ will contribute a certain multiplicative
factor to this quantity. 
%Precisely, for each $v$ in $V(\mathfrak{s})$ and 
%$(F,\mathfrak{a})\in D_{\mathfrak{s},\lambda,\tau}$, we let 

\subsubsection{vertices of type 1.}
A vertex $v$ of type one is ajacent to three edges of type $0$. Hence the star
$F_v$ can be either a single vertex $\circ$, either an elementary white star
with three distinguished labelled vertices. The corresponding multiplicative
factor is therefore:
$$
1+\sum_{k\in D} \frac{[(m-1)k][(m-1)k-1]}{2}{mk-1 \choose k} \tc^{(m-1)k} =
\frac{\gamma_{m,D}}{2}.$$
Moreover, in this case, the half-edges ajacent to $e$ are all of type $0$,
so they do not carry any correcting star of $\mathfrak{a}$.

\subsubsection{vertices of type 2.}
First, a vertex of type two cannot be decorated by a black star, since it is
linked to an edge of type $0$. Then, a vertex of type $2$ corresponds to a white
elementary star with exactly two special edges, which is rooted at a
labelled vertex. There are $\frac{k[(m-1)k-1]}{2}{mk-1 \choose k}$ of those.
Moreover, each time a special half-edge is outgoing at $v$, we need to add a
correction black star in $\mathfrak{a}$ for the corresponding superchain to
begin with a white star. Observe that that the number of black stars with two
marked special edges is $(m-1)$, so that each black star added in
$\mathfrak{a}$ contributes a factor $(m-1)\zc$ at the critical point.
Hence the multiplicative contribution of a vertex of type {\bf 2} is: \begin{eqnarray*}&&
[\zc(m-1)]^{out(v)} 
\sum_{k\in D}\frac{k[(m-1)k-1]}{2}{mk-1 \choose k}
\zc^{k}\Tplanarc^{(m-1)k-1}\\ &=& 
\frac{[\zc(m-1)]^{out(v)}}{\Tplanarc} \frac{\gamma_{m,D}}{2}
\end{eqnarray*}
where we noted $out(v)$ the number of outgoing special half-edges at $v$.

\subsubsection{vertices of type 3.1}
Such a vertex can correspond only to a white star. In the Motzkin walk
reformulation, this star is a walk of length $mk\in mD$, with $(m-1)k-2$
steps $-1$, $k-1$ steps $m-1$, that begins with a special step, and with two
other special steps. For a given $k$, the number of such walks is
${mk-1 \choose(m-1)k-2,k-1,2}=\frac{k [(m-1)k-1] }{2} {mk-1 \choose k } $.
Moreover, as before, for each outgoing edge, we have to add a black polygon in
the sequence $\mathfrak{a}$, so that the multiplicative contribution of a vertex of type {\bf 3.1} is
finally:
\begin{eqnarray*}&&
[\zc(m-1)]^{out(v)} 
\sum_{k\in D}\frac{k[(m-1)k-1]}{2}{mk-1 \choose k}
\zc^{k-1}\Tplanarc^{(m-1)k-2}\\ &=& 
\frac{[\zc(m-1)]^{out(v)}}{(m-1)\zc\Tplanarc^2} \frac{\gamma_{m,D}}{2}
=
{[\zc(m-1)]^{out(v)-1}} \frac{\gamma_{m,D}}{2\Tplanarc^2}
\end{eqnarray*}

\subsubsection{vertices of type 3.2}
Such a vertex can correspond to a white or black star.

If it is decorated by a white star, it corresponds to a walk of length $mk\in
mD$, with $(m-1)k-1$ steps $-1$, $k-2$ steps $m-1$, beginning with a special
step, and with two other special steps. The number of such walks being 
${mk-1 \choose(m-1)k-1,k-2,2}=\frac{k [k-1] }{2} {mk-1 \choose k } $,
the corresponding contribution
is:
\begin{eqnarray*}&&
[\zc(m-1)]^{out(v)} 
\sum_{k\in D}\frac{k [k-1]}{2}{mk-1 \choose k}
\zc^{k-2}\Tplanarc^{(m-1)k-1}\\ &=& 
[\zc(m-1)]^{out(v)}\left(
\frac{1}{\zc^2\Tplanarc} \frac{\gamma_{m,D}-(m-2)\beta_{m,D}}{2(m-1)^2}
\right) 
\end{eqnarray*}

In the other case, $v$ is decorated by a black star with three marked special
edges: there are $\frac{(m-1)(m-2)}{2}$ of those, so that the contribution of
the black star is $\frac{(m-1)(m-2)}{2}\zc$.
Now, for each \emph{ingoing} special edge of $v$, we
need to add a white elementary star with two special split-edges: the
multiplicative contribution for adding such a star is 
$\sum_{k\in D} [(m-1)k-1] {mk-1 \choose k}  \zc^{k-1}
\Tplanarc^{(m-1)k}=\frac{1}{(m-1)\zc}$. The multiplicative factor
for the second case is therefore:
\begin{eqnarray*}&&
\frac{(m-1)(m-2)}{2}\zc
\left[\frac{1}{(m-1)\zc}\right]^{3-out(v)} 
\end{eqnarray*}
Putting the two cases together, the multiplicative contribution of a vertex of
the type {\bf 3.2} is:
\begin{eqnarray*}&&
[\zc(m-1)]^{out(v)} 
\left(
\frac{1}{\zc^2\Tplanarc} \frac{\gamma_{m,D}-(m-2)\beta_{m,D}}{2(m-1)^2}
+\frac{m-2}{2(m-1)^2\zc^2}\right)\\
&=&
[\zc(m-1)]^{out(v)-2} \frac{\gamma_{m,D}}{2\Tplanarc}
\end{eqnarray*}
where we used that $\Tplanarc=\beta_{m,D}$.

\subsection{Final asymptotics}
Putting the four cases together, it finally comes that:
\begin{eqnarray*}&&
\sum_{(F,\mathfrak{a})\in D_{\mathfrak{s},\lambda,\tau}}
\zc^{(\mathfrak{s},\tau,F,\mathfrak{a},\lambda)}\\
&=&
\prod_{v:type \ 1}
\frac{\gamma_{m,D}}{2}
\prod_{v:type \ 2}
\frac{[\zc(m-1)]^{out(v)-1}}{\Tplanarc} \frac{\gamma_{m,D}}{2} \\
&& \ \ 
\prod_{v:type \ 3.1}
{[\zc(m-1)]^{out(v)-1}} \frac{\gamma_{m,D}}{2\Tplanarc^2}
\prod_{v:type \ 3.2}
[\zc(m-1)]^{out(v)-2} \frac{\gamma_{m,D}}{2\Tplanarc}\\
&=&
\left(\frac{\gamma_{m,D}}{2}\right)^{|V(\mathfrak{s})|}
[\zc(m-1)]^{out(\mathfrak{s})-v_2-v_3^{(1)}-2v_3^{(2)}}
\Tplanarc^{-v_2-2v_3^{(1)}-v_3^{(2)}}
\end{eqnarray*}
where $out(\mathfrak{s})=\sum_{v\ type\  2;3.1;3.2} out(v)$ is the total number
of special half-edges that are outgoing. Observe that $out(\mathfrak{s})$ is
also the total number of special edges (since each edge has exactly one outgoing half-edge),
i.e. $out(\mathfrak{s})=n_{\neq}$.
Moreover, since
$v_3^{(1)}=v_3^{(2)}$, we
have: $v_2+v_3^{(1)}+2v_3^{(2)}=v_2+\frac{3}{2}v_3=n_{\neq}$.

Hence the multiplicative factor corresponding to all decorations of
$\mathfrak{s},\tau,\lambda$ is: 
$$
\sum_{(F,\mathfrak{a})\in D_{\mathfrak{s},\lambda,\tau}}
\zc^\mathfrak{(\mathfrak{s},\tau,F,\mathfrak{a},\lambda)}=
\left(\frac{1}{\Tplanarc}\right)^{n_{\neq}}\left(\frac{\gamma_{m,D}}{2}\right)^{|V(\mathfrak{s})|}.
$$
That is where something great happens: the factor
$\left(\frac{1}{\Tplanarc}\right)^{n_{\neq}}$ simplifies with
$\Tplanarc^{n_{\neq}}$ in Equation~\ref{eq:devRfz}. Hence, the first term in
the singular expansion of $R_{\mathfrak{s},\lambda,\tau}$ does not depend on
the typing ! This is, with Lemma~\ref{lemma:dimension}, the main argument
leading to Theorem~\ref{thm:hypermaps}. Precisely, summing 
 Equation~\ref{eq:devRfz} over all the decorations gives:
\begin{eqnarray*}&&
R_{\mathfrak{s},\lambda,\tau}(z)\\&=&
c_{\mathfrak{s},\lambda}
\left(\frac{\gamma_{m,D}}{2}\right)^{|V(\mathfrak{s})|}
(m-1)^{\frac{k+m}{4}}m^{\frac{M-k}{2}}
\gamma_{m,D}^{\frac{M-3k}{4}}
\beta_{m,D}^{-\frac{k+M}{4}}
3^{\frac{k-M}{2}}
2^{\frac{-k-5M}{4}}
\left(1-\frac{z}{\zc}\right)^{-\frac{k+M}{4}}
[1+o(1)]
\end{eqnarray*}
where $k=6g-3$ and $|V(\mathfrak{s})|=M+1=4g-2$. This gives our main estimate:
\begin{proposition}
When $z$ tends to $\zc$, the following Puiseux expansion holds:
\begin{eqnarray}\label{eq:devRaftersum}
R_{\mathfrak{s},\lambda,\tau}(z)=
c_{\mathfrak{s},\lambda}
(m-1)^{\frac{5g-3}{2}}m^{-g}
\gamma_{m,D}^{\ \frac{g-1}{2}}
\beta_{m,D}^{\ \frac{3-5g}{2}}
3^{g}
2^{\frac{13-21g}{2}}
\left(1-\frac{z}{\zc}\right)^{-\frac{k+M}{4}}
[1+o(1)]
\end{eqnarray}
\end{proposition}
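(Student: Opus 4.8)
The plan is to derive Equation~\ref{eq:devRaftersum} by summing the single-full-scheme estimate of Lemma~\ref{lemma:devRf} over all decorations and then specialising to a dominant pair. The crucial structural observation is that in Equation~\ref{eq:devRfz} the decoration $(F,\mathfrak{a})$ enters the leading coefficient only through the prefactor $z^\mathfrak{f}\,\Tplanarc^{\,n_{\neq}}$: the singular exponent $-\frac{k+M}{4}$ and all the remaining constants depend only on $(\mathfrak{s},\lambda)$ (and on $k$, $M$), not on $(F,\mathfrak{a})$. Hence summing $R_{\mathfrak{s},\tau,\lambda}(z)=\sum_{(F,\mathfrak{a})}R_{\mathfrak{s},\tau,F,\mathfrak{a},\lambda}(z)$ reduces, at first order, to evaluating the quantity in Equation~\ref{eq:totalfactor}, namely $\sum_{(F,\mathfrak{a})}\zc^{(\mathfrak{s},\tau,F,\mathfrak{a},\lambda)}$.

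I would first exploit that this sum factorises into local contributions, one per vertex of $\mathfrak{s}$: a full scheme is built by choosing the nodal star $F_v$ at each vertex independently (subject to the split-edge types prescribed by $\tau$) together with the correction stars of $\mathfrak{a}$, which I attribute to the vertex at the outgoing end of each special half-edge. Using the four-type classification furnished by the Kirchoff law (Proposition~\ref{prop:Kirchoff}), I would compute the local factor of each type by counting the admissible elementary stars through the $m$-walk reformulation (Lemma~\ref{lemma:mwalk}), weighting each added black correction star by $(m-1)\zc$ and each added white one by $\frac{1}{(m-1)\zc}$. The routine but delicate point is that, after invoking $\Tplanarc=\beta_{m,D}$ and the definition of $\gamma_{m,D}$, every one of the four local factors collapses to the common shape $[\zc(m-1)]^{\mathrm{out}(v)-\varepsilon(v)}\frac{\gamma_{m,D}}{2\,\Tplanarc^{\,\eta(v)}}$, so that the product over vertices equals $\left(\frac{\gamma_{m,D}}{2}\right)^{|V(\mathfrak{s})|}[\zc(m-1)]^{\,\mathrm{out}(\mathfrak{s})-(v_2+v_3^{(1)}+2v_3^{(2)})}\,\Tplanarc^{-(v_2+2v_3^{(1)}+v_3^{(2)})}$.

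The decisive step is then an exact cancellation. Since $\mathrm{out}(\mathfrak{s})=n_{\neq}$ and, using $v_3^{(1)}=v_3^{(2)}$, both exponents satisfy $v_2+v_3^{(1)}+2v_3^{(2)}=v_2+2v_3^{(1)}+v_3^{(2)}=n_{\neq}$, the power of $[\zc(m-1)]$ vanishes and the surviving $\Tplanarc^{-n_{\neq}}$ cancels exactly the $\Tplanarc^{\,n_{\neq}}$ of Equation~\ref{eq:devRfz}. Consequently the leading singular term of $R_{\mathfrak{s},\tau,\lambda}$ becomes independent of the typing $\tau$, which together with Lemma~\ref{lemma:dimension} is the mechanism producing the factor $m^{2g}$ in Theorem~\ref{thm:hypermaps}. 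I expect the main obstacle to lie precisely in this per-vertex bookkeeping: one must insert the correction stars with the correct multiplicities and, in particular, treat the type~{\bf 3.2} vertices carefully, since they may carry either a white or a black nodal star and their two sub-contributions must be added \emph{before} simplifying, so that every power of $\Tplanarc$ balances.

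Finally I would specialise to a dominant pair, substituting $k=|E(\mathfrak{s})|=6g-3$ and $M=4g-3$, $|V(\mathfrak{s})|=4g-2$ into the product of the cancelled decoration factor with the constants of Equation~\ref{eq:devRfz}, and collect the resulting powers of $(m-1)$, $m$, $\gamma_{m,D}$, $\beta_{m,D}$, $3$ and $2$. This last step is purely arithmetical simplification of the exponents and yields Equation~\ref{eq:devRaftersum}, with the constant $c_{\mathfrak{s},\lambda}$ carried along unchanged.
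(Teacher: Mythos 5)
Your proposal is correct and follows essentially the same route as the paper: the same reduction to the decoration sum of Equation~\ref{eq:totalfactor}, the same vertex-local factorisation via the four-type classification from the Kirchoff law, the same cancellation mechanism driven by $out(\mathfrak{s})=n_{\neq}$ and $v_3^{(1)}=v_3^{(2)}$ making the $\Tplanarc^{\,n_{\neq}}$ factors collapse, and the same final specialisation to dominant pairs with $k=6g-3$, $M=4g-3$. The paper's proof is exactly this computation carried out explicitly for each of the four vertex types.
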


Let $d=\gcd(D)$. Then $\Tplanar(z)$ is actually a series in $z^d$. It has
therefore at least $d$ dominant singularities, which are the $\zc\xi^k$ for a
primitive $d$-th root of unity $\xi$. Now, the positivity of the
coefficients in Equation~\ref{eq:planar} easily shows that these are the only
singularities of $\Tplanar(z)$, and hence of $t(z)$. Hence, due to the
compositional nature of the series $R_{\mathfrak{f}}(z)$ (up to the prefactor
$z^\mathfrak{f}$, $R_{\mathfrak{f}}(z)$ is in fact a power series with
positive coefficients in $t(z)$), this implies that the $\zc\xi^k$ are the $d$
only dominant roots of $R_{\mathfrak{f}}(z)$ for all $\mathfrak{f}$, so that
they are also the $d$ only dominant roots of $R_{\mathfrak{s},\tau,\lambda}(z)$.

Now, $R_{\mathfrak{s},\tau,\lambda}(z)$ being an algebraic series, it is
amenable to singularity analysis, in the classical sense of \cite{FlOd}. Hence
Equation~\ref{eq:devRaftersum} and the classical transfert theorems of
\cite{FlOd} imply that the coefficient of $z^n$ in
$R_{\mathfrak{s},\tau,\lambda}(z)$ satisfies: $$
[z^n]R_{\mathfrak{s},\lambda,\tau}(z)\sim
\frac{dc_{\mathfrak{s},\lambda}}{\Gamma\left(\frac{5g-3}{2}\right)}
(m-1)^{\frac{5g-3}{2}}m^{-g}
\gamma_{m,D}^{\ \frac{g-1}{2}}
\beta_{m,D}^{\ \frac{3-5g}{2}}
3^{g}
2^{\frac{13-21g}{2}}\cdot
n^{\frac{5g-5}{2}} \zc^{-n}
$$
when $n$ goes to infinity along multiples of $d$.
Using Corollary~\ref{cor:algo} and Theorem~\ref{thm:bij}, we obtain that the
number $h^\bullet_{g,m,D}(n)$ of \emph{rooted and pointed} $m$-hypermaps of
degree set $mD$ with $n$ black faces satisfies, when $n$ tends to infinity
along multiples of $d$: $$
h^\bullet_{g,m,D}(n) \sim 
\frac{dc_g}{\Gamma\left(\frac{5g-3}{2}\right)}
(m-1)^{\frac{5g-3}{2}}m^{1-g}
\gamma_{m,D}^{\ \frac{g-1}{2}}
\beta_{m,D}^{\ \frac{3-5g}{2}}
3^{g}
2^{\frac{11-21g}{2}}\cdot
n^{\frac{5g-3}{2}} \zc^{-n}
$$
where $c_g=\frac{m^{2g}}{6g-3}
\sum_{(s,\lambda)\in\mathcal{P}_g}c_{\mathfrak{s},\lambda}$ ; observe the
factor $m^{2g}$, that comes from Lemma~\ref{lemma:dimension}.

Moreover, it follows from the remark after Corollary~\ref{cor:algo} that the
number $c^\bullet_{g,m,D}(n)$ of \emph{rooted and pointed} $m$-constellations
of degree set $mD$ with $n$ black faces satisfies, when $n$ tends to infinity
along multiples of $d$: $$
m^{2g} c^\bullet_{g,m,D}(n) \sim h^\bullet_{g,m,D}(n).
$$

\subsection{A ``de-pointing lemma''.}
The last thing that remains to do to prove Theorems~\ref{thm:main} and
~\ref{thm:hypermaps} is to relate maps which are both rooted and pointed to
maps which are only rooted. First, observe that each rooted map with $v$
vertices corresponds to exactly $v$ distinct rooted and pointed maps. Moreover,
the vertices of a $m$-hypermap correspond, except for the pointed vertex, to the
labelled vertices of its mobile.

Now, let $\mathfrak{t}_n$ be a mobile corresponding to a $m$-hypermap of degree
set $mD$ and size $n$, chosen uniformly at random. We let $n=n_1+n_2$ be the
number of black vertices of $\mathfrak{t}_n$, where $n_1$ is the number of
black vertices appearing in the superchains of $\mathfrak{t}_n$ or in the
decoration of its scheme, and $n_2$ is the number of black vertices appearing in
the ``planar parts'' that are attached on it.

First, the compositional nature of the series $R_\mathfrak{f}(z)$, which obeys a
composition schema of exponents $\left(\frac{3-5g}{2}\right)\circ\frac{1}{2}$
in the terminology of \cite{BaFlScSo}, implies that $n_1/n$ converges in
probability to $0$ (and actually more, namely that $n_1/\sqrt{n}$ converges to
a real random variable whose density is a slight modification of a Gaussian
law). Equivalently $n_2/n\rightarrow 1 $ in probability. For the same reason,
if $l_n$ denotes the total number of labelled vertices of $\mathfrak{t}_n$,
then $l'_n/l_n\rightarrow 1$ in probability, where $l'_n$ is the number of
labelled vertices present in the planar parts.

Now, conditionnally to $n_2$, those planar parts form a random forest of planar
mobiles, chosen uniformly at random among those with a total of $n_2$ black
vertices. The generating series of planar mobiles, where $z$ counts black
vertices and $u$ counts the number of labelled vertices is:
\begin{eqnarray}\label{eq:Tuz}
T(z,u) = u + \sum_{k\in D}{mk-1 \choose k }z^k T(z,u)^{(m-1)k}
\end{eqnarray}
Hence, according to the famous theorem of Drmota \cite{Drmota:RSA} concerning
the distribution of the numbers of terminal symbols in large words of context
free languages, the following convergence in probability holds:
$$
\frac{l'_n}{n_2} \rightarrow \frac{T'_u(\zc,1)}{\zc T'_z(\zc,1)}
$$
when $n_2\rightarrow \infty$.
Now, it easy to obtain from Equation~\ref{eq:Tuz} that $T'_u(\zc,1)=1$, so
that: $$
\frac{T'_u(\zc,1)}{\zc T'_z(\zc,1)} = 
\frac
{1}
{\sum_{k\in D} k{mk-1 \choose k} {\zc}^k {\Tplanarc}^{(m-1)k}}
=\frac{m-1}{\beta_{m,D}}.
$$
Hence we have: $\frac{l_n}{n}\rightarrow \frac{m-1}{\beta_{m,D}}$ in
probability. This gives:
\begin{lemma}
The numbers of rooted and pointed, and rooted only $m$-hypermaps or
$m$-constellations are related by the following asymptotic relations, when $n$
tends to infinity along multiples of $d$:
$$
h^\bullet_{g,m,D}(n) \sim \frac{(m-1)n}{\beta_{m,D}} 
h_{g,m,D}(n)
\ \ \ \ ; \ \ \ \
c^\bullet_{g,m,D}(n) \sim \frac{(m-1)n}{\beta_{m,D}} 
c_{g,m,D}(n).
$$
\end{lemma}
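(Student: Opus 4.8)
The plan is to convert the concentration result $l_n/n \to \frac{m-1}{\beta_{m,D}}$ just established into a statement about the counting sequences, using the elementary dictionary between rooting and pointing. First I would record the \emph{de-pointing identity}: since a rooted map with $v$ vertices gives rise to exactly $v$ rooted-and-pointed maps, assigning to each rooted-and-pointed map the weight $1/v$ and summing recovers the number of rooted maps. Writing $\mathbb{E}$ for expectation under the uniform law on rooted-and-pointed $m$-hypermaps of degree set $mD$ and size $n$, this reads
$$
h_{g,m,D}(n) = h^\bullet_{g,m,D}(n)\,\mathbb{E}[1/v], \qquad \frac{h^\bullet_{g,m,D}(n)}{h_{g,m,D}(n)} = \frac{1}{\mathbb{E}[1/v]}.
$$
By Theorem~\ref{thm:bij} a uniform rooted-and-pointed $m$-hypermap corresponds to the uniform mobile $\mathfrak{t}_n$, whose number of labelled vertices is $v-1$; hence $v = l_n + 1$, and it suffices to prove $n\,\mathbb{E}[1/(l_n+1)] \to \frac{\beta_{m,D}}{m-1}$.

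Next I would reduce this to the convergence already in hand. Since $l_n/n \to \frac{m-1}{\beta_{m,D}}$ in probability, the (positive, bounded-below) random variable $n/(l_n+1)$ converges in probability to $\frac{\beta_{m,D}}{m-1}$; the target is the convergence of its \emph{expectation}, i.e. $\mathbb{E}[n/(l_n+1)]\to \frac{\beta_{m,D}}{m-1}$. This is where the only genuine analytic input enters: I must rule out a contribution to $\mathbb{E}[n/(l_n+1)]$ from the rare event that $l_n$ is abnormally small, that is, establish uniform integrability of $n/(l_n+1)$. This is precisely where the \emph{strength} of the concentration matters. The composition schema of exponents $\left(\frac{3-5g}{2}\right)\circ\frac{1}{2}$ obeyed by the series $R_\mathfrak{f}(z)$, together with Drmota's theorem \cite{Drmota:RSA} applied to the forest of planar mobiles carried by the labelled corners, yields not merely $l_n/n\to \frac{m-1}{\beta_{m,D}}$ but Gaussian-type fluctuations of the defect $n_1/\sqrt n$ and a tightly concentrated $l'_n$. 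From these I would extract a lower-tail bound $\mathbb{P}(l_n \le \varepsilon n) = o(1/n)$ for a fixed small $\varepsilon>0$ (a large deviation for an additive functional of a simply-generated forest). On the event $\{l_n > \varepsilon n\}$ the integrand is bounded by $1/\varepsilon$ and converges in probability, while the complementary event contributes at most $n\,\mathbbm{1}$ times a probability that is $o(1/n)$; bounded convergence then gives $\mathbb{E}[n/(l_n+1)]\to \frac{\beta_{m,D}}{m-1}$, and hence $h^\bullet_{g,m,D}(n) \sim \frac{(m-1)n}{\beta_{m,D}}\,h_{g,m,D}(n)$. This tail control is the main obstacle; everything else is bookkeeping.

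Finally, the constellation case is handled verbatim. Replacing $R_g$ by the constellation series $R^{cons}_g$ of Remark~\ref{rem:algobipartite} leaves the ``planar parts'' — which dominate both the black-vertex count and the labelled-vertex count — structurally unchanged, so the same concentration $l_n/n \to \frac{m-1}{\beta_{m,D}}$ and the same de-pointing identity give $c^\bullet_{g,m,D}(n) \sim \frac{(m-1)n}{\beta_{m,D}}\,c_{g,m,D}(n)$, completing the proof.
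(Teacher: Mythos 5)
Your proposal follows the same skeleton as the paper's own argument: the $v$-to-one correspondence between rooted and rooted-and-pointed maps, the identification $v=l_n+1$ via the bijection of Theorem~\ref{thm:bij}, and the concentration $l_n/n\to(m-1)/\beta_{m,D}$ obtained from the composition schema and Drmota's theorem; your remark that the constellation case goes through verbatim is also correct, since the attached planar parts are identical in both settings. Where you genuinely depart from the paper is at the final inferential step. The paper deduces the lemma directly from the convergence in probability of $l_n/n$ under the uniform measure on \emph{pointed} objects, whereas you correctly observe that this alone is not sufficient: since $h^\bullet_{g,m,D}(n)/h_{g,m,D}(n)=1/\mathbb{E}[1/v]$ with the expectation taken under the pointed (hence $v$-size-biased) measure, one must exclude a contribution from maps with abnormally few vertices --- such maps are nearly invisible under the pointed measure but could a priori form a positive fraction of rooted maps --- i.e.\ one needs uniform integrability of $n/(l_n+1)$, for instance via a lower-tail bound $\mathbb{P}(l_n\le\varepsilon n)=o(1/n)$. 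This is exactly the right missing ingredient, and your reduction of the lemma to it is clean. Two caveats, however. First, this tail bound does \emph{not} follow from the statements you invoke: Drmota's theorem and the limit law of $n_1/\sqrt{n}$ are convergence-in-distribution results and by themselves give no $o(1/n)$ tail; one must instead derive exponential bounds, e.g.\ by perturbing the variable $u$ in the bivariate algebraic series $T(z,u)$ (a standard exponential-moment argument for simply generated forests), together with an analogous bound showing that the non-planar part $n_1$ is not macroscopic. Second, this estimate is left as a sketch in your write-up, so your proof is not complete as it stands --- though it is, at this one point, more careful than the paper itself, which passes over the uniform-integrability issue in silence.
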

This last result  completes the proof of Theorems~\ref{thm:main} 
and~\ref{thm:hypermaps}, up to setting
$$
t_g = \frac{c_g3^g2^{7-11g}}{(6g-3)\Gamma\left(\frac{5g-3}{2}\right)}.
$$
The last thing to do is to check that $t_g$ is indeed the same constant as in
\cite{BeCa}: this will be done in the next and last subsection, where we examine
some corollaries of the two theorems.

\subsection{The case $D=\{k\}$.}

In this subsection, we examine the case $D=\{k\}$.
In this case, we have:
$$\displaystyle
[(m-1)k-1] {mk-1 \choose k} t_c^{k}=1
$$
which gives:
\begin{eqnarray*}
\beta_{m,k} &=& \frac{(m-1)k}{(m-1)k-1} \\
\gamma_{m,k} &=& (m-1) k
\end{eqnarray*}
We obtain the following:
\begin{corollary}
Let $m\geq 2$ and $k\geq 2$ be integers. Then the number $c_{g,m,k}(n)$ of
rooted $m$-constellations of genus $g$ and size $n$, and whose all white faces
have degree $mk$ satisfies, when $n$ tends to infinity along multiples of $k$:
\begin{eqnarray*}
c_{g,m,k}(n) \sim
t_g \frac{k}{2} \left(
\frac{\sqrt{2}\sqrt{m-1}[(m-1)k-1]^{\frac{5}{2}}}{m k^2}\right)^{g-1}
n^{\frac{5(g-1)}{2}} {(z_{m,k}^{(c)})}^{-n}
\end{eqnarray*}
where:
$\displaystyle\ \ 
z_{m,k}^{(c)} =
\left[\frac{(m-1)k}{(m-1)k-1}\right]^{1-m}\left[[(m-1)k-1]{mk-1
\choose k}\right]^{-\frac{1}{k}} .$
\end{corollary}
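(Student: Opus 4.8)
The plan is to derive this corollary as a direct specialization of Theorem~\ref{thm:main} to the case where the degree set is the singleton $D=\{k\}$. Since $k\geq 2$, the hypotheses of Theorem~\ref{thm:main} are met (in particular $D$ is not reduced to $\{1\}$ when $m=2$), so the general asymptotic formula applies verbatim, and it remains only to evaluate the constants $\beta_{m,k}$, $\gamma_{m,k}$, and $z_{m,k}^{(c)}$ in this special case and to substitute them.

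First I would use the defining equation of $\tc$, which for $D=\{k\}$ reads $[(m-1)k-1]{mk-1 \choose k}\tc^{\,k}=1$, equivalently ${mk-1 \choose k}\tc^{\,k}=\frac{1}{(m-1)k-1}$. Since $\beta_{m,D}$ and $\gamma_{m,D}$ are single-term sums in this case, substituting this relation directly yields $\beta_{m,k}=\frac{(m-1)k}{(m-1)k-1}$ and $\gamma_{m,k}=(m-1)k$, the values displayed just above the corollary. Solving the same defining equation for $\tc$ gives $\tc=\left[[(m-1)k-1]{mk-1 \choose k}\right]^{-1/k}$, and inserting this together with $\beta_{m,k}$ into $z_{m,k}^{(c)}=\tc\,[\beta_{m,k}]^{1-m}$ produces the stated closed form for $z_{m,k}^{(c)}$.

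The last step is to simplify the prefactor of the asymptotic formula of Theorem~\ref{thm:main}. I would substitute $\gamma_{m,k}=(m-1)k$ and $\beta_{m,k}=\frac{(m-1)k}{(m-1)k-1}$ into the bracketed factor $\frac{(m-1)^{5/2}\sqrt{2\gamma_{m,D}}}{m\beta_{m,D}^{5/2}}$ and collect the powers of $(m-1)$ and of $k$: the numerator becomes $(m-1)^{5/2}\sqrt{2(m-1)k}=\sqrt{2}\,(m-1)^{3}\sqrt{k}$, while the denominator becomes $m\,[(m-1)k]^{5/2}[(m-1)k-1]^{-5/2}$. After cancellation the factor collapses to $\frac{\sqrt{2}\sqrt{m-1}\,[(m-1)k-1]^{5/2}}{m k^{2}}$, and setting $\gcd(D)=k$ in the overall formula (so that $n$ runs over multiples of $k$) yields exactly the claimed asymptotics.

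There is no genuine obstacle here: the entire argument is bookkeeping. The only point demanding minimal care is tracking the half-integer exponents so that $(m-1)^{5/2}\cdot(m-1)^{1/2}=(m-1)^{3}$ and $k^{1/2}/k^{5/2}=k^{-2}$ combine correctly. All the analytic and combinatorial content lives in Theorem~\ref{thm:main}; this corollary merely records the clean closed forms that arise when a single face degree is allowed.
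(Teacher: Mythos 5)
Your proposal is correct and follows exactly the paper's own route: the paper likewise obtains this corollary by specializing Theorem~\ref{thm:main} to $D=\{k\}$, using the relation $[(m-1)k-1]\binom{mk-1}{k}\,\tc^{\,k}=1$ to deduce $\beta_{m,k}=\frac{(m-1)k}{(m-1)k-1}$ and $\gamma_{m,k}=(m-1)k$, and then substituting into the general asymptotic formula. Your algebraic simplification of the bracketed factor and of $z_{m,k}^{(c)}$ matches the paper's computation.
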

For $m=2$, we  obtain the asymptotic number of bipartite $2k$-angulations with
$n$ edges:
\begin{eqnarray*}
c_{g,2,k}(n) \sim
t_g \frac{k}{2} \left[\frac{1}{\sqrt{2}} \frac{(k-1)^{5/2}}{k^2}\right]^{g-1}
n^{\frac{5(g-1)}{2}} {z^{(c)}_{2,k}}^{-n}
\end{eqnarray*}
If furthermore $k=2$, we recover the asymptotic number of
bipartite quadrangulations with $2n$ edges (which is also the number of maps
with $n$ edges, thanks to the classical bijection of Tutte), 
in accordance with \cite{BeCa0,ChMaSc}):
\begin{corollary}
The number $m_n^{(g)}$ of rooted maps on $\mathcal{S}_g$ with $n$ edges
satisfies: $$
m_n^{(g)} \sim t_g n^{\frac{5(g-1)}{2}} 12^n
$$
\end{corollary}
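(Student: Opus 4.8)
The plan is to obtain this statement as the specialization $m=k=2$ of the $D=\{k\}$ corollary, combined with two classical correspondences that turn arbitrary rooted maps into $2$-constellations. First I would invoke Tutte's bijection, under which rooted maps of genus $g$ with $n$ edges are in bijection with rooted bipartite quadrangulations of genus $g$ with $n$ faces; since a quadrangulation with $n$ faces has $2n$ edges, these are bipartite quadrangulations with $2n$ edges. Next, by the black-face contraction recalled in Section~\ref{sec:defandresults}, a bipartite quadrangulation is precisely a $2$-constellation of degree set $2\{2\}$, and under this contraction the edges of the quadrangulation are exactly the black faces of the constellation. Chaining the two correspondences, a rooted map with $n$ edges corresponds bijectively to a rooted $2$-constellation of genus $g$, degree set $2\{2\}$ and size $2n$, so that $m_n^{(g)}=c_{g,2,2}(2n)$.

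It then remains to substitute into the $D=\{k\}$ corollary with $m=k=2$ and argument $2n$. I would first record the relevant constants: the defining equation $[(m-1)k-1]{mk-1\choose k}\,t_c^k=1$ gives $t_c^2=\frac13$, the corollary's formulas yield $\beta_{2,2}=2$ and $\gamma_{2,2}=2$, and
\[
z_{2,2}^{(c)}=\left[\frac{2}{1}\right]^{-1}\left[\,3\,\right]^{-1/2}=\frac{1}{2\sqrt3}.
\]
Consequently $\left(z_{2,2}^{(c)}\right)^{-2n}=(2\sqrt3)^{2n}=12^{\,n}$, which already produces the exponential growth rate announced in the statement. The prefactor $\frac{k}{2}$ equals $1$, and replacing $n$ by $2n$ turns the polynomial factor $n^{5(g-1)/2}$ into $(2n)^{5(g-1)/2}=2^{5(g-1)/2}\,n^{5(g-1)/2}$.

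The one point to watch is the cancellation of the genus-dependent constant. Evaluated at $m=k=2$, the bracketed base of the $D=\{k\}$ corollary equals $\frac{\sqrt2}{8}=2^{-5/2}$, hence contributes $2^{-5(g-1)/2}$; this cancels exactly against the factor $2^{5(g-1)/2}$ coming from $(2n)^{5(g-1)/2}$. After this cancellation the surviving estimate is precisely $t_g\,n^{5(g-1)/2}\,12^{\,n}$, which is the claimed formula. Finally, Bender and Canfield proved in \cite{BeCa0,BeCa} that the number of rooted maps of genus $g$ with $n$ edges is asymptotic to $t_g\,n^{5(g-1)/2}\,12^{\,n}$ for their constant $t_g$; since the leading constant of a polynomial-times-exponential asymptotic is unique, this computation identifies the constant $t_g$ defined in the previous subsection (through the sum over dominant pairs) with the one of \cite{BeCa0}, as announced.

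I expect the only real difficulty to be bookkeeping rather than analysis: keeping the two size conventions consistent (the factor $2$ between the edges of a map and the edges of its associated quadrangulation, i.e. the black faces of the corresponding constellation) and checking that the substitution $n\mapsto 2n$ respects the requirement that the argument run through multiples of $d=\gcd(D)=2$. All the genuine work has already been carried out in the singularity analysis of $R_{\mathfrak{s},\lambda,\tau}$; the purpose of this final specialization is simply to pin down the normalizing constant.
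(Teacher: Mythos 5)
Your proposal is correct and follows exactly the paper's route: Tutte's bijection (maps with $n$ edges $\leftrightarrow$ bipartite quadrangulations with $2n$ edges, i.e.\ $2$-constellations of degree set $2\{2\}$ and size $2n$), specialization of the $D=\{k\}$ corollary at $m=k=2$ with argument $2n$, the cancellation $(2^{-5/2})^{g-1}\cdot 2^{5(g-1)/2}=1$ together with $(2\sqrt{3})^{2n}=12^n$, and finally the comparison with Bender--Canfield to identify the constant $t_g$. The paper's own proof is just a terser version of this same computation, so there is nothing to add.
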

In particular, this proves that our constant $t_g$ is indeed the same as the one
introduced in \cite{BeCa0}.
Our last corollary concerns the number of all $m$-constellations of genus $g$
(without degree restriction).
The following lemma is classical and reduces the study of all $m$-constellations (without degree restriction) to the
study of degree restricted $m+1$-constellations. See the proof of Corollary 2.4 in \cite{MBM-Sc}.
\begin{lemma}
There is a bijection between rooted $m$-constellations with $n$ black faces and 
rooted $m+1$-constellations  with $n$ black faces where all white faces have
degree $m+1$.
\end{lemma}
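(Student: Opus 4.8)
The plan is to exhibit a direct, genus-preserving surgery that introduces a single new colour $m+1$, in the spirit of the proof of Corollary 2.4 in \cite{MBM-Sc}. Fix a rooted $m$-constellation $\mathfrak{m}$ of genus $g$ with $n$ black faces. First I would equip the vertices of $\mathfrak{m}$ with their labelling in $\{1,\dots,m\}$ given by condition \textbf{(iii)} of Definition~\ref{def:constellation}; this labelling is unique up to a global shift in $\mathbb{Z}/m\mathbb{Z}$, and I normalise it with the help of the root (for instance by requiring the origin of the root edge to carry label $1$), which removes the ambiguity. In each black face call \emph{special} the unique edge joining its vertex of label $m$ to its vertex of label $1$, i.e. the edge along which the clockwise labels wrap around. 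There are exactly $n$ special edges, and since the labels vary by one (mod $m$) at each step around any face, a white face of degree $mk$ is incident to exactly $k$ special edges, spaced $m$ edges apart along its boundary.

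The forward map is then purely local. Inside each white face $f$ I add a new vertex $v_f$, declared of colour $m+1$, and I reroute each of the $k$ special edges bordering $f$ through $v_f$: the special edge, which runs from a vertex of label $m$ to a vertex of label $1$, is replaced by the two-edge path that goes from the label-$m$ vertex to $v_f$ and then from $v_f$ to the label-$1$ vertex. Each black $m$-gon thereby becomes an $(m+1)$-gon whose clockwise labels read $1,\dots,m,m+1$, so the original labelling together with the value $m+1$ on every $v_f$ is a valid $(m+1)$-labelling and the output is an $(m+1)$-constellation; meanwhile each white face of degree $mk$ is cut by $v_f$ into $k$ faces, each bounded by $m-1$ old edges and two new edges, hence of degree exactly $m+1$. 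The root of $\mathfrak{m}$ is carried to a canonical root of the image (with the convention that a rerouted root edge is sent to its first half). A short Euler check confirms the genus is unchanged: the number of edges passes from $nm$ to $n(m+1)$, the number of vertices increases by the number $w$ of white faces, and the number of faces passes from $n+w$ to $2n$, leaving $V-E+F$ invariant.

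The inverse map absorbs the new colour: in an $(m+1)$-constellation all of whose white faces have degree $m+1$, I delete every colour-$(m+1)$ vertex $v$ and, in each black face incident to $v$ (where $v$ necessarily sits between its label-$m$ and label-$1$ vertices), fuse the two edges formerly meeting at $v$ into a single edge. I would check by a symmetric local inspection, and the same Euler computation run backwards, that this returns an $m$-constellation of genus $g$ and is a two-sided inverse, with matching rooting conventions. The main point requiring care — the real content of the argument — is exactly this well-definedness of the inverse: one must verify that in \emph{every} $(m+1)$-constellation with white faces of degree $m+1$ the colour-$(m+1)$ vertices are arranged so that their absorption yields faces of degree a multiple of $m$ and neither changes the genus nor disconnects the surface. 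A clean way to make all of this transparent is to phrase the correspondence through the transitive factorisation model, in which a rooted $m$-constellation with $n$ black faces is the datum of a tuple $(\sigma_1,\dots,\sigma_m)$ in the symmetric group $\mathfrak{S}_n$ and the map is simply $(\sigma_1,\dots,\sigma_m)\mapsto(\sigma_1,\dots,\sigma_m,(\sigma_1\cdots\sigma_m)^{-1})$; there canonicity is automatic, the white-face condition becomes the single identity $\sigma_1\cdots\sigma_{m+1}=\mathrm{id}$, bijectivity is immediate, and the equality of genera reduces to a one-line count of cycles.
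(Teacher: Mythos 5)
Your proposal is correct, but note that the paper does not actually prove this lemma: it records it as classical and points to the proof of Corollary~2.4 in \cite{MBM-Sc}, where the correspondence comes from the encoding of constellations by tuples of permutations --- essentially the factorisation argument you sketch in your closing remark, in which one appends the factor $(\sigma_1\cdots\sigma_m)^{-1}$ and the degree constraint becomes $\sigma_1\cdots\sigma_{m+1}=\mathrm{id}$. The bulk of your argument, the rerouting surgery, is therefore a genuinely different, self-contained route, and it does go through: the labelling of condition \textbf{(iii)} of Definition~\ref{def:constellation} is unique up to a global shift (each edge lies on exactly one black face, so increments are forced, and connectivity propagates the normalisation), a white face of degree $mk$ carries exactly $k$ special edges, the rerouted black faces read $1,\dots,m+1$ clockwise, and your Euler count is right. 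Its main virtue is that it makes \emph{genus preservation} explicit, which is exactly what the paper needs when it applies the lemma genus by genus in the final corollary, whereas in the factorisation model the genus statement is slightly less transparent (it sits in the cycle count). The checks you defer on the inverse do hold, the key point being that in an $(m+1)$-constellation whose white faces have degree $m+1$, every black \emph{and} every white face is incident to exactly one colour-$(m+1)$ vertex, and exactly once (each label occurs once around such a face); hence the faces around a colour-$(m+1)$ vertex of degree $2j$ form a pie that merges into a single disk of degree $jm$, so the result is again a map on the same surface. Two details deserve care: the rooting convention (with your normalisation the root runs from label $1$ to label $2$, so it is never special and transfers untouched, making the two constructions root-compatible), and, in the factorisation variant, the assertion that a rooted constellation ``is'' a tuple --- it is a transitive tuple taken up to simultaneous conjugation, or equivalently with the canonical labelling of black faces induced by the root; this identification is standard but not free of content.
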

This implies
\begin{corollary}
The number of all rooted $m$-constellations with $n$ black faces on a surface of
genus $g$ is asymptotically equivalent to:
\begin{eqnarray*}
\frac{t_g}{2} \left(
\frac{\sqrt{2m}(m-1)^{5/2}}{m+1}\right)^{g-1}
n^{\frac{5(g-1)}{2}} \left(\frac{m^{m+1}}{(m-1)^{m-1}}\right)^{n}.
\end{eqnarray*}
\end{corollary}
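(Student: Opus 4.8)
The plan is to use the bijection of the preceding lemma to transport the enumeration of \emph{all} $m$-constellations to the enumeration of degree-restricted $(m+1)$-constellations, and then to invoke Theorem~\ref{thm:main} after a direct evaluation of its four constants. Concretely, that lemma identifies rooted $m$-constellations with $n$ black faces with rooted $(m+1)$-constellations with $n$ black faces all of whose white faces have degree $m+1$; in the language of Theorem~\ref{thm:main} this is precisely the family of $(m+1)$-constellations of genus $g$ with degree set $(m+1)\{1\}$, i.e.\ the specialization obtained by replacing the constellation parameter $m$ by $m+1$ and taking $D=\{1\}$. Since $m\geq2$ forces $m+1\geq3$, the excluded case of the standing hypotheses (namely constellation parameter $2$ together with $D=\{1\}$) does not arise, so Theorem~\ref{thm:main} applies; moreover $\gcd(\{1\})=1$, so the resulting asymptotics hold along all integers $n$, as required.

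It then remains to specialize the constants to $m\leftarrow m+1$ and $D=\{1\}$, keeping only the single term $k=1$ and reading every generic ``$m-1$'' as $(m+1)-1=m$. First I would solve the defining equation for $t^{(c)}_{m+1,\{1\}}$: it reads $(m-1)\binom{m}{1}\,t=1$, hence $t^{(c)}_{m+1,\{1\}}=\frac{1}{m(m-1)}$. Substituting this into the definitions of $\beta$ and $\gamma$ gives the clean values
\begin{eqnarray*}
\beta_{m+1,\{1\}} &=& \frac{m}{m-1}, \qquad \gamma_{m+1,\{1\}} = m.
\end{eqnarray*}

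Finally I would feed these into the exponential rate and the prefactor. For the rate, $z^{(c)}_{m+1,\{1\}}=t\,\beta^{\,1-(m+1)}=\frac{1}{m(m-1)}\bigl(\frac{m}{m-1}\bigr)^{-m}=\frac{(m-1)^{m-1}}{m^{m+1}}$, whose inverse is exactly the base $\frac{m^{m+1}}{(m-1)^{m-1}}$ appearing in the statement. For the prefactor, substituting $\beta^{5/2}=\frac{m^{5/2}}{(m-1)^{5/2}}$ and $\sqrt{2\gamma}=\sqrt{2m}$ into $\frac{((m+1)-1)^{5/2}\sqrt{2\gamma}}{(m+1)\beta^{5/2}}$, the factors $m^{5/2}$ cancel and one is left with $\frac{\sqrt{2m}\,(m-1)^{5/2}}{m+1}$; together with $\frac{\gcd(D)}{2}=\frac12$ this reproduces the claimed constant, and the power $n^{\frac{5(g-1)}{2}}$ is unchanged under the substitution. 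I do not expect any genuine obstacle: the argument is a reduction followed by a bookkeeping computation, and the only point requiring care is to be systematic about the shift $m\mapsto m+1$ so that each generic ``$m-1$'' is correctly interpreted as ``$m$'' (and the ambient parameter in the prefactor $\frac{1}{m+1}$, in the rate exponent $1-(m+1)$, etc.\ is the \emph{new} parameter $m+1$).
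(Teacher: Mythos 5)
Your proposal is correct and follows exactly the paper's own route: the reduction lemma converts all $m$-constellations into $(m+1)$-constellations with degree set $(m+1)\{1\}$, and Theorem~\ref{thm:main} is then applied with the shifted parameters, yielding $t^{(c)}=\frac{1}{m(m-1)}$, $\beta=\frac{m}{m-1}$, $\gamma=m$, and hence the stated rate and prefactor. Your explicit checks that $m+1\geq 3$ avoids the excluded case and that $\gcd(\{1\})=1$ removes the divisibility restriction are precisely the bookkeeping the paper leaves implicit.
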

\vspace{0mm}
\subsubsection*{\bf Acknowledgements.}
The author thanks Gilles Schaeffer for his help and support. Thanks also to
Mireille Bousquet-M\'elou for stimulating discussions.

%\nocite{*}
\bibliographystyle{alpha}
% use the following instead if you encounter problems 
%\bibliographystyle{alpha}
\bibliography{Ch07}
\label{sec:biblio}
	
\end{document}